\pdfminorversion=4
\documentclass[10pt]{amsart}

\usepackage{amsmath} \usepackage{amssymb, amscd,cancel, graphicx,soul}
	\usepackage{pinlabel,mathtools}

\usepackage{hyperref}
\hypersetup{
    colorlinks=true, 
    linktoc=all,     
    linkcolor=black,  
}

\usepackage{enumerate}

\usepackage{cleveref}
\usepackage{appendix}
\usepackage{tikz-cd}

\usepackage{mathdots}
\headheight=7pt         \topmargin=14pt
\textheight=574pt       \textwidth=445pt
\oddsidemargin=18pt     \evensidemargin=18pt

\setlength{\parskip}{4pt}

\newtheorem{thm}{Theorem}[section]


\newtheorem{conj}[thm]{Conjecture}
\newtheorem{cor}[thm]{Corollary}
\newtheorem{lem}[thm]{Lemma}
\newtheorem{prop}[thm]{Proposition}

\newtheorem*{ques*}{Question}
\newtheorem{defin}[thm]{Definition}

\newtheorem*{example*}{Example}
\newtheorem*{porism*}{Porism}
\newtheorem*{scholium*}{Scholium}

\newtheorem*{thm*}{Theorem}
\newtheorem*{defin*}{Definition}
\newtheorem*{lem*}{Lemma}
\newtheorem*{prop*}{Proposition}
\newtheorem*{remark*}{Remark}

\clubpenalty=3000
\widowpenalty=3000

\def\cB{{\mathcal B}}
\def\cC{{\mathcal C}}

\def\cJ{{\mathcal J}}
\def\cM{{\mathcal M}}

\def\bC{{\mathbb C}}
\def\C{{\mathbb C}}

\def\bF{{\mathbb F}}

\def\M{{\mathrm M}}

\def\bD{{\mathbb D}}

\def\bR{{\mathbb R}}
\def\T{{\mathbb T}}
\def\bZ{{\mathbb Z}}

\def\Crit{{\mathrm{Crit}}}

\def\del{{\partial}}

\def\im{{\textup{im}}}

\def\ev{{\mathrm{ev}}}
\def\omegast{{\omega_\mathrm{std}}}

\newcommand{\into}{\hookrightarrow}

\newcommand{\ol}{\overline}


\begin{document}
\thispagestyle{empty}
\title[Polynomial inscriptions]{Polynomial inscriptions}
\author{Joshua Evan Greene} 
\address{Department of Mathematics, Boston College, USA}
\email{joshua.greene@bc.edu}
\urladdr{https://sites.google.com/bc.edu/joshua-e-greene}
\author{Andrew Lobb} 
\address{Mathematical Sciences,
	Durham University,
	UK}
\email{andrew.lobb@durham.ac.uk}
\urladdr{http://www.maths.dur.ac.uk/users/andrew.lobb/}
\thanks{JEG was supported by the National Science Foundation under Award No.~DMS-2304856 and by a Simons Fellowship.}

\begin{abstract}
We prove that for every smooth Jordan curve $\gamma \subset \bC$ and for every set $Q \subset \bC$ of six concyclic points, there exists a non-constant quadratic polynomial $p \in \bC[z]$ such that $p(Q) \subset \gamma$.
The proof relies on a theorem of Fukaya and Irie.
We also prove that if $Q$ is the union of the vertex sets of two concyclic regular $n$-gons, there exists a non-constant polynomial $p \in \bC[z]$ of degree at most $n-1$ such that $p(Q) \subset \gamma$.
The proof is based on a computation in Floer homology.
These results support a conjecture about which point sets $Q \subset \bC$ admit a polynomial inscription of a given degree into every smooth Jordan curve $\gamma$.
\end{abstract}

\maketitle

\section{Introduction.}
\label{sec:intro}
In a sequence of papers, we introduced symplectic geometry into the study of inscription problems in the plane.
Given a finite set of points $Q$, a Jordan curve $\gamma$, and a set $\M$ of continuous maps of the plane $\bR^2 \rightarrow \bR^2$, the inscription problem asks whether there exists a map $f \in \M$ such that $f(Q) \subset \gamma$.
The prototype is the well-known and unsolved {\em Square Peg Problem} due to Toeplitz \cite{toeplitz1911}: in this case, $Q$ is the vertex set of a square and $\M = \mathrm{Sim}^+(\bR^2)$ is the set of orientation-preserving similarities.
Another unsolved (although see \cite{vreziv}) variant is due to Gr\"unbaum \cite{grunbaum}: in this case, $Q$ is the vertex set of a regular hexagon and $\M = \mathrm{Aff}(\bR^2)$ is the set of affine transformations.

The relevance of symplectic geometry stems from the observation that some inscription problems can be recast as problems about Lagrangian intersections.
For example, we proved that if $Q \subset \bR^2$ is a set of four concyclic points and $\gamma \subset \bR^2$ is a smooth Jordan curve, then there exists $f \in \mathrm{Sim}^+(\bR^2)$ such that $f(Q) \subset \gamma$ \cite{greenelobb2}.  (Points are called \emph{concyclic} if they lie on a common circle, while a set of concyclic points is called \emph{cyclic}.)
The proof is based on the result of Polterovich and Viterbo that a Lagrangian embedding of the torus $\mathbb{T}^2$ in the symplectic vector space $(\bC^2,\omega_\mathrm{std})$ contains a loop of Maslov index 2
\cite{polto1991,viterbo1990}.
The result is also optimal in the sense that if $Q \subset \bR^2$ 
is not cyclic, and $\gamma$ is a circle, then there does not exist $f \in \mathrm{Sim}^+(\bR^2)$ such that $f(Q) \subset \gamma$.

\subsection*{Polynomials.}
Under the identification $\bR^2 = \bC$, the set $\mathrm{Sim}^+(\bR^2)$ is identified with the set of degree-1 polynomials with complex coefficients.
Thus, the preceding result asserts that every set $Q \subset \bC$ of four concyclic points admits a {\em linear inscription} into every smooth Jordan curve $\gamma \subset \bC$.
This perspective leads us to consider inscriptions by polynomials of higher degree.
Let $d \ge 1$, and consider the space of non-constant complex polynomials of degree $\le d$:
\[
\bC[z]_d^* := \bC[z]_d \setminus \bC {\rm .}
\]
We say that a set $Q \subset \bC$ admits a {\em degree-$d$ inscription into} $\gamma$ if there exists $p \in \bC[z]_d^*$ such that $p(Q) \subset \gamma$, and we write
\[
I_d^*(Q,\gamma) := \{p \in \C[z]_d^* : p(Q) \subset \gamma\}
\]
for the {\em moduli space} of degree-$d$ inscriptions of $Q$ into $\gamma$.
We drop the $*$ if we wish to include constant polynomials.
A parameter-counting heuristic below shows that the expected dimension of this space is 0 when $|Q| = 2(d+1)$.
In this case, $I_d^*(Q,\gamma)$ generically consists of a finite number of degree-$d$ inscriptions.
Thus, our motivating problem is:

\begin{center}
	\fbox{\begin{minipage}{29em}
			For which sets $Q \subset \bC$ of $2(d+1)$ points it is the case that $Q$ admits a degree-$d$ inscription into every smooth Jordan curve $\gamma \subset \bC$?
	\end{minipage}}
\end{center}
\Cref{conj:reducible2} below gives our conjectured answer, and we build up to it gradually.

In line with our earlier result in the case $d=1$, we first pose the following:

\begin{conj}
\label{conj:cyc_poly_peg_prob}
If $Q \subset \bC$ is a set of $2(d+1)$ concyclic points and $\gamma \subset \bC$ is a smooth Jordan curve, then there exists a degree-$d$ inscription of $Q$ in $\gamma$.
\end{conj}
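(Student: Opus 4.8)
The plan is to recast the existence of a degree-$d$ inscription as the non-emptiness of an intersection of two Lagrangian submanifolds, and then to invoke the Floer-theoretic non-displaceability phenomena that underlie the settled cases $d=1$ (Polterovich--Viterbo) and $d=2$ (Fukaya--Irie).

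Since replacing $p$ by $p\circ s$ for a similarity $s$ preserves $\bC[z]_d^*$, I would first normalize $Q=\{q_1,\dots,q_N\}$, with $N:=2(d+1)$, so that it lies on the unit circle. Let $\ev_Q\co\bC[z]_d\to\bC^N$, $\ev_Q(p)=(p(q_1),\dots,p(q_N))$; this map is injective and complex-linear, so its image $V_Q$ is a complex-linear subspace with $\dim_\bC V_Q=d+1=N/2$, and $\ev_Q$ identifies $I_d(Q,\gamma)$ with $V_Q\cap\gamma^N$, where $\gamma^N\subset\bC^N$ is the product Lagrangian torus; the real dimensions add up, $N+N=\dim_\bR\bC^N$, matching the parameter count. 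The crux is to produce a symplectic form on $\bC^N$ for which both $V_Q$ and $\gamma^N$ are Lagrangian. Because the tangent space to each factor of $\gamma^N$ is one-dimensional, hence isotropic, every form $\Omega_c:=\sum_{j=1}^N c_j\,dx_j\wedge dy_j$ with $c_j\in\bR$ vanishes on $\gamma^N$; and a polarization argument shows $\Omega_c|_{V_Q}=0$ if and only if the Hermitian form $v\mapsto\sum_j c_j|v_j|^2$ vanishes on $V_Q$, equivalently $\sum_j c_j\,\bar q_j^{\,k}q_j^{\,l}=0$ for all $0\le k,l\le d$. The concyclic hypothesis is exactly what makes this solvable: on the unit circle $\bar q_j^{\,k}q_j^{\,l}=q_j^{\,l-k}$, so the system collapses to $\sum_j c_j q_j^{\,m}=0$ for $m=-d,\dots,d$, and since $\sum_j c_j q_j^{-m}=\overline{\sum_j c_j q_j^{\,m}}$ for real $c_j$, only the $2d+1$ real equations coming from $m=0,1,\dots,d$ are independent, fewer than the $N$ unknowns $c\in\bR^N$. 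A nonzero solution $c$ thus exists, and for $Q$ in general position the (generically one-dimensional) solution space contains a $c$ with every $c_j\ne0$, so that $\Omega_c$ is symplectic and $(V_Q,\gamma^N)$ is a pair of Lagrangians of complementary dimension in $(\bC^N,\Omega_c)$ whose intersection points are exactly the degree-$d$ inscriptions of $Q$ into $\gamma$; the remaining configurations $Q$ should follow by a limiting argument.

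It then remains to show that the linear Lagrangian $V_Q$ meets the Lagrangian torus $\gamma^N$, and --- for the \emph{non-constant} inscriptions --- that it meets it away from the circle $\{(c,\dots,c):c\in\gamma\}=\ev_Q(\bC)\cap\gamma^N$ coming from the constant polynomials. After a linear symplectomorphism we may take $V_Q=\bR^N$ and $\Omega_c$ standard, so that $\gamma^N$ becomes a Lagrangian torus $T$ bounding holomorphic disks of Maslov index $2$ (of controlled but possibly unequal areas), and we must rule out displacing $T$ off of $\bR^N$. I would do this by showing that $T$ is detected by Lagrangian Floer theory: that it carries a bounding cochain or local system for which its self-Floer homology and the value of the closed--open map on the point class are nonzero, forcing $T$ to be non-displaceable from the real Lagrangian --- after completing $\bR^N$ to a closed symplectic manifold such as $(S^2)^N$, or via a filling. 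This is the role played by the theorem of Fukaya and Irie when $d=2$, and for general $d$ the conjecture should reduce to a uniform version of that input.

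The hard part is this last step. No soft argument can work, since a typical Lagrangian torus in $\bC^N$, or in $(S^2)^N$, can be Hamiltonian-displaced from a given linear Lagrangian; one must genuinely exploit the special position of $\gamma^N$ relative to $V_Q$ that the concyclicity of $Q$ forces. Moreover $V_Q\cap\gamma^N$ always contains the circle of constant inscriptions, so the Floer input has to be \emph{quantitative} --- a lower bound on the intersection, or a computation after excising or symplectically reducing out the diagonal $\ev_Q(\bC)$ --- rather than a bare non-vanishing statement; this is the same issue of separating the obvious solutions from the wanted ones that arises in our earlier papers on these problems. Since both the disk count governing the Floer homology of $\gamma^N$ and the linear-algebraic type of $V_Q$ vary with $d$ and with the configuration $Q$, I expect that performing this Floer computation uniformly in $d$, together with disposing of the degenerate configurations set aside above, is the principal obstacle --- which is why only the cases $d\le 2$ are proved here.
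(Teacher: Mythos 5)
The statement you set out to prove is a \emph{conjecture} in the paper; it is established there only for $d\le 2$ (\Cref{thm:quadratic_inscriptions}, via Fukaya--Irie) and for pinwheels (\Cref{prop:pinwheel}), and \Cref{prop:conj_implies_conj} reduces the general case to the further open \Cref{conj:maslov2}. You flag at the end that the Floer-theoretic input is the bottleneck and that only $d\le 2$ is known, so your proposal is an outline of a reduction rather than a proof. Your Lagrangian recasting differs from the paper's in a genuine way: you work in $\bC^N$ ($N=2(d+1)$) with the complex $(d+1)$-plane $V_Q=\mathrm{im}(\ev_Q)$ against the product torus $\gamma^N$, while the paper works in $\bC^{d+1}$ with the two tori $\gamma^{d+1}$ and $F^\beta_\alpha(\gamma^{d+1})$. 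The two pictures are equivalent by the graph construction, but the half-dimensional picture is what makes the paper's crucial next step natural: the clean circle $\Delta(\gamma)$ of constant inscriptions is smoothed away to produce a Lagrangian immersion of $S^1\times(\T^{d}\#\T^{d})$ whose circle fiber has Maslov index $2(d+1)$, so that non-constant inscriptions become its self-intersection points. That is how the paper turns the vague need for a ``quantitative'' Floer input (which you correctly identify as the crux) into the precise \Cref{conj:maslov2}, which Fukaya--Irie then verifies when $d=2$. Your outline does not reach the smoothing step or formulate a comparable target conjecture, which is the substantive content of \Cref{sec:dud2}.

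There is also a concrete gap in the linear algebra. The $m=0$ constraint gives $\sum_j c_j = 0$, so the $c_j$ necessarily change sign and $\Omega_c$ is \emph{never} $J_{\mathrm{std}}$-compatible. Consequently the assertion that after a linear symplectomorphism ``$\gamma^N$ becomes a Lagrangian torus $T$ bounding holomorphic disks of Maslov index $2$ (of controlled but possibly unequal areas)'' is wrong as written: in the factors with $c_j<0$ the obvious disks have negative $\Omega_c$-area, and the torus is not monotone in the usual sense. Fixing this by complex-conjugating the negative factors essentially re-derives the paper's two-tori-in-$\bC^{d+1}$ picture, so the doubling buys you nothing. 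Separately, your claim that for ``generic'' $Q$ the one-dimensional solution space contains a $c$ with all $c_j\ne 0$ undersells what is actually needed: one must show, for \emph{every} cyclically ordered concyclic configuration, that the coefficients can be taken nonzero with definite signs (positive on the $\alpha$-factors, negative on the $\beta$-factors), and this is precisely \Cref{prop:simultaneously_lagrangian}, proved via a Vandermonde-inverse computation and cross-ratio positivity. The ``limiting argument'' you propose for residual configurations could not be run in any case, since the limit of $\Omega_c$ as some $c_j\to 0$ is degenerate.
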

\noindent
Our main result confirms \Cref{conj:cyc_poly_peg_prob} in the case $d=2$:

\begin{thm}
\label{thm:quadratic_inscriptions}
If $Q \subset \bC$ is a set of six concyclic points and $\gamma \subset \bC$ is a smooth Jordan curve, then there exists a quadratic inscription of $Q$ into $\gamma$.
\end{thm}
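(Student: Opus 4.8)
The plan is to recast the existence of a non-constant quadratic inscription as the non-triviality of a Lagrangian intersection, and to obtain that non-triviality from the theorem of Fukaya and Irie. Since $p\circ L$ is a non-constant quadratic whenever $p$ is and $L$ is an orientation-preserving similarity, and similarities act transitively on circles, one may assume that $Q=\{q_1,\dots,q_6\}$ lies on the unit circle. Writing $p(z)=az^2+bz+c$ identifies $\bC[z]_2$ with $\bC^3$, and the evaluation map
\[
\Phi\colon\bC^3\to\bC^6,\qquad \Phi(a,b,c)=(p(q_1),\dots,p(q_6)),
\]
is $\bC$-linear and injective, so $V:=\Phi(\bC^3)$ is a complex $3$-plane in $\bC^6$ whose intersection with the Lagrangian torus $\gamma^6=\gamma\times\cdots\times\gamma$ is canonically identified with the moduli space $I_2(Q,\gamma)$. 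The constant polynomials contribute the ``trivial'' circle $\Delta_\gamma:=\{(c,\dots,c):c\in\gamma\}$ to this intersection, so the content of the theorem is exactly that $V\cap\gamma^6$ is strictly larger than $\Delta_\gamma$, i.e.\ that $I_2^*(Q,\gamma)\neq\varnothing$.

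The next step is to convert this into a problem about a \emph{closed} Lagrangian, and this is where the concyclic hypothesis should enter. In the $d=1$ case, concyclicity of four points produces a Lagrangian torus $\mathbb{T}^2\subset(\bC^2,\omega_{\mathrm{std}})$, and the Polterovich--Viterbo theorem on Maslov-index-$2$ loops rules out the absence of an inscription; for $d=2$ one wants instead a Lagrangian $\mathbb{T}^3$ sitting inside a symplectic manifold $M$ to which the theorem of Fukaya and Irie applies --- either $(\bC^3,\omega_{\mathrm{std}})$ after a linear change of coordinates adapted to the concyclic structure of $Q$ (so as to become symplectic up to scale), or a cotangent bundle or Liouville domain obtained from a suitable compactification. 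Concretely, I would group the six points of $Q$ into three pairs, use the relation $\bar q_j=1/q_j$ to rewrite the six real conditions $p(q_j)\in\gamma$ in pair-adapted complex coordinates, and recognize the resulting locus of candidate inscriptions as (the image of) an embedded Lagrangian torus $L\cong\mathbb{T}^3\subset M$. The circle $\Delta_\gamma$ of constant inscriptions should reappear as a distinguished family of intersection points of $L$ with a reference Lagrangian $L_0$ --- an antidiagonal, or a second torus that records the position $-b/(2a)$ of the critical point of $p$ --- in direct analogy with the circle of degenerate rectangles in the $d=1$ argument.

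Granting this reformulation, the theorem of Fukaya and Irie supplies the decisive input: $L$ is non-displaceable, equivalently its Floer homology $HF(L,L_0)$ is non-zero, so the geometric intersection $L\cap L_0$ cannot reduce to the single circle $\Delta_\gamma$. To extract a genuine non-constant quadratic from this I would first perturb $\gamma$ to a generic smooth Jordan curve, so that $V\cap\gamma^6$ becomes transverse and finite away from $\Delta_\gamma$; then compute the contribution of the trivial circle $\Delta_\gamma$ to the Floer count by an index calculation and check that it falls short of the total forced by Fukaya and Irie; and finally return from generic curves to arbitrary smooth Jordan curves by a compactness argument, using that the set of curves admitting a non-constant quadratic inscription of $Q$ is closed (a limit of inscriptions is an inscription, and non-constancy persists in the limit after rescaling the coefficients).

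The main obstacle, I expect, is the second step: producing an honest closed Lagrangian of exactly the type the theorem of Fukaya and Irie governs --- presumably monotone, or equipped with a bounding cochain, and with controlled topology --- rather than a merely coisotropic or generic submanifold, and verifying that the concyclicity of $Q$ is precisely what makes this submanifold Lagrangian. A secondary difficulty lies in the bookkeeping of the third step: one must be certain that the Floer-theoretic lower bound strictly exceeds the contribution of the trivial circle $\Delta_\gamma$, since otherwise the argument recovers only the constant inscriptions.
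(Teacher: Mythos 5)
Your reformulation of inscriptions as intersections is in the right spirit, and you correctly identify Fukaya--Irie as the key input, but the proposal diverges from a working argument in several essential ways, and the gaps you flag at the end are real.

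First, the Lagrangian that ultimately carries the argument is \emph{not} a torus $\mathbb{T}^3$. The paper works in $\bC^3$ (not $\bC^6$), with the two $3$-tori $\gamma^3$ and $F^\beta_\alpha(\gamma^3)$ where $F^\beta_\alpha = \mathrm{ev}_\beta \circ \mathrm{ev}_\alpha^{-1}$; their intersection away from the constant locus is the set of non-constant inscriptions. Concyclicity is what makes these two tori \emph{simultaneously} Lagrangian for a single (positive real diagonal) symplectic form on $\bC^3$ --- this is established by an explicit Vandermonde and cross-ratio computation (Proposition \ref{prop:simultaneously_lagrangian}), and it is precisely the step your ``pair-adapted coordinates'' gesture would need to supply. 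The constant inscriptions form a \emph{clean} circle of intersection $\Delta(\gamma)$ (Proposition \ref{prop:clean}, via B\'ezout), and one then performs a Lagrangian smoothing along this circle. The result of the surgery is a Lagrangian immersion of $S^1 \times (\mathbb{T}^2 \# \mathbb{T}^2) \approx S^1 \times \Sigma_2$ into $\bC^3$ (Lemma \ref{lem:lag_surg_is_immersion}), and the circle fiber $S^1 \times \{\mathrm{pt}\}$ has Maslov index $2n = 6$. A torus does not appear; the genus-$2$ surface factor is unavoidable and is exactly the topology the argument needs.

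Second, and more seriously, your intended use of Fukaya--Irie is not available: every compact Lagrangian in $\bC^n$ is displaceable, so \emph{any} well-defined Lagrangian Floer homology $HF(L, L_0)$ for such a pair vanishes --- there is no ``non-displaceability'' or ``$HF \neq 0$'' to exploit. The theorem of Fukaya and Irie is instead a statement about the Maslov class: a compact, relatively spin, \emph{displaceable}, aspherical Lagrangian must carry a loop of Maslov index $2$ with finite-index centralizer. The paper applies this as an \emph{obstruction to embeddedness}: if the smoothed Lagrangian $S^1 \times \Sigma_2 \hookrightarrow \bC^3$ were embedded, the finite-index condition would force the Maslov-$2$ loop to be (a power of) the circle fiber, whose Maslov index is $6$, a contradiction; hence the immersion must have a self-intersection point, which by construction is a non-constant quadratic inscription. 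There is no perturbation of $\gamma$ to genericity, no Floer count, no ``index contribution of $\Delta_\gamma$'' to compare against --- these steps are artifacts of the wrong Floer-theoretic framing.

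So the missing ideas are: (i) pass to $\bC^3$ via evaluation at the two interleaved triples, and produce a symplectic form making both tori Lagrangian (the Vandermonde/cross-ratio argument); (ii) recognize $\Delta(\gamma)$ as a clean intersection circle and smooth it, obtaining $S^1 \times \Sigma_2$, not $\mathbb{T}^3$; (iii) invoke Fukaya--Irie as a Maslov-class obstruction to embeddedness of this specific manifold, not as a non-vanishing statement for Floer homology.
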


The symplectic method applies as follows.
Suppose that $Q \subset \bC$ consists of $2(d+1)$ points.
We show that $I_d(Q,\gamma)$ is parametrized by the set of intersections between a pair of $(d+1)$-dimensional tori in $\bC^{d+1}$.
Remarkably, when the points of $Q$ are concyclic, the tori are Lagrangian in the symplectic vector space $(\bC^{d+1},\omegast)$, and the constant inscriptions form a clean loop of intersection between them.
By smoothing away this loop, we see that $I_d^*(Q,\gamma)$ is parametrized by the set of self-intersections of a Lagrangian immersion of the manifold $S^1 \times (\mathbb{T}^d \# \mathbb{T}^d)$ into $(\bC^{d+1},\omegast)$.
This immersion has the property that the circle fiber $S^1 \times \{ {\rm pt} \}$ has Maslov index $2(d+1)$, generalizing the fact that when $d=1$, it is a Lagrangian immersion of the torus $\mathbb{T}^2$ with minimum Maslov number 4.
We pose the following:

\begin{conj}
\label{conj:maslov2}
In every Lagrangian embedding $S^1 \times (\mathbb{T}^d \# \mathbb{T}^d) \into (\C^{d+1},\omega_\mathrm{std})$, $d \ge 2$, the circle fiber has Maslov index 2.
\end{conj}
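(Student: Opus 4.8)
The plan is to reduce the assertion to the existence of one $J$-holomorphic disk of a prescribed type and then to produce that disk by the method underlying \Cref{thm:quadratic_inscriptions}. Fix the given Lagrangian embedding $\iota\colon L\into(\C^{d+1},\omega_{\mathrm{std}})$ of $L:=S^1\times(\mathbb{T}^d\#\mathbb{T}^d)$, write $\mu_\iota\colon H_1(L;\bZ)\to\bZ$ for the Maslov homomorphism, and set $e:=[S^1\times\{\mathrm{pt}\}]\in H_1(L;\bZ)$; the claim is that $\mu_\iota(e)=2$. The reduction rests on two elementary observations. First, $L$ is orientable (indeed spin), so the Gauss map into the Lagrangian Grassmannian identifies $\mu_\iota\bmod 2$ with $w_1(TL)=0$; hence $\mu_\iota$ is even, so $\iota(L)$ bounds no $J$-holomorphic disk of odd Maslov index, and its minimal Maslov number is even and (by a standard displaceability argument) at most $d+2$. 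Second, if one produces a non-constant $J$-holomorphic disk $u$ bounded by $\iota(L)$ with $\mu(u)=2$ and $[\partial u]=e$ — or, a priori, with $[\partial u]=e+v$ for some $v$ in the image of $H_1(\mathbb{T}^d\#\mathbb{T}^d;\bZ)\to H_1(L;\bZ)$ satisfying $\mu_\iota(v)=0$ — then $\mu_\iota(e)=\mu(u)=2$. So everything reduces to controlling the boundary class of a suitable Maslov-$2$ disk, and the conjecture is precisely the assertion that the disks produced by the general theory exhibit this boundary behaviour.

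The engine for producing the disk is the theorem of Fukaya and Irie, the same input behind \Cref{thm:quadratic_inscriptions}. For a closed spin Lagrangian in $\C^n$ it extracts, from the vanishing of Floer homology forced by displaceability, the leading ``quantum'' contribution to the Floer differential — carried by Maslov-$2$ disks, since odd-Maslov disks are excluded by orientability — and, via Irie's de~Rham chain model for the string-topology operations that compute the relevant $A_\infty$-maps, it expresses the resulting Maslov-$2$ disk count and its boundary cycle in terms of the classical algebraic topology of the Lagrangian. In the case $d=2$, where $L=S^1\times(\mathbb{T}^2\#\mathbb{T}^2)$, this is exactly how \Cref{thm:quadratic_inscriptions} is proved. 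To pin the boundary class to $e$ in general one would exploit the product decomposition $L=S^1\times(\mathbb{T}^d\#\mathbb{T}^d)$: under the K\"unneth isomorphism the fundamental class is $[S^1]\otimes[\mathbb{T}^d\#\mathbb{T}^d]$, the disk operations respect the filtration of $H_\ast(L)$ by $S^1$-degree, and feeding the fundamental class through the Maslov-$2$ differential while tracking that degree should force the output into the $e$-summand, up to a class coming from $H_1(\mathbb{T}^d\#\mathbb{T}^d)$ that a parity or degree count against the connected-sum topology should then kill.

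The main obstacle is exactly this last step — certifying that the Maslov-$2$ disk wraps once around the $S^1$ and that none of its Maslov index is absorbed by the $\mathbb{T}^d\#\mathbb{T}^d$-directions — and the difficulty is that the features which make $d=2$ tractable degenerate for $d\ge 3$. When $d=2$ the surface $\mathbb{T}^2\#\mathbb{T}^2$, and with it $L$, is aspherical, which rigidifies the string-topology input; but for $d\ge 3$ the connected sum $\mathbb{T}^d\#\mathbb{T}^d$ is \emph{not} aspherical — its degree-$d$ homology is $\bZ$, whereas the group homology of $\pi_1=\bZ^d\ast\bZ^d$ is $\bZ^2$ in that degree — so Damian-style lifted-Floer arguments are unavailable and one must engage the full virtual apparatus of Fukaya $A_\infty$-algebras in $\C^{d+1}$, where both the combinatorics of the structure maps and the transversality of higher-dimensional bordered-disk moduli are genuinely delicate. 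A further complication is that the statement is asserted for \emph{every} Lagrangian embedding, not just monotone ones, ruling out any shortcut through monotone Floer theory. One might instead hope for a neck-stretching degeneration of $\C^{d+1}$ feeding an induction on $d$, but $\mathbb{T}^d\#\mathbb{T}^d$ carries no evident fibration over a lower-dimensional base (already $\mathbb{T}^2\#\mathbb{T}^2$ does not fiber over $S^1$, its Euler characteristic being nonzero), so the pieces of $L$ cut out by such a degeneration would be cobordisms rather than copies of the $(d-1)$-dimensional model, and it is unclear that this route is any easier.
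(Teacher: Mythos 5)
This statement is posed in the paper as a \emph{conjecture}, not a theorem: the paper proves only the case $d=2$, as a step in the proof of \Cref{thm:quadratic_inscriptions}, and explicitly acknowledges in the ``Lagrangian embeddings'' discussion that the general case (a connected sum of tori) is open. Your write-up is therefore correctly calibrated in tone — it sketches the $d=2$ argument and then explains why $d\ge 3$ resists — but it is not, and does not claim to be, a proof of the conjecture; neither does the paper supply one.

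For $d=2$ your approach matches the paper's, both resting on the Fukaya--Irie theorem, but the paper's closing step is more elementary than your string-topology formulation. The paper uses the precise form of Fukaya--Irie that delivers a loop of Maslov index $2$ whose centralizer has \emph{finite index} in $\pi_1(L)$. In $\pi_1(S^1\times\Sigma_2)=\bZ\times\pi_1(\Sigma_2)$, centralizers of nontrivial elements of the surface-group factor are infinite cyclic, hence of infinite index, so the finite-index condition forces the loop to lie in the $\bZ$-factor, i.e.\ to be a multiple of the circle fiber; orientability (even Maslov index) and $\mu=2$ then force it to be the circle fiber itself. You instead propose tracking the boundary class of the relevant Maslov-$2$ disk through a K\"unneth/$S^1$-degree filtration on string-topology operations. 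That is a reasonable heuristic but is nowhere near as rigid: the paper's group-theoretic step requires no compatibility statement between the $A_\infty$-operations and the product decomposition, only the (packaged) finite-index conclusion of Fukaya--Irie.

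Your diagnosis of the obstruction for $d\ge 3$ is correct and matches the paper's commentary. For $d\ge 3$ the connected sum is along $S^{d-1}$, which is simply connected, so $\pi_1(\mathbb{T}^d\#\mathbb{T}^d)=\bZ^d\ast\bZ^d$; group homology of a free product gives $H_d(\bZ^d\ast\bZ^d)=\bZ^2$ while $H_d(\mathbb{T}^d\#\mathbb{T}^d)=\bZ$, so $\mathbb{T}^d\#\mathbb{T}^d$ (and hence $S^1\times(\mathbb{T}^d\#\mathbb{T}^d)$) is not aspherical, and the Fukaya--Irie theorem, whose hypothesis is asphericity, no longer applies. (Your aside that this formula for $\pi_1$ fails at $d=2$ — where the connect-sum circle is not simply connected and the result is a surface group, which \emph{is} aspherical — is also correct, and is exactly why $d=2$ is special.) Your remark that the ``monotone shortcut'' is unavailable because the conjecture is stated for arbitrary Lagrangian embeddings is likewise well taken. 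The one genuinely speculative move in your proposal — forcing the boundary class into the $S^1$-summand by a degree/parity count against the connected-sum topology — is the step that would have to be invented from scratch, and you are right to flag it as the main gap; the paper offers no mechanism for it either.
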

\noindent
We discuss \Cref{conj:maslov2} in the subsection on Lagrangian embeddings.
Arguing as above, we shall establish the following:
\begin{prop}
	\label{prop:conj_implies_conj}
\Cref{conj:maslov2} implies \Cref{conj:cyc_poly_peg_prob}.
\end{prop}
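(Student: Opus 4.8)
The plan is to run the symplectic argument outlined in the introduction and then close it by invoking \Cref{conj:maslov2} in contrapositive form. Fix $d \ge 2$; for $d=1$, \Cref{conj:cyc_poly_peg_prob} is the theorem of \cite{greenelobb2} recalled above, so there is nothing to prove. Let $Q \subset \C$ be a set of $2(d+1)$ concyclic points and $\gamma \subset \C$ a smooth Jordan curve, and assume for contradiction that $I_d^*(Q,\gamma) = \varnothing$, i.e.\ that $Q$ admits no degree-$d$ inscription into $\gamma$.

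The first task is to make precise the identification of $I_d(Q,\gamma)$ with a clean Lagrangian intersection. Fixing $d+1$ of the points of $Q$ and recording a polynomial $p \in \C[z]_d$ by its values at those points puts linear coordinates on $\C^{d+1}$; the conditions that these $d+1$ values lie on $\gamma$ cut out a product torus $L_0 \cong \gamma^{\,d+1}$, while Lagrange interpolation rewrites the remaining $d+1$ incidence conditions as the requirement that an affine image of the coordinates lie on $\gamma$, cutting out a second torus $L_1$. Here one checks that: (i) smoothness of $\gamma$ makes $L_0$ and $L_1$ smooth submanifolds; (ii) concyclicity of $Q$ forces the relevant affine map to be, up to conjugation, unitary, so that both $L_0$ and $L_1$ are Lagrangian for $\omegast$; and (iii) the constant polynomials $p \equiv c$ ($c \in \gamma$) correspond exactly to the diagonal circle $C = \{(c,\dots,c) : c \in \gamma\}$, which is a clean component of $L_0 \cap L_1$, and $(L_0 \cap L_1)\setminus C$ is precisely the set of non-constant inscriptions $I_d^*(Q,\gamma)$.

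Next I would resolve the clean circle $C$ by Polterovich's Lagrangian surgery. Since $\dim C = 1$ and a neighbourhood of $C$ in the pair $(L_0,L_1)$ is modelled on $S^1$ times a pair of transverse Lagrangian planes in $\C^{d}$, the resolution deletes $C$ from each torus and glues in the trivial $S^1$-family of Lagrangian handles $S^1 \times S^{d-1} \times \R$; the result is a smooth Lagrangian diffeomorphic to $S^1 \times \bigl( (\mathbb{T}^d \setminus \mathrm{pt}) \cup_{S^{d-1}} (\mathbb{T}^d \setminus \mathrm{pt}) \bigr) = S^1 \times (\mathbb{T}^d \# \mathbb{T}^d)$, carrying a Lagrangian immersion $\iota \co S^1 \times (\mathbb{T}^d \# \mathbb{T}^d) \looparrowright (\C^{d+1},\omegast)$ whose double-point set, the surgery being supported near $C$, equals $(L_0\cap L_1)\setminus C \cong I_d^*(Q,\gamma)$. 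A short computation then pins down the Maslov index of the circle fiber $S^1 \times \{\mathrm{pt}\}$: pushed slightly off the surgery region it is Lagrangian-isotopic to $C \subset L_0$, which bounds the diagonal disc $\{(z,\dots,z) : z \in D\}$ over the Jordan domain $D$ of $\gamma$; meeting each $\C$-factor in the disc bounded by $\gamma$, this disc contributes Maslov index $2$ per factor, hence $2(d+1)$ in total. Now the assumption $I_d^*(Q,\gamma)=\varnothing$ makes $\iota$ an embedding, so \Cref{conj:maslov2} would force its circle fiber to have Maslov index $2$, contradicting $2(d+1) \ge 6$. Hence $I_d^*(Q,\gamma) \ne \varnothing$, which is \Cref{conj:cyc_poly_peg_prob} for this $d$.

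I expect the main obstacle to be the surgery step: verifying that the Polterovich resolution of the circle $C$ produces a globally smooth Lagrangian of the asserted diffeomorphism type, that it is embedded exactly away from $I_d^*(Q,\gamma)$, and --- most delicately --- that the Maslov index of its circle fiber is genuinely unchanged at $2(d+1)$ by this local operation. A secondary point requiring care is (ii): spelling out precisely why concyclicity of $Q$ is equivalent to the Lagrangian condition on $L_1$, and confirming that degenerate configurations of $Q$ (coincident points, or points for which the interpolation map fails to be invertible) do not disrupt the construction.
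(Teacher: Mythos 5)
Your proposal follows the same overall architecture as the paper: encode $I_d(Q,\gamma)$ as $\gamma^{d+1}\cap F^\beta_\alpha(\gamma^{d+1})$ via evaluation at the points of $Q$, use concyclicity to make both tori simultaneously Lagrangian, observe that the constants form a clean circle of intersection, smooth it by Lagrangian surgery, compute the Maslov index of the residual circle fiber to be $2(d+1)$, and close the argument by the contrapositive of \Cref{conj:maslov2}. The Maslov index computations agree in substance (the paper argues via Künneth and homology invariance, you via the explicit diagonal disc over the Jordan domain --- both are fine).

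However, there is a genuine gap precisely where you flag the ``main obstacle,'' and it is not merely a verification to be taken on faith. You assert that the resolution ``glues in the trivial $S^1$-family of Lagrangian handles'' and hence produces $S^1 \times (\mathbb{T}^d \# \mathbb{T}^d)$. But a priori the surgered manifold is $(\mathbb{T}^{d+1}\setminus (S^1\times D^d)) \cup_\phi (\mathbb{T}^{d+1}\setminus (S^1\times D^d))$ where $\phi$ is determined by a loop in $\mathrm{SO}_d(\bR)$ recording how the normal model twists as one goes around $\Delta(\gamma)$; if that loop is essential, one gets a twisted $S^{d-1}$-bundle over $S^1$ spliced in, not the trivial family. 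Establishing triviality is the content of \Cref{lem:lag_surg_is_immersion}, and the mechanism is not automatic: one must choose, for each $a\in\gamma$, a basis $\cB_a$ of $T_{a\cdot\mathbf 1}\gamma^n/T_{a\cdot\mathbf 1}\Delta(\gamma)$ and the $\psi^\alpha_\beta$-dual basis $\cC_a$ of the other Lagrangian quotient, and compare $\cB_a$ with $(F^\beta_\alpha)^{-1}\cC_a$. The paper makes this loop literally constant by exploiting two structural facts simultaneously --- the complex-linearity of $F^\beta_\alpha$ (so that multiplication by $\gamma'(s)$ commutes with it) and the fact that $\psi^\alpha_\beta$ is real diagonal (so duality is preserved under scaling by $\gamma'(s)$). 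Without an argument of this kind, the diffeomorphism type you claim is unsupported, and it is exactly what the Maslov computation and the application of \Cref{conj:maslov2} rely on.

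A secondary imprecision: concyclicity of $Q$ does not make $F^\beta_\alpha$ conjugate to a unitary (in the sense $DUD^{-1}$). What Proposition~\ref{prop:simultaneously_lagrangian} actually produces is a pair of positive real diagonal forms $\psi^\alpha_\beta,\psi^\beta_\alpha$ with $(F^\beta_\alpha)^*\psi^\alpha_\beta=\psi^\beta_\alpha$, i.e.\ $F^\beta_\alpha = D_1^{-1} U D_2$ with $D_1,D_2$ positive diagonal and $U$ unitary --- a ``sandwich'' by two generally distinct diagonal scalings, not a conjugation. Both tori are then Lagrangian for $\psi^\alpha_\beta$, which is linearly symplectomorphic to $\omegast$; this suffices, but the statement as you phrase it is not quite what holds, and the positivity of the diagonal entries (via the cross-ratio computation) is where concyclicity enters nontrivially.
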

	
\begin{proof}[Proof of \Cref{thm:quadratic_inscriptions}.]
Fukaya sketched a proof that if $L$ is a compact, relatively spin, displaceable, aspherical Lagrangian in a symplectic manifold $(M,\omega)$, then there exists a loop in $L$ with Maslov index 2 whose centralizer has finite index in $\pi_1(L)$ \cite[Theorem 12.1]{fukaya_langrangian_submanifolds}.
Irie put the proof onto firm footing by resolving a transversality issue left open by Fukaya  \cite{irie}.
The hypotheses of the Fukaya-Irie theorem hold for a Lagrangian embedding $S^1 \times (\mathbb{T}^2 \# \mathbb{T}^2) \approx S^1 \times \Sigma_2 \into (\bC^3,\omega_\mathrm{std})$.
A loop with finite index in $\pi_1(S^1 \times \Sigma_2)$ is homotopic to a multiple of the circle fiber $S^1 \times \{ {\rm pt} \}$, and since $S^1 \times \Sigma_2$ is orientable, every class has even Maslov index.
It follows that the loop guaranteed by the Fukaya-Irie theorem is homotopic to the circle fiber with some orientation.
This confirms the case $d=2$ of \Cref{conj:maslov2}, so the case $d=2$ of \Cref{prop:conj_implies_conj} yields the desired result.
\end{proof}

As further evidence for \Cref{conj:cyc_poly_peg_prob}, define a $2(d+1)$-{\em pinwheel} to be the union of the set of vertices of a regular $(d+1)$-gon with its image under a rotation through some angle about the center of the polygon.

\begin{thm}
\label{prop:pinwheel}
A $2(d+1)$-pinwheel admits a degree-$d$ inscription into every smooth Jordan curve.
\end{thm}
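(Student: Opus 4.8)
The plan is to run the symplectic reduction described above and then bring in the cyclic symmetry peculiar to a pinwheel. Fix a $2(d+1)$-pinwheel $Q$ and a smooth Jordan curve $\gamma$. Because $Q$ is concyclic, the discussion above identifies $I_d(Q,\gamma)$ with the intersection $L_0\cap L_1$ of two Lagrangian tori $L_0,L_1\cong\mathbb{T}^{d+1}$ in $(\bC^{d+1},\omegast)$, where $L_i$ is cut out by the condition that $p$ send the vertex set of the $i$-th of the two constituent regular $(d+1)$-gons into $\gamma$; the constant inscriptions form a clean circle $C\subset L_0\cap L_1$, and smoothing $C$ away produces a Lagrangian immersion $\iota\colon L:=S^1\times(\mathbb{T}^d\#\mathbb{T}^d)\to(\bC^{d+1},\omegast)$ whose self-intersection set is $I_d^*(Q,\gamma)$ and whose circle fiber has Maslov index $2(d+1)$. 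It therefore suffices to show that $\iota$ is \emph{not} an embedding. I would note at the outset why \Cref{conj:maslov2} cannot simply be invoked: for $d\ge3$ the connected sum $\mathbb{T}^d\#\mathbb{T}^d$ of closed aspherical $d$-manifolds is not aspherical (its fundamental group is a nontrivial free product, not a Poincar\'e duality group), so $L$ is not aspherical and the Fukaya--Irie route behind \Cref{thm:quadratic_inscriptions} is unavailable.

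I would then introduce the symmetry. After an affine change of the variable $z$, which preserves degrees, one may assume $Q$ is centered at the origin; then $Q$ is invariant under $z\mapsto\zeta z$ with $\zeta=e^{2\pi i/(d+1)}$, and precomposition $p\mapsto p(\zeta\,\cdot\,)$ acts on $\bC[z]_d=\bC^{d+1}$ by the diagonal unitary $\rho\colon(a_0,a_1,\dots,a_d)\mapsto(a_0,\zeta a_1,\dots,\zeta^d a_d)$. This $\rho$ preserves $\omegast$ and the complex structure, fixes the constant polynomials pointwise, and --- since $z\mapsto\zeta z$ cyclically permutes the vertices of each of the two $(d+1)$-gons --- preserves $L_0$, preserves $L_1$, and preserves $C$. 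Carrying out the Lagrangian surgery that smooths $C$ in a $\rho$-equivariant way, one gets $\rho(\iota(L))=\iota(L)$ and hence an induced finite-order diffeomorphism of $L$. On $H_1(L_i)=\bZ^{d+1}$ this map is the cyclic shift of the coordinates indexed by the vertices, whose fixed subgroup is the line $\bZ[C]$ spanned by the diagonal; on the complementary $\rho$-module $\bZ^d_{L_i}\cong\bZ^{d+1}/\bZ[C]\subset H_1(L)$ coming from the punctured torus on the $L_i$-side, the eigenvalues are the $(d+1)$-th roots of unity different from $1$, so there is no nonzero fixed vector. Since $\rho$ preserves $L_0$ and $L_1$ without interchanging them, it follows that $\rho$ acts on $H_1(L;\bZ)=\bZ[C]\oplus\bZ^d_{L_0}\oplus\bZ^d_{L_1}$ with fixed subgroup exactly $\bZ[C]$, the class of the circle fiber.

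To finish, suppose for contradiction that $\iota$ is an embedding. Then $\iota$ and $\rho\circ\iota$ have the same image, so the Maslov class $\mu_L\in H^1(L;\bZ)$ is $\rho$-invariant ($\rho$ being a symplectomorphism of $(\bC^{d+1},\omegast)$, hence preserving Maslov indices). By the preceding step the $\rho$-invariant part of $H^1(L;\bZ)$ is the line dual to the circle fiber (a $\bZ$-valued homomorphism vanishing on a finite-index subgroup vanishes), so $\mu_L$ is supported on the circle fiber, where it equals $2(d+1)$; hence the minimal Maslov number of the embedded Lagrangian $L\subset\bC^{d+1}$ is $2(d+1)$, which is at least $\dim L+2=d+3$ for every $d\ge1$. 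Standard Floer theory then gives $HF(L,L;\Lambda)\cong H^*(L;\Lambda)\ne0$; but $L$, being a closed Lagrangian in $\bC^{d+1}$, is Hamiltonian-displaceable, forcing $HF(L,L;\Lambda)=0$ --- a contradiction. Thus $\iota$ is not an embedding, $I_d^*(Q,\gamma)\ne\emptyset$, and $Q$ admits a degree-$d$ inscription into $\gamma$; for $d=1$ this recovers the known case of a cyclic quadrilateral.

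The step I expect to be the genuine obstacle is the $\rho$-equivariant smoothing of the clean circle $C$: one must check that the Lagrangian surgery, which a priori depends on auxiliary choices, can be performed invariantly under the normal rotation that $\rho$ induces along $C$, and that the resulting immersion still has the asserted diffeomorphism type and circle-fiber Maslov index. The other ingredients --- identifying the $\rho$-action on $H_1(L)$ with a vertex-indexed cyclic shift and computing its fixed subgroup, and the standard facts that a displaceable closed Lagrangian has vanishing Floer homology while one of minimal Maslov number at least $\dim+2$ has Floer homology isomorphic to its ordinary cohomology --- are routine.
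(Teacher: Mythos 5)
Your route is genuinely different from the paper's: you perform the Lagrangian surgery $L = S^1 \times (\mathbb{T}^d \# \mathbb{T}^d)$, exploit the cyclic symmetry to pin down the Maslov class, and then appeal to the minimal-Maslov-number obstruction for displaceable monotone Lagrangians. The paper instead works directly with the pair of cleanly intersecting tori $\gamma^n$ and $F_\theta(\gamma^n)$ (no surgery), computes their pearl Floer complex with $\bZ/n$ coefficients using a $\bZ/n$-equivariant almost complex structure, and shows that rigid pearl strips exiting a tubular neighborhood of $\Delta(\gamma)$ occur in free $\bZ/n$-orbits, forcing the differential to vanish mod $n$; this contradicts vanishing of Floer homology for the displaceable, Hamiltonian-isotopic pair.

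Unfortunately your argument has a genuine gap at the Maslov-number step, and it is not the equivariance of the surgery that is the problem. For $d \ge 2$ the minimal Maslov number of $L$ is $2$, not $2(d+1)$. The factor-circle class $e_1 = [\gamma \times \{\mathrm{pt}\}^d] \in H_1(\gamma^{d+1};\bZ)$ survives to a nonzero class in $H_1(L;\bZ)$, since removing the tube $S^1 \times D^d$ (codimension $\ge 2$ when $d\ge 2$) from $\gamma^{d+1}$ does not change $H_1$; and a loop representing $e_1$ can be pushed off the surgery region into $L_0 \cap L$, where $TL = TL_0$, so $\mu_L(e_1) = \mu_{\gamma^{d+1}}(e_1) = 2$. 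Your $\rho$-invariance deduction is correct as far as it goes --- $\mu_L$ must vanish on $(\rho-1)H_1(L)$, hence on its saturation $V'$ --- but $\bZ[C] \oplus V'$ has index $d+1$ in $H_1(L)$, so knowing $\mu_L|_{V'}=0$ and $\mu_L([C])=2(d+1)$ does \emph{not} force the image of $\mu_L$ to be $2(d+1)\bZ$. Concretely, the generator $\psi$ of the $\rho$-invariant line in $H^1(L;\bZ)$ takes value $d+1$ on $[C]$ (not $1$) and value $1$ on each $e_i$, and $\mu_L = 2\psi$ has image $2\bZ$. Thus $N_L = 2 < \dim L + 2$ for every $d \ge 2$ and the Oh spectral-sequence argument gives nothing. (For $d=1$ the tube has codimension $1$, the $e_i$ do not separately survive, the minimal Maslov number really is $4 = \dim L + 2$, and your argument does recover the rectangle case; that is the only value of $d$ for which it works.) This is precisely why the paper cannot conclude from known Floer theory of an embedded $L$ and must either invoke \Cref{conj:maslov2}, which is a statement specifically about the circle-fiber class rather than about $N_L$, or, for pinwheels, prove a bespoke equivariant pearl-homology nonvanishing result. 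A secondary issue, moot given the above but worth flagging, is that your final step also presumes $L$ is monotone; Lagrangian surgery does not in general produce a monotone Lagrangian, and you would need to verify that the neck parameter can be chosen to arrange this.
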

\noindent
The proof of \Cref{prop:pinwheel} uses Floer theory.
In this case, the parametrizing Lagrangian tori are monotone and Hamiltonian isotopic, hence their Floer homology is unobstructed.
They also admit a symmetry which we are able to exploit in order to compute their pearl Floer chain complex \cite{schmaschke}.
The case $d=1$ gives a new argument, albeit a variation on a theme, that rectangles admit linear inscriptions into smooth Jordan curves.
Using this pair of tori and more Floer theoretic methods in the case $d=1$, we were able to prove the existence of linear inscriptions of an intervals' worth of rectangles into all {\em rectifiable} Jordan curves \cite[Theorem A]{greenelobb4}.
Although we do not study polynomial inscriptions of point sets into non-smooth Jordan curves here, pinwheels stand out as a natural candidate for further study.

\subsection*{Optimality.}
By contrast with the result about linear inscriptions of cyclic quadrilaterals, \Cref{thm:quadratic_inscriptions} does \emph{not} account for all six-point sets which admit quadratic inscriptions into all smooth Jordan curves.
Indeed, fix a set $P \subset \bC$ of four concyclic points and a point $c \in \bC$, and let $Q$ consist of six elements of $c+ \sqrt{P} = \{ z \in \bC : (z - c)^2 \in P\}$.
It is easy to arrange that the points of $Q$ are not concyclic.
On the other hand, if $\gamma$ is a smooth Jordan curve, then there exists a linear inscription $az+b$ of $P$ into $\gamma$, so $p(z) = a(z - c)^2 + b$ is a quadratic inscription of $Q$ into $\gamma$.
We call such a point set $Q$ {\em cyclically reducible}.

On the other hand, not every six-point set admits a quadratic inscription into every smooth Jordan curve.
For instance, if $Q$ consists of six {\em colinear} points, then there does not exist a quadratic inscription of $Q$ into a circle.
This may be argued from B\'ezout's theorem: the image of a line containing $Q$ under a non-constant quadratic polynomial defines a degree-2 algebraic curve in the plane, which intersects the circle (another such curve) in no more than four points with multiplicity.
More generally, six points quadratically inscribe into a circle if and only if they lie on a {\em Cassini oval}\footnote{\emph{``Cette ligne est une maniere d'ellipse dans laquelle les rectangles faits par les lignes tir{\'e}es de la planette {\`a} l'un et {\`a} l'autre foyer font to{\^u}jours {\'e}gaux$\ldots$''} \cite{cassini}.}, although we do not know a versatile characterization of this property.

Complementing \Cref{thm:quadratic_inscriptions}, we pose the following:

\begin{conj}
\label{conj:reducible}
A set $Q \subset \bC$ of six points admits a quadratic inscription into every smooth Jordan curve $\gamma \subset \bC$ if and only if $Q$ is either cyclic or cyclically reducible.
\end{conj}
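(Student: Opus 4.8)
The reverse implication follows from results already established. If $Q$ is cyclic, \Cref{thm:quadratic_inscriptions} produces a quadratic inscription of $Q$ into every smooth Jordan curve. If $Q$ is cyclically reducible, say $Q$ consists of six elements of $c + \sqrt{P}$ with $P \subset \bC$ a set of four concyclic points, then for any smooth Jordan curve $\gamma$ the four-point linear inscription theorem \cite{greenelobb2} gives $a,b \in \bC$ with $a \neq 0$ and $aP + b \subset \gamma$, so that $p(z) = a(z-c)^2 + b$ is a quadratic inscription of $Q$ into $\gamma$. Thus the content of the conjecture is the forward implication: given $Q$ that is neither cyclic nor cyclically reducible, exhibit a smooth Jordan curve $\gamma$ admitting no quadratic inscription of $Q$.

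I would split the argument according to whether $Q$ lies on a Cassini oval. The elementary input is that, for non-constant $p \in \bC[z]_2$ and a circle $C$, the preimage $p^{-1}(C)$ is a circle when $\deg p = 1$, and, writing $p - w_0 = \alpha(z-r_1)(z-r_2)$ with $C = \{|w - w_0| = \rho\}$ and $\alpha \neq 0$, the Cassini oval $\{\,|z-r_1|\,|z-r_2| = \rho/|\alpha|\,\}$ when $\deg p = 2$ (and $r_1 = r_2$ recovers a circle). Hence the quadratic preimages of circles are exactly the Cassini ovals, and if $Q$ lies on no Cassini oval and is not cyclic then $Q$ admits no quadratic inscription into a circle: take $\gamma$ to be a circle. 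This subsumes the colinear case.

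The remaining --- and essential --- case is that $Q$ lies on a Cassini oval while being neither cyclic nor cyclically reducible. After an affine change of coordinates we may assume the oval is $\{\,|z^2 - w_0|=\rho\,\}$, so $p_0(z)=z^2$ carries $Q$ into the circle $C=\{|w-w_0|=\rho\}$; write $Q^{(2)} = p_0(Q)$. Since $Q$ is not cyclically reducible, $|Q^{(2)}| \geq 5$ (else a four-point concyclic set containing $Q^{(2)}$ exhibits cyclic reducibility), and since $Q$ is not cyclic there is no linear inscription of $Q$ into $C$. Post-composing $p_0$ with the rotations of $C$ about its centre produces a clean circle $\{\,e^{i\theta}(z^2-w_0)+w_0 : \theta \in \bR/2\pi\bZ\,\} \subset I_2^*(Q,C)$ --- indeed $I_2^*(Q,C)$ is a finite union of such circles --- so the moduli space has dimension above the expected value $0$ because of the symmetry of the target. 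The plan is to isotope $C$ through smooth Jordan curves $\gamma_t$ to destroy these clean circles: a Morse-type perturbation first breaks each clean circle into an even number of transverse inscriptions; one then argues that their signed count vanishes, because absent the cyclic or cyclically-reducible structure the object parametrizing the inscriptions carries no topologically forced intersection; and finally one cancels the inscriptions in pairs along a further isotopy, reaching a curve $\gamma$ with $I_2^*(Q,\gamma)=\emptyset$.

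I expect the last case to be the main obstacle, in two respects. The vanishing of the signed count should be a converse, at the level of homological or Floer-theoretic invariants, to \Cref{prop:conj_implies_conj}: one must show that the pair of tori in $\bC^3$ parametrizing $I_2(Q,\gamma)$ --- Lagrangian only when $Q$ is concyclic --- has vanishing homological intersection number precisely when $Q$ is neither cyclic nor cyclically reducible, and, in the two exceptional families, that the surviving inscriptions are exactly those protected by Fukaya--Irie \cite{fukaya_langrangian_submanifolds,irie} (for the circle fibre of $S^1 \times \Sigma_2$) and by Polterovich--Viterbo \cite{polto1991,viterbo1990} (for the reduced four-point torus in $\bC^2$). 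Second, removing the inscriptions is non-local: one may isotope the parametrizing tori only through those coming from smooth Jordan curves, so one needs a Gromov-type compactness statement for this family guaranteeing that no inscription escapes to infinity, that the relevant quadratics do not degenerate to degree one (which would reintroduce the cyclic case), and that the two members of a cancelling pair genuinely annihilate rather than merely avoid each other. A more hands-on alternative, which might bypass the Floer input, would be to write down an explicit one-parameter deformation of a circle and rule out quadratic inscriptions of $Q$ by an elimination-theoretic computation generalizing the B\'ezout argument for colinear sets; I expect this to be workable for well-chosen $Q$ but unwieldy in general.
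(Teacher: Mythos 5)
The statement you have attempted is a \emph{conjecture} in the paper: the authors do not prove \Cref{conj:reducible}, and it remains open. There is therefore no ``paper proof'' to compare against. What the paper does supply is the reverse implication (a cyclic $Q$ inscribes by \Cref{thm:quadratic_inscriptions}, a cyclically reducible $Q$ inscribes by composing a linear inscription of the four-point set $p(Q)$ with $p$) and one piece of evidence for the forward implication, namely the colinear example ruled out by B\'ezout. Your treatment of the reverse implication agrees with the paper, and your Cassini-oval observation that quadratic preimages of circles are exactly Cassini ovals is correct and strictly generalizes the paper's colinear example: if $Q$ is not cyclic and lies on no Cassini oval, taking $\gamma$ a circle already shows there is no quadratic inscription.

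The genuine gap is in the remaining subcase, where $Q$ lies on a Cassini oval but is neither cyclic nor cyclically reducible, and it is a gap that you partly acknowledge but that I want to make sharper. Your plan is to perturb the circle $C$ and cancel the surviving inscriptions, guided by the vanishing of a ``signed count'' in the non-concyclic case. But the signed homological intersection number of the two parametrizing $3$-tori in $\bR^6$ is \emph{always} zero, regardless of whether $Q$ is cyclic: any pair of closed oriented $3$-manifolds in $\bR^6$ has vanishing algebraic intersection number. So that count cannot detect the dichotomy, and cannot serve as a converse to \Cref{prop:conj_implies_conj}. What actually forces intersections in the cyclic case is the Lagrangian condition together with Floer-theoretic rigidity (Polterovich--Viterbo, Fukaya--Irie), and --- as you note --- once $Q$ is not concyclic there is no symplectic form for which both tori are Lagrangian, hence no Floer homology, no filtration by action, no Gromov compactness, and no cancellation lemma. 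You have no replacement invariant that distinguishes the cyclic/cyclically-reducible locus from its complement, and without one there is no mechanism to rule out stubborn geometric intersections that persist through every isotopy of $\gamma$ even though they are homologically invisible. Until such a non-Lagrangian obstruction theory (or an explicit elimination-theoretic construction, which you also flag as likely unwieldy) is supplied, the forward implication remains exactly as open as the conjecture itself.
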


More generally, call a set $Q \subset \bC$ of $2(d+1)$ points {\em cyclically reducible} if there exist integers $a \ge 2$ and $b \ge 1$ and a polynomial $p \in \bC[z]^*_a$ such that $ab \le d$ and $p(Q)$ is a set of $\le 2(b+1)$ concyclic points.
\Cref{conj:cyc_poly_peg_prob} implies in this case that $Q$ admits a degree-$d$ inscription into every smooth Jordan curve.
Most generally, we pose:

\begin{conj}
\label{conj:reducible2}
A set $Q \subset \bC$ of $2(d+1)$ points admits a degree-$d$ inscription into every smooth Jordan curve $\gamma \subset \bC$ if and only if $Q$ is either cyclic or cyclically reducible.
\end{conj}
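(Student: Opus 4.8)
The statement is a biconditional, and the two implications are of very different character: the forward (``if'') direction reduces cleanly to earlier conjectures, while the reverse (``only if'') direction is the genuine obstacle. For sufficiency, suppose $Q$ is cyclic or cyclically reducible and let $\gamma$ be an arbitrary smooth Jordan curve. If $Q$ is cyclic this is precisely \Cref{conj:cyc_poly_peg_prob}, which holds granting \Cref{conj:maslov2} by \Cref{prop:conj_implies_conj}. If $Q$ is cyclically reducible, fix $a \ge 2$, $b \ge 1$, and $p \in \bC[z]_a^*$ with $ab \le d$ and $p(Q)$ a set of at most $2(b+1)$ concyclic points. Enlarge $p(Q)$ to a set $Q'$ of exactly $2(b+1)$ points on the same circle; \Cref{conj:cyc_poly_peg_prob} in degree $b$ supplies $q \in \bC[z]_b^*$ with $q(Q') \subset \gamma$, so $q(p(Q)) \subset \gamma$, and $q \circ p \in \bC[z]_{ab}^* \subset \bC[z]_d^*$ is a non-constant degree-$d$ inscription of $Q$ into $\gamma$ since $\deg(q\circ p) = \deg q \cdot \deg p \le ab \le d$. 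Thus sufficiency is equivalent to \Cref{conj:cyc_poly_peg_prob}. In particular, for $d = 2$ (the content of \Cref{conj:reducible}) sufficiency is unconditional: the cyclic case is \Cref{thm:quadratic_inscriptions}, and a cyclically reducible six-point set forces $a = 2$, $b = 1$, so the reduction above invokes only the proven degree-$1$ cyclic result of Polterovich--Viterbo.

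For necessity I would argue by contraposition: assuming $Q$ is neither cyclic nor cyclically reducible, exhibit a single smooth Jordan curve $\gamma$ with $I_d^*(Q,\gamma) = \emptyset$. Stage one is to record the algebraic constraint coming from circles: if $p \in \bC[z]_d^*$ and a circle $\{\,|w - w_0| = \rho\,\}$ contains $p(Q)$, then $Q$ lies on the polynomial lemniscate $\{\,z : |p(z) - w_0| = \rho\,\}$, a real-algebraic curve of degree at most $2d$ (a Cassini oval when $d = 2$). This already rules out, e.g., colinear $Q$ by B\'ezout, but not all non-cyclic non-reducible $Q$, so one must bring in more curves. Stage two is to replace circles by a finite, cleverly chosen family of real-analytic Jordan curves $\gamma_1, \dots, \gamma_N$ (circles together with explicit small perturbations, or conics) and to run an elimination-theoretic analysis of the incidence variety $\{\,(p,\gamma) : p(Q) \subset \gamma\,\}$: one wants to show that imposing the existence of an inscription of $Q$ into each $\gamma_i$ cuts the admissible pairs down to a low-dimensional locus, and then that the configurations $Q$ surviving all these conditions are exactly the cyclic and cyclically reducible ones. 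Stage three is to pass from a generic member of a finite-dimensional family of real-algebraic curves to an honest smooth Jordan curve, controlling limits of inscriptions under degeneration and in particular ruling out that a putative non-constant inscription degenerates to a constant lying on $\gamma$.

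The main obstacle lies in stages two and three: there is at present no invariant — symplectic or algebraic — known to detect precisely the failure of $Q$ to be cyclic or cyclically reducible. In the Floer-theoretic framework of this paper, $I_d(Q,\gamma)$ is parametrized by the intersection of two $(d+1)$-tori in $(\bC^{d+1},\omegast)$, and these are Lagrangian exactly when $Q$ is concyclic; off that locus one would want a well-defined count of intersection points that vanishes for a suitably chosen $\gamma$, but the tori are then merely totally real and no such count is available. The most promising route seems to be a topological-degree computation for the projection from the inscription variety to the space of smooth Jordan curves, together with a proof that this degree is nonzero precisely on the cyclic and cyclically reducible locus. Carrying this out — even in the case $d = 2$, where the algebraic geometry of Cassini ovals is classical — appears to require a genuinely new idea, and I expect it to be the crux of the whole problem.
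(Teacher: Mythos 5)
The statement you are addressing is \Cref{conj:reducible2}, which the paper poses as a \emph{conjecture} and does not prove; there is no proof in the paper to compare against. Your treatment of the ``if'' direction is essentially the reduction the paper itself indicates: the cyclic case is exactly \Cref{conj:cyc_poly_peg_prob}, and for a cyclically reducible $Q$ the composition $q \circ p$ with $\deg(q\circ p) \le ab \le d$ is precisely how the paper deduces (conditionally on \Cref{conj:cyc_poly_peg_prob}) that such $Q$ inscribe. Your observation that for $d=2$ the sufficiency is unconditional is also correct: the cyclic case is \Cref{thm:quadratic_inscriptions}, and $ab \le 2$ with $a \ge 2$ forces $(a,b)=(2,1)$, so the reducible case falls back on the proven linear inscription of cyclic quadrilaterals. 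So the forward implication is correctly reduced, but it is reduced to an open conjecture for general $d$, not proved.

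The genuine gap is the ``only if'' direction, and you acknowledge it yourself: stages two and three of your program are a research plan, not an argument. Concretely, nothing in your sketch identifies a mechanism by which failure of cyclicity and cyclic reducibility produces even one smooth Jordan curve $\gamma$ with $I_d^*(Q,\gamma)=\emptyset$; the B\'ezout/lemniscate constraint in stage one only handles inscriptions into circles (and the paper already notes that six points quadratically inscribe into a circle iff they lie on a Cassini oval, which is strictly weaker than being cyclic or cyclically reducible, so circles alone cannot witness necessity). The elimination-theoretic and degeneration steps are not carried out, and as you note, off the concyclic locus the parametrizing tori are merely totally real, so the paper's symplectic machinery gives no count to exploit. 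In short: your write-up is a faithful account of why the sufficiency half is equivalent to \Cref{conj:cyc_poly_peg_prob} together with an honest admission that the necessity half is open; it should not be presented as a proof of the biconditional, and indeed the paper itself leaves the entire statement as a conjecture.
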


\noindent
An affirmative answer to \Cref{conj:reducible2} would therefore solve our motivating problem.

\subsection*{Lagrangian embeddings.}
We give a little more context for \Cref{conj:maslov2}.
Audin conjectured that every Lagrangian embedding of the $n$-torus $\mathbb{T}^n$ in the symplectic vector space $(\bC^n,\omega_\mathrm{std})$ contains a loop of Maslov index 2 \cite{audin1988}.
This conjecture stimulated a lot of work, beginning with the result of Polterovich and Viterbo, and continuing beyond its solution by Cieliebak-Mohnke \cite[Theorem 1.2(b)]{cieliebakmohnke2018}.
The state-of-the-art result is due to Fukaya and Irie, who proved that every Lagrangian embedding of a closed {\em aspherical} manifold (i.e. a $K(\pi,1)$ space) in a symplectic vector space contains a loop of Maslov index 2.
On the other hand, there exist closed Lagrangian submanifolds in a symplectic vector space without a loop of Maslov index 2: for example,
$S^1 \times S^2$ admits a Lagrangian embedding in $\bC^3$ of vanishing Maslov class \cite[Corollary 1.6]{eems2013}.
Beyond the condition of asphericity, there is not yet a good conjectural picture tying the topology of Lagrangian submanifolds $L \subset \bC^n$ to the Maslov 
class $\mu \in H^1(L;\bZ)$.
Following Fukaya, such a picture should involve the string topology of the free loop space of $L$, and the case of a connected sum of tori is not yet well understood.

\subsection*{Parameter counting.}
The following heuristic justifies studying the case $|Q| = 2(d+1)$; it also offers a glimpse as to how the parametrizing tori appear.
Every set $Q$ of $d+1$ points admits a degree-$d$ inscription into every Jordan curve $\gamma$.
Indeed, if $Q = \{q_1,\dots,q_{d+1}\}$ and we select arbitrary targets $z_1,\dots,z_{d+1} \in \gamma$, then there exists a unique polynomial $p \in \bC[z]_d$ such that $p(q_i) = z_i$, $i=1,\dots,d+1$.
It follows that, including the constant inscriptions, the moduli space
$I_d(Q,\gamma)$
is a $(d+1)$-dimensional torus $\mathbb{T}^{d+1}$, parametrized by the images $(p(q_1),\dots,p(q_{d+1})) \in \gamma^{d+1}$.
If instead $Q = \{q_1,\dots,q_k\}$ contains $k \ge d+1$ points, then the moduli space $I_d(Q,\gamma)$ is cut out from this torus by imposing the conditions that $p(q_i) \in \gamma$ for each value $d+1 < i \le k$.
Heuristically, each point condition cuts down the dimension by 1, the codimension of $\gamma \subset \bR^2$.
Throwing out the constant inscriptions, it follows that $I_d^*(Q,\gamma)$ has expected dimension $d+1-(k-d-1) = 2(d+1)-k$.
In particular, we expect $I_d^*(Q,\gamma)$ to consist of a finite number of points when $|Q|=2(d+1)$.
This threshold case is a natural case to consider and hence the focus of this paper.

\subsection*{Plan of the paper.}
\Cref{sec:dud2} develops the symplectic method necessary to prove \Cref{thm:quadratic_inscriptions}, and \Cref{sec:pinwheel} develops the Floer theory necessary to prove \Cref{prop:pinwheel}.

\subsection*{Convention.}
We set $n = d+1$.

\subsection*{Acknowledgements.}  The authors gratefully acknowledge conversations or exchanges of emails with
Paul Biran,
Octav Cornea,
Kenji Fukaya,
and
Yank{\i} Lekili.
\newpage
\section{Polynomials, circles, and symplectic forms.}
\label{sec:dud2}
In this section, we consider the problem of finding a $\bC[z]^*_{n-1}$-inscription of a set of $2n$ points $Q \subset S^1$ into a smooth Jordan curve $\gamma \subset \bC$, recasting it into a problem in symplectic geometry.
\label{subsec:summary_of_symplectic_situation}
We start with $2n$ distinct cyclically ordered points
$\alpha_1, \beta_1, \ldots, \alpha_n, \beta_n \in \bC$.
From this we will define a map $F^\beta_\alpha \in {\rm GL}_n(\bC)$
so that intersection points $\gamma^n \cap F^\beta_\alpha(\gamma^n) \subset \bC^n$ correspond to inscriptions $p \in \bC[z]_{n-1}$ of $\{  \alpha_1, \beta_1, \ldots, \alpha_n, \beta_n \}$ into $\gamma$ (\Cref{prop:intersections_correspond_to_inscriptions}).
Furthermore, when the points are concyclic, we shall exhibit a symplectic $2$-form $\psi^\alpha_\beta \in \Omega^2(\bC^n)$ (\Cref{prop:simultaneously_lagrangian}) with the property that $\gamma^n, F^\beta_\alpha(\gamma^n) \subset \bC^n$ are both Lagrangian with respect to $\psi^\alpha_\beta$ (\Cref{cor:simultaneously_lagrangian}).

We note that $(\bC^n, \psi^\alpha_\beta)$ is symplectomorphic via a diagonal linear map to $(\bC^n,\omegast)$, so that we may appeal to results about this latter space.

Define the diagonal loop
\[
\Delta(\gamma) := \{ (a,a, \ldots, a) : a \in \gamma \} \subset \bC^n.
\]
It is a clean loop of intersection between the two Lagrangians $\gamma^n$, $F^\beta_\alpha(\gamma^n)$ (\Cref{cor:clean}).
The points of $\Delta(\gamma)$ correspond to the constant inscriptions (\Cref{cor:off-diagonal_intersections_are_kewl}), so that self-intersections of the immersed Lagrangian smoothing $L$ of $\gamma^n \cup F^\beta_\alpha(\gamma^n)$ along $\Delta(\gamma)$ correspond to non-constant polynomial inscriptions $p \in \bC[z]_{n-1}^*$.

More precisely, $L \subset \bC^n$ is the image of a Lagrangian immersion of $S^1 \times (\mathbb{T}^{n-1} \# \mathbb{T}^{n-1})$ in $\bC^n$ (\Cref{lem:lag_surg_is_immersion}).  The circle fiber $S^1 \times \{ {\rm pt} \}$ has Maslov index $2n > 2$.
Hence we can conclude \Cref{prop:conj_implies_conj}, that \Cref{conj:maslov2} implies \Cref{conj:cyc_poly_peg_prob}.

\subsection{Two tori in $\bC^n$.}
We now develop the set-up in earnest.

\label{subsec:recasting_as_intersection_problem}
Let $\alpha = (\alpha_1,\dots,\alpha_n)$ denote an $n$-tuple of distinct points of $\bC$.
The map
\[
\mathrm{ev}_\alpha \colon \bC[z]_{n-1} \longrightarrow \bC^n \colon p \longmapsto (p(\alpha_1),\dots,p(\alpha_n))
\]
defines a linear isomorphism between complex vector spaces.
With respect to the basis $\{1,z,z^2,\dots,,z^{n-1}\}$ of $\bC[z]_{n-1}$ and the standard basis of $\bC^n$, it is given by the Vandermonde matrix
\[
V^\alpha =
\left(
\begin{matrix}
	1 & \alpha_1 & \alpha_1^2 & \dots & \alpha_1^{n-2} & \alpha_1^{n-1} \\
	1 & \alpha_2 & \alpha_2^2 & \dots & \alpha_2^{n-2} & \alpha_2^{n-1} \\
	\vdots & \vdots & \vdots & \ddots & \vdots & \vdots \\
	1 & \alpha_n & \alpha_n^2 & \dots & \alpha_n^{n-2} & \alpha_n^{n-1} \\
\end{matrix}
\right)
.
\]
If $\beta=(\beta_1,\dots,\beta_n)$ denotes another $n$-tuple of distinct points of $\bC$, then we define the $\bC$-linear automorphism
\[
F_\alpha^\beta := \mathrm{ev}_\beta \circ (\mathrm{ev}_\alpha)^{-1} : \bC^n \longrightarrow \bC^n {\rm.}
\]
Write $Q = \{\alpha_1,\dots,\alpha_n,\beta_1,\dots,\beta_n\}$, and recall the moduli spaces of inscriptions
\[
I_{n-1}^*(Q,\gamma) := \{ p(z) \in \bC[z]_{n-1}^* : p(Q) \subset \gamma \},
\]
\[
I_{n-1}(Q,\gamma) := \{ p(z) \in \bC[z]_{n-1} : p(Q) \subset \gamma \}.
\]

\begin{prop}
	\label{prop:intersections_correspond_to_inscriptions}
	\label{cor:off-diagonal_intersections_are_kewl}
	Given a Jordan curve $\gamma \subset \bC$, there is a one-to-one correspondence
	\[
	\gamma^n \cap F^\beta_\alpha(\gamma^n) \overset{\sim}{\longrightarrow} I_{n-1}(Q,\gamma),
	\]
	and it carries $(\gamma^n \cap F^\beta_\alpha(\gamma^n)) \setminus \Delta(\gamma)$ to $I_{n-1}^*(Q,\gamma)$.
\end{prop}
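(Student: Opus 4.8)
The plan is to transport the intersection problem across the two evaluation isomorphisms. Recall that $\ev_\alpha, \ev_\beta \colon \bC[z]_{n-1} \to \bC^n$ are linear isomorphisms and that $F^\beta_\alpha = \ev_\beta \circ (\ev_\alpha)^{-1}$. I would define the candidate correspondence
\[
\Phi \colon \gamma^n \cap F^\beta_\alpha(\gamma^n) \longrightarrow I_{n-1}(Q,\gamma), \qquad x \longmapsto (\ev_\beta)^{-1}(x),
\]
with proposed inverse $p \mapsto \ev_\beta(p)$, and then verify that these maps are well-defined and mutually inverse. The key observation is simply that, for $p \in \bC[z]_{n-1}$, the condition $p(Q) \subset \gamma$ splits as "$p(\beta_j) \in \gamma$ for all $j$'', which says $\ev_\beta(p) \in \gamma^n$, together with "$p(\alpha_i) \in \gamma$ for all $i$'', which says $\ev_\alpha(p) \in \gamma^n$.

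First I would check that $\Phi$ lands in $I_{n-1}(Q,\gamma)$. If $x \in \gamma^n \cap F^\beta_\alpha(\gamma^n)$ and $p := (\ev_\beta)^{-1}(x)$, then $x = \ev_\beta(p) = (p(\beta_1),\dots,p(\beta_n)) \in \gamma^n$ gives $p(\beta_j) \in \gamma$ for every $j$; and since $x \in F^\beta_\alpha(\gamma^n)$ we have $(F^\beta_\alpha)^{-1}(x) = \ev_\alpha \circ (\ev_\beta)^{-1}(x) = \ev_\alpha(p) \in \gamma^n$, giving $p(\alpha_i) \in \gamma$ for every $i$. Hence $p(Q) \subset \gamma$. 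Conversely, given $p \in I_{n-1}(Q,\gamma)$, set $x := \ev_\beta(p)$: then $x \in \gamma^n$ because $p(\beta_j) \in \gamma$, and $(F^\beta_\alpha)^{-1}(x) = \ev_\alpha(p) \in \gamma^n$ because $p(\alpha_i) \in \gamma$, so $x \in \gamma^n \cap F^\beta_\alpha(\gamma^n)$. Because $\ev_\beta$ is a bijection, the two assignments are mutually inverse, which establishes the one-to-one correspondence.

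For the diagonal clause, I would argue that $\Phi$ carries $\Delta(\gamma)$ bijectively onto the set of constant inscriptions. A point $x = (a,a,\dots,a) \in \Delta(\gamma)$ equals $\ev_\beta(a)$, where $a \in \bC[z]_{n-1}$ now denotes the constant polynomial of value $a$, so $\Phi(x) = a$ is constant. Conversely, if $p \in I_{n-1}(Q,\gamma)$ is constant, $p \equiv c$ with $c \in \gamma$, then $\ev_\beta(p) = (c,\dots,c) \in \Delta(\gamma)$; note this uses only the injectivity of $\ev_\beta$, i.e. that a polynomial of degree $\le n-1$ is pinned down by its values at the $n$ distinct nodes $\beta_1,\dots,\beta_n$, so the polynomials sent into the diagonal are exactly the constants. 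It follows that $\Phi$ restricts to a bijection $\Delta(\gamma) \to I_{n-1}(Q,\gamma) \setminus I_{n-1}^*(Q,\gamma)$, hence carries $(\gamma^n \cap F^\beta_\alpha(\gamma^n)) \setminus \Delta(\gamma)$ onto $I_{n-1}^*(Q,\gamma)$, as claimed.

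The argument is entirely formal, so I do not expect a genuine obstacle. The only point needing (minor) care is the bookkeeping of which evaluation map is used on which side of the correspondence — using $(\ev_\beta)^{-1}$ rather than $(\ev_\alpha)^{-1}$, so that $F^\beta_\alpha(\gamma^n)$ rather than its preimage shows up — together with the elementary interpolation fact that identifies $\Delta(\gamma)$ with the constant inscriptions.
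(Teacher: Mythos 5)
Your proof is correct and takes essentially the same approach as the paper: both transport the intersection through the linear isomorphism $\ev_\beta^{-1}$, identify the resulting subset of $\bC[z]_{n-1}$ with $I_{n-1}(Q,\gamma)$, and observe that $\ev_\beta^{-1}$ carries the diagonal exactly onto the constant polynomials. Your write-up is a little more explicit about the mutual-inverse verification, but the content is identical.
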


\begin{proof}
	Elements of $\gamma^n \cap F^\beta_\alpha(\gamma^n)$ correspond via $\ev_\beta^{-1}$
	to elements of $\ev_\beta^{-1}(\gamma^n) \cap \ev_\alpha^{-1} (\gamma^n) \subset \C[z]_{n-1}$.
	Such elements are polynomials $p \in \bC[z]_{n-1}$ satisfying
	\[ (p(\alpha_1), \ldots, p(\alpha_n)), (p(\beta_1), \ldots , p(\beta_n)) \in \gamma^n, \]
	which is to say that $p(z) \in I_{n-1}(Q,\gamma)$.
	Moreover, $\ev_\beta^{-1}$ carries the thin diagonal $\Delta(\bC) \subset \bC^n$ into the set of constant polynomials $\bC \subset \bC[z]_{n-1}$.
	Hence it carries $\Delta(\gamma)$ into the set of constant inscriptions.
	This establishes the second part of the result.
\end{proof}

\subsection{Two Lagrangians in $\bC^n$.}
In this subsection we shall restrict to the case when the
point set is contained in the unit circle: $Q \subset S^1$.
We shall then use this assumption to find a symplectic form on $\bC^n$ for which $\gamma^n$ and $F_\alpha^\beta(\gamma^n)$ are simultaneously Lagrangian.

\label{subsec:checking_lagrangian}
Let $z_k = x_k + \sqrt{-1} y_k$, $k=1,\dots,n$, denote coordinates on $\bC^n = \bR^{2n}$.
\begin{defin}
A complexified 2-form $\omega \in \Omega^2_{\bC}\bC^n = \Omega^2(\bC^n) \otimes \bC$ is called \emph{diagonal} if it takes the form
\[
\omega = \sum_{k=1}^n \lambda_k \cdot d z_k \wedge d \overline{z_k} 
\]
for some coefficients $\lambda_k \in \bC$.  The form $\omega$ is called \emph{real} if each $\lambda_k \in \bR \sqrt{-1}$ and \emph{positive} if furthermore $\lambda_k \sqrt{-1} < 0$.
\end{defin}

The reason for the notation is firstly that positive real diagonal $2$-forms $\omega$ live in the subspace $\Omega^2(\bR^{2n}) \subset \Omega^2_{\bC}\bC^n$ (hence `real').  Secondly, when written in the standard basis of $\Omega^2(\bR^{2n})$, they take the form
\[ \omega = \sum_{k=1}^n \mu_k \cdot d x_k \wedge d y_k \]
where each $\mu_k > 0$ (hence `positive diagonal').  Note that $\gamma^n$ is Lagrangian for any such form -- we wish to find such a form for which $F_\alpha^\beta(\gamma^n)$ is also Lagrangian.   Working in the bigger complexified space $\Omega^2_{\bC}\bC^n$ will be convenient for our proofs.

\begin{prop}
	\label{prop:simultaneously_lagrangian}
	Suppose that $\alpha, \beta \in (S^1)^{n}$ are $n$-tuples of points on the unit circle such that
	\[ (\alpha_1, \beta_1, \alpha_2, \beta_2, \ldots, \alpha_n, \beta_n) \in (S^1)^{2n} \]
	is a cyclically ordered $2n$-tuple of distinct points.  Then we have the following:

	\begin{enumerate}[(i)]	
	\item If we write $\Delta_n \subset \Omega_\bC^2(\bC^n)$ for the subspace of diagonal forms, then the complex vector space
	\[ \Delta_n \cap (F^\beta_\alpha)^* \Delta_n \] is $1$-dimensional.
	
	\item Furthermore, we can find positive real diagonal forms $\psi_\alpha^\beta$ and $\psi^\alpha_\beta$ such that 
	\[ (F^\beta_\alpha)^* \psi^\alpha_\beta = \psi_\alpha^\beta {\rm .} \]
	\end{enumerate}
\end{prop}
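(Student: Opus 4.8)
The plan is to reduce both assertions to an explicit computation with Lagrange interpolation, the hypothesis $Q\subset S^1$ entering through the relation $\overline z=1/z$, which converts a Hermitian condition into a holomorphic one. First I would unwind $F^\beta_\alpha=\ev_\beta\circ(\ev_\alpha)^{-1}$: since $(\ev_\alpha)^{-1}$ sends the $k$-th standard basis vector to the Lagrange polynomial $\ell_k^\alpha(z)=\prod_{m\ne k}\frac{z-\alpha_m}{\alpha_k-\alpha_m}=\frac{P(z)}{(z-\alpha_k)P'(\alpha_k)}$, where $P(z)=\prod_{m=1}^n(z-\alpha_m)$, one gets $(F^\beta_\alpha)_{ki}=\ell_i^\alpha(\beta_k)$. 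For a diagonal form $\omega=\sum_k\lambda_k\,dz_k\wedge d\overline{z_k}$ a direct pullback computation gives $(F^\beta_\alpha)^*\omega=\sum_{i,j}M_{ij}\,dz_i\wedge d\overline{z_j}$ with $M_{ij}=\sum_k\lambda_k\,\ell_i^\alpha(\beta_k)\overline{\ell_j^\alpha(\beta_k)}$, so $\omega\in\Delta_n$ pulls back into $\Delta_n$ exactly when $M_{ij}=0$ for all $i\neq j$; as $(F^\beta_\alpha)^*$ is injective, $\Delta_n\cap(F^\beta_\alpha)^*\Delta_n$ has the dimension of this solution space in $\lambda\in\bC^n$.

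Now I would exploit $Q\subset S^1$. Substituting $\overline z=1/z$ into the products defining $\ell_j^\alpha$ yields $\overline{\ell_j^\alpha(\beta_k)}=(\alpha_j/\beta_k)^{n-1}\ell_j^\alpha(\beta_k)$, so, with $\nu_k:=\lambda_k/\beta_k^{\,n-1}$ and after dividing by the nonzero factor $\alpha_j^{\,n-1}$, the conditions become $\sum_k\nu_k\,(\ell_i^\alpha\ell_j^\alpha)(\beta_k)=0$ for $i\neq j$, which is now symmetric in $i,j$. For $i\neq j$ the degree-$(2n-2)$ polynomial $\ell_i^\alpha\ell_j^\alpha$ vanishes at every $\alpha_m$, hence $\ell_i^\alpha\ell_j^\alpha=P\cdot r_{ij}$ with $r_{ij}(z)=\frac{P(z)}{(z-\alpha_i)(z-\alpha_j)P'(\alpha_i)P'(\alpha_j)}\in\bC[z]_{n-2}$; using $\frac{P}{(z-\alpha_i)(z-\alpha_j)}=\frac{1}{\alpha_i-\alpha_j}\bigl(\frac{P}{z-\alpha_i}-\frac{P}{z-\alpha_j}\bigr)$ and the linear independence of the $\ell_m^\alpha$, one checks that already $r_{12},\dots,r_{1n}$ are linearly independent, so $\{r_{ij}\}$ spans the $(n-1)$-dimensional space $\bC[z]_{n-2}$. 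Therefore, setting $w_k:=\nu_k\,P(\beta_k)$, the system is equivalent to $\sum_k w_k\,r(\beta_k)=0$ for all $r\in\bC[z]_{n-2}$, i.e. to $w$ lying in the kernel of the $(n-1)\times n$ matrix $(\beta_k^{\,j})_{0\le j\le n-2}$. Since the $\beta_k$ are distinct this matrix has rank $n-1$, so its kernel is one-dimensional, spanned by $w_k=1/Q'(\beta_k)$ with $Q(z)=\prod_l(z-\beta_l)$; and $\lambda\mapsto w$ is an invertible diagonal rescaling (all $\beta_k$ and $P(\beta_k)$ are nonzero). This proves (i), and shows the solution line is spanned by $\lambda_k=\beta_k^{\,n-1}/\bigl(P(\beta_k)Q'(\beta_k)\bigr)$.

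For (ii) I would write $\beta_k=e^{\sqrt{-1}\phi_k}$ and $\alpha_m=e^{\sqrt{-1}\psi_m}$ with $\psi_1<\phi_1<\psi_2<\phi_2<\cdots<\psi_n<\phi_n<\psi_1+2\pi$, and expand every difference using $e^{\sqrt{-1}a}-e^{\sqrt{-1}b}=2\sqrt{-1}\,e^{\sqrt{-1}(a+b)/2}\sin\tfrac{a-b}{2}$. Collecting the exponential prefactors, the $k$-dependent phases in $\beta_k^{\,n-1}$ and in $P(\beta_k)Q'(\beta_k)$ cancel, leaving $\lambda_k=c'/D_k$ with $c'$ independent of $k$ and $D_k=\prod_m\sin\tfrac{\phi_k-\psi_m}{2}\prod_{l\neq k}\sin\tfrac{\phi_k-\phi_l}{2}\in\bR$. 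The cyclic ordering forces each of the two products to have sign $(-1)^{\,n-k}$, so $D_k>0$ for every $k$; choosing $c'=\varepsilon\sqrt{-1}$ with $\varepsilon>0$ then makes $\psi^\alpha_\beta:=\sum_k\lambda_k\,dz_k\wedge d\overline{z_k}$ a positive real diagonal form. Its pullback is diagonal by construction, with $j$-th coefficient $\sum_k\lambda_k\,|(F^\beta_\alpha)_{kj}|^2=\varepsilon\sqrt{-1}\sum_k D_k^{-1}|(F^\beta_\alpha)_{kj}|^2$, again a positive multiple of $\sqrt{-1}$ (the $j$-th column of $F^\beta_\alpha$ is nonzero), so it is positive real diagonal; this is the form $\psi^\beta_\alpha$.

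I expect the pullback formula and the $\overline z=1/z$ manipulation to be pure bookkeeping. The step requiring the most care — and the only place the cyclic-ordering hypothesis is genuinely used — is the final sign count: one must push the half-angle factorizations through carefully to see that all the $D_k$ come out positive, which is exactly what upgrades the one-dimensional solution found in (i) to an honest positive real (hence symplectic) diagonal form.
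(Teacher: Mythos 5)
Your proof is correct and it reaches both conclusions by a genuinely different route from the paper. Where the paper transports the diagonal forms back to $\bC[z]_{n-1}$ via $\ev_\alpha^*$ and $\ev_\beta^*$, obtaining a single $2n\times(2n-1)$ generalized Vandermonde system in the $2n$ unknowns $(\lambda_1,\dots,\lambda_n,\mu_1,\dots,\mu_n)$ and proving rank $2n-1$ by exhibiting an invertible square minor, you stay in $\bC^n$ and express $(F^\beta_\alpha)_{ki}=\ell^\alpha_i(\beta_k)$ via Lagrange interpolation. The circle hypothesis enters for you cleanly as $\overline z=1/z$, symmetrizing the off-diagonal conditions and collapsing them to a single honest $(n-1)\times n$ Vandermonde kernel in $w_k=\lambda_k P(\beta_k)/\beta_k^{\,n-1}$, for which $w_k=1/Q'(\beta_k)$ is the standard residue solution; the linear-independence check on the $r_{1j}$ that justifies the reduction to $\bC[z]_{n-2}$ is correct. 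This yields the explicit formula $\lambda_k=\beta_k^{\,n-1}/\bigl(P(\beta_k)Q'(\beta_k)\bigr)$, which the paper's approach never produces in closed form (the paper instead reads off the ratios $\lambda_j/\mu_n$, $\mu_j/\mu_n$ from the inverse of the Vandermonde block $W$). For positivity, you use the half-angle identity $e^{\sqrt{-1}a}-e^{\sqrt{-1}b}=2\sqrt{-1}\,e^{\sqrt{-1}(a+b)/2}\sin\tfrac{a-b}{2}$ to show the phases are $k$-independent and each $D_k$ carries sign $(-1)^{n-k}\cdot(-1)^{n-k}=+1$; the paper instead packages each ratio as a product of $|\cdot|^2$ factors and cross-ratios on $S^1$ and invokes positivity of the cross-ratios forced by the cyclic order. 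Both arguments hinge on the same two ingredients --- the circle relation $\overline z=1/z$ (in the paper it appears as $\alpha_k^{i-1}\overline{\alpha_k^{j-1}}=\alpha_k^{i-j}$) and the alternating cyclic order --- but your version produces a shorter system, an explicit solution, and a more elementary positivity check. One small clarification you may want to make: the solution vector is only defined up to a complex scalar, so rather than ``choosing $c'=\varepsilon\sqrt{-1}$'' it is cleaner to say you rescale the one-dimensional kernel so that the common prefactor becomes a positive multiple of $\sqrt{-1}$.
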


The point of this proposition is the following corollary.
\begin{cor}
	\label{cor:simultaneously_lagrangian}
	Both $\gamma^n$ and $F_\alpha^\beta(\gamma^n)$ are Lagrangian with respect to the symplectic form $\psi_\beta^\alpha$.
\end{cor}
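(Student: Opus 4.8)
The plan is to deduce the corollary directly from \Cref{prop:simultaneously_lagrangian}, of which it is essentially a formal consequence. First I would check that $\psi^\alpha_\beta$ (and likewise $\psi_\alpha^\beta$) is a genuine symplectic form: a positive real diagonal $2$-form equals $\sum_{k=1}^n \mu_k\, dx_k \wedge dy_k$ with all $\mu_k > 0$, so it has constant coefficients (hence is closed) and is nondegenerate. Next, I would recall the observation already made in the text that $\gamma^n$ is Lagrangian for \emph{any} positive real diagonal form: the submanifold $\gamma^n \subset \bC^n$ has dimension $n = \tfrac12 \dim_\bR \bC^n$, and at a point $(a_1,\dots,a_n)$ its tangent space is $T_{a_1}\gamma \times \cdots \times T_{a_n}\gamma$, with each $T_{a_k}\gamma$ a line inside the $k$-th coordinate plane; since a diagonal form pairs the $k$-th coordinate plane only with itself, and a line is isotropic for the area form $dx_k \wedge dy_k$ on that plane, the form restricts to zero on $T(\gamma^n)$, and the dimension count finishes it. In particular $\gamma^n$ is Lagrangian for $\psi^\alpha_\beta$.

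For $F^\beta_\alpha(\gamma^n)$ I would transport this vanishing across the linear automorphism $F^\beta_\alpha$ using the pullback identity $(F^\beta_\alpha)^* \psi^\alpha_\beta = \psi_\alpha^\beta$ supplied by \Cref{prop:simultaneously_lagrangian}(ii). Since $F^\beta_\alpha$ is a diffeomorphism of $\bC^n$, the image $F^\beta_\alpha(\gamma^n)$ is again an $n$-dimensional submanifold; and for tangent vectors $u = (F^\beta_\alpha)_* v$, $u' = (F^\beta_\alpha)_* v'$ at a point of it, with $v, v' \in T(\gamma^n)$, one computes
\[
\psi^\alpha_\beta(u, u') = \big((F^\beta_\alpha)^*\psi^\alpha_\beta\big)(v, v') = \psi_\alpha^\beta(v, v') = 0,
\]
the final equality holding because $\psi_\alpha^\beta$ is itself a positive real diagonal form, hence vanishes on $\gamma^n$ by the first paragraph. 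Thus $\psi^\alpha_\beta$ restricts to zero on $F^\beta_\alpha(\gamma^n)$, which is therefore Lagrangian as well.

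I do not expect any genuine obstacle in this argument: all of the substance lies in \Cref{prop:simultaneously_lagrangian} — in particular in locating, via its part (i), the one-dimensional space of diagonal forms on which the comparison can be made, and then in arranging the positivity and reality in part (ii). Once that proposition is in hand, the corollary follows from the pullback identity together with the elementary fact that $\gamma^n$ is isotropic for every diagonal form.
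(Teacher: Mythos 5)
Your proof is correct and follows essentially the same route as the paper's: observe that $\gamma^n$ is Lagrangian for any positive real diagonal form (the paper notes this right after introducing the definition), then transport that fact across $F^\beta_\alpha$ via the pullback identity $(F^\beta_\alpha)^*\psi^\alpha_\beta = \psi_\alpha^\beta$ from \Cref{prop:simultaneously_lagrangian}(ii). You simply spell out the tangent-space computation that the paper leaves implicit.
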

\begin{proof}
	Note that $\gamma^n$ is Lagrangian with respect to $\psi^\beta_\alpha = (F^\beta_\alpha)^*(\psi^\alpha_\beta)$, so that $F^\beta_\alpha(\gamma^n)$ is Lagrangian with respect to $\psi^\alpha_\beta$.  And $\gamma^n$ is also itself Lagrangian with respect to this form $\psi^\alpha_\beta$, so we are done.
\end{proof}

The second part of the statement of Proposition~\ref{prop:simultaneously_lagrangian} is therefore what mainly interests us.  The first part is there to guide us through the proof.

\begin{proof}[Proof of \Cref{prop:simultaneously_lagrangian}.]
		In what follows we identify $\bC[z]_{n-1}$ with $\C^n$ via the linear isomorphism
	\[ a_0 + a_1 z + \ldots a_{n-1} z^{n-1} \longmapsto \left( \begin{matrix} a_0 \\ a_1 \\ \vdots \\ a_{n-1} \end{matrix} \right) \]
	
(i)	We begin by looking for diagonal forms $\omega^\beta_\alpha, \omega^\alpha_\beta \in \Delta_n$ such that
	\[ \omega_\alpha^\beta = (F^\beta_\alpha)^* \omega_\beta^\alpha = ( (\ev_\alpha^*)^{-1} \circ \ev_\beta^*  )\omega_\beta^\alpha \]
	or, equivalently,
	\[ (\ev_\alpha^*) \omega_\alpha^\beta = (\ev_\beta^*)  \omega_\beta^\alpha \in \Omega_\bC^2(\bC[z]_{n-1}){\rm .} \]
	
	If we write
	\[ \omega_\alpha^\beta = \sum_{k=1}^n \lambda_k \cdot d z_k \wedge d \overline{z_k} \,\,\,\, {\rm and} \,\,\,\, \omega_\beta^\alpha = \sum_{k=1}^n \mu_k \cdot d z_k \wedge d \overline{z_k}\]
	then we have
	\begin{align*}
		(\ev_\alpha^*) \omega_\alpha^\beta &= \sum_{k=1}^n \lambda_k \cdot (\ev_\alpha^* d z_k) \wedge (\ev_\alpha^* d \overline{z_k}) = \sum_{1 \leq i,j,k \leq n} \lambda_k \cdot (V^\alpha_{ki} d z_i) \wedge (\ol{V^\alpha_{kj}}d \overline{z_j}) \\
		&= \sum_{1 \leq i,j,k \leq n} \lambda_k \cdot V^\alpha_{ki} \ol{V^\alpha_{kj}} \cdot d z_i \wedge d \overline{z_j} = \sum_{1 \leq i,j,k \leq n} \lambda_k \cdot \alpha_{k}^{i-1} \ol{\alpha_{k}^{j-1}} \cdot d z_i \wedge d \overline{z_j} \\
		 &=
		\sum_{1 \leq i,j,k \leq n} \lambda_k \cdot \alpha_{k}^{i-j} \cdot d z_i \wedge d \overline{z_j}
	\end{align*}
	and similarly
	\[ (\ev_\beta^*) \omega^\alpha_\beta = \sum_{1 \leq i,j,k \leq n} \mu_k \cdot \beta_{k}^{i-j} \cdot d z_i \wedge d \overline{z_j} {\rm .} \]
	
	It follows that $(\ev_\alpha^*) \omega_\alpha^\beta = (\ev_\beta^*)  \omega_\beta^\alpha$ exactly when we have
	\begin{equation}
		\sum_{k=1}^n \lambda_k \alpha_k^{i-j} - \mu_k \beta_k^{i-j} = 0
	\end{equation}
	for all choices $1 \leq i, j \leq n$.  In other words, we are looking for coefficients $\lambda_k, \mu_k \in \bC$ for $1 \leq k \leq n$ that satisfy the matrix equation
	\[ \left( \begin{matrix} \lambda_1 & \lambda_2 & \cdots & \lambda_n & -\mu_1 & \cdots & -\mu_n \end{matrix} \right) \left( \begin{matrix} \alpha_1^{1-n} & \alpha_1^{2-n} & \cdots & 1 & \cdots & \alpha_1^{n-1} \\
	\alpha_2^{1-n} & \alpha_2^{2-n} &\cdots & 1 & \cdots & \alpha_2^{n-1} \\
	\vdots & \vdots &\ddots & \vdots & \ddots & \vdots \\
	\alpha_n^{1-n} & \alpha_n^{2-n} & \cdots & 1 &\cdots & \alpha_n^{n-1} \\
	\beta_1^{1-n} & \beta_1^{2-n} &\cdots & 1 & \cdots & \beta_1^{n-1} \\
	\vdots & \vdots &\ddots & \vdots & \ddots & \vdots \\
	\beta_n^{1-n} & \beta_n^{2-n} &\cdots & 1 & \cdots & \beta_n^{n-1} \\
	 \end{matrix} \right) = 0 {\rm .} \]
	 The matrix appearing in this equation with $2n$ rows and $2n-1$ columns we call $V$.  The square matrix obtained by deleting the final row we call $W$.
	 
	 The determinant of $W$ is non-zero, since it becomes a Vandermonde matrix after multiplying each row by the final entry of that row.  Hence $V$ is of rank $2n-1$ so there is a $1$-dimensional space of vectors $\left( \begin{matrix} \lambda_1 & \lambda_2 & \cdots & \lambda_n & -\mu_1 & \cdots & -\mu_n \end{matrix} \right)$ satisfying the equation, from which we fix a non-zero vector to give the coefficients $\lambda_k, \mu_k \in \bC$ of $\omega^\alpha_\beta$ and $\omega_\alpha^\beta$.

(ii)	Now we wish to see that there exists a complex number $\sigma \in \bC^\times$ such that
	 \[ \psi^\alpha_\beta = \sigma \omega^\alpha_\beta \,\,\,\, {\rm and} \,\,\,\, \psi_\alpha^\beta = \sigma \omega_\alpha^\beta \]
	 are both positive real diagonal forms.  We shall take $\sigma = \sqrt{-1}/\mu_n$.  So it remains to show that $\mu_n \not= 0$ and that $\lambda_j / \mu_n$ and $\mu_j / \mu_n$ are positive real numbers for $1 \leq j \leq n$.
	 
	 We begin by postmultiplying the matrix equation in (i) by $W^{-1}$.  This results in the matrix equation
	 \[ \left( \begin{matrix} \lambda_1 & \lambda_2 & \cdots & \lambda_n & -\mu_1 & \cdots & -\mu_n \end{matrix} \right) \left( \begin{matrix} 1 & 0 & \cdots & 0 \\
	 	0 & 1 &\cdots & 0 \\
	 	\vdots & \vdots &\ddots & \vdots \\
	 	0 & 0 & \cdots & 1 \\
	 	(\beta_n^{1-n} & \beta_n^{2-n} &\cdots & \beta_n^{n-1}) W^{-1}
	 \end{matrix} \right) = 0 {\rm .} \]
	 From this we see firstly that $\mu_n \not= 0$, since otherwise only the zero vector would be a solution to this equation.  Secondly, from this equation we are able to read off the quotients $\lambda_j / \mu_n$ and $\mu_j / \mu_n$; indeed, if we write $\rho_j$ for the $j$th entry of the final row of the second matrix in this product then we have
	 \[ \frac{\lambda_j}{\mu_n} = \rho_j \,\,\,\, {\rm for} \,\,\,\, 1 \leq j \leq n {\rm ,}\,\,\,\, {\rm and} \,\,\,\, \frac{\mu_{j}}{\mu_n} = -\rho_{j+n} \,\,\,\, {\rm for} \,\,\,\, 1 \leq j \leq n-1 {\rm .}\]
	 
	 We compute for $1 \leq j \leq n-1$ (where the second equality follows from properties of Vandermonde inverses)
	 \begin{align*}
	 	\frac{\lambda_j}{\mu_n} &= \rho_j =  \left(\frac{\alpha_j}{\beta_n}\right)^{n-1} \prod_{k \not= j} \frac{\beta_n - \alpha_k}{\alpha_j - \alpha_k} \prod_{k \not= n}\frac{\beta_n - \beta_k}{\alpha_j - \beta_k} \\
	 	&= \prod_{k \not= j} \left(\frac{\alpha_j}{\beta_n}\right)\frac{\beta_n - \alpha_k}{\alpha_j - \alpha_k} \prod_{k \not= n}\frac{\beta_n - \beta_k}{\alpha_j - \beta_k}= \prod_{k \not= j} \frac{1 - \alpha_k \ol{\beta_n}}{1 - \alpha_k\ol{\alpha_j}} \prod_{k \not= n}\frac{\beta_n - \beta_k}{\alpha_j - \beta_k} \\
	 	&= \prod_{k \not= j} \frac{\ol{\beta_n} - \ol{\alpha_k}}{\ol{\alpha_j} - \ol{\alpha_k}} \prod_{k \not= n}\frac{\beta_n - \beta_k}{\alpha_j - \beta_k} =
	 	\prod_{k \not= j} \left\vert \frac{\beta_n - \alpha_k}{\alpha_j - \alpha_k} \right\vert^2 \frac{\alpha_j - \alpha_k}{\beta_n - \alpha_k}
	 	 \prod_{k \not= n}\frac{\beta_n - \beta_k}{\alpha_j - \beta_k} \\
	 	 &= \left\vert \frac{\beta_n - \alpha_n}{\alpha_j - \alpha_n} \right\vert^2 \frac{(\alpha_j - \alpha_n)(\beta_n - \beta_j)}{(\alpha_j - \beta_j)(\beta_n - \alpha_n)}
	 	 \prod_{k \not= j,n} \left\vert \frac{\beta_n - \alpha_k}{\alpha_j - \alpha_k} \right\vert^2 \frac{(\alpha_j - \alpha_k)(\beta_n - \beta_k)}{(\alpha_j - \beta_k)(\beta_n - \alpha_k)} \\
	 	 &=\left\vert \frac{\beta_n - \alpha_n}{\alpha_j - \alpha_n} \right\vert^2 [\alpha_j, \beta_n; \alpha_n , \beta_j] \prod_{k \not= j,n} \left\vert \frac{\beta_n - \alpha_k}{\alpha_j - \alpha_k} \right\vert^2 [\alpha_j, \beta_n; \alpha_k, \beta_k] > 0
	 \end{align*}
	 where the final inequality follows since every cross-ratio appearing is positive.  For $j=n$ we have similarly
	 \begin{align*}
	 	\frac{\lambda_n}{\mu_n} &= \rho_n =  \left(\frac{\alpha_n}{\beta_n}\right)^{n-1} \prod_{k \not= n} \frac{\beta_n - \alpha_k}{\alpha_n - \alpha_k} \prod_{k \not= n}\frac{\beta_n - \beta_k}{\alpha_n - \beta_k} \\
	 	&= \prod_{k \not= n} \left\vert \frac{\beta_n - \alpha_k}{\alpha_n - \alpha_k} \right\vert^2 \frac{(\alpha_n - \alpha_k)(\beta_n - \beta_k)}{(\alpha_n - \beta_k)(\beta_n - \alpha_k)} = \prod_{k \not= n} \left\vert \frac{\beta_n - \alpha_k}{\alpha_n - \alpha_k} \right\vert^2 [\alpha_n, \beta_n; \alpha_k, \beta_k] > 0 {\rm .}
	 \end{align*}
	 
	 And on the other hand we have
	 \begin{align*}
	 	\frac{\mu_{j}}{\mu_n} &= -\rho_{j+n} = - \left( \frac{\beta_j}{\beta_n} \right)^{n-1} \prod_k \frac{\beta_n - \alpha_k}{\beta_j - \alpha_k} \prod_{k \not= j,n} \frac{\beta_n - \beta_k}{\beta_j - \beta_k} \\
	 	&= -\frac{\beta_n - \alpha_n}{\beta_j - \alpha_n}\prod_{k \not= n} \left( \frac{\beta_j}{\beta_n} \right) \frac{\beta_n - \alpha_k}{\beta_j - \alpha_k} \prod_{k \not= j,n} \frac{\beta_n - \beta_k}{\beta_j - \beta_k} \\
	 	&=-\frac{\beta_n - \alpha_n}{\beta_j - \alpha_n} \prod_{k \not= n} 
	 	 \left\vert \frac{\beta_n - \alpha_k}{\beta_j - \alpha_k} \right\vert^2 \frac{\beta_j - \alpha_k}{\beta_n - \alpha_k}\prod_{k \not= j,n} \frac{\beta_n - \beta_k}{\beta_j - \beta_k} \\
	 	&= -\left\vert \frac{\beta_n - \alpha_j}{\beta_j - \alpha_j} \right\vert^2 \frac{(\beta_n - \alpha_n)(\beta_j - \alpha_j)}{(\beta_n - \alpha_j)(\beta_j - \alpha_n)} \prod_{k \not= j,n} \left\vert \frac{\beta_n - \alpha_k}{\beta_j - \alpha_k} \right\vert^2 \frac{(\beta_j - \alpha_k)(\beta_n - \beta_k)}{(\beta_j - \beta_k)(\beta_n - \alpha_k)} \\
	 	&= -\left\vert \frac{\beta_n - \alpha_j}{\beta_j - \alpha_j} \right\vert^2 [\beta_n,\beta_j;\alpha_n,\alpha_j] \prod_{k \not= j,n} \left\vert \frac{\beta_n - \alpha_k}{\beta_j - \alpha_k} \right\vert^2 [\beta_j,\beta_n;\alpha_k,\beta_k] >  0
	 \end{align*}
	 where the final inequality follows since all cross-ratios appearing in the product are positive apart from $[\beta_n,\beta_j;\alpha_n,\alpha_j]$, which is negative.
\end{proof}

\subsection{Polynomials, circles, and real subspaces.}
\label{subsec:clean_intersection}
The previous subsection showed that the problem of finding intersection points between the two tori $\gamma^n$ and $F^\beta_\alpha(\gamma^n)$ is a symplectic problem of finding intersection points between two Lagrangians.
As we have noted, there is a circle of intersection points corresponding to inscriptions by constant polynomials.
In this subsection we show that the circle of intersection is clean.
This property is necessary both for performing Lagrangian surgery in the proof of \Cref{prop:conj_implies_conj} and for studying the pearl Floer complex in \Cref{sec:pinwheel}.

\begin{prop}
	\label{prop:clean}
	Suppose that $\alpha_1,\beta_1,\dots,\alpha_n,\beta_n$ are distinct cyclically ordered points on $S^1$.
	Then 
	\begin{itemize}
		\item $F_\alpha^\beta(\bR^n) \cap \bR^n = \langle {\bf 1} \rangle$, where ${\bf 1} =(1,\dots,1)$, and
		\item $F_\alpha^\beta({\bf 1}) = {\bf 1}$, i.e. ${\bf 1}$ is an eigenvector of $F_\alpha^\beta$ with eigenvalue 1.
	\end{itemize}
\end{prop}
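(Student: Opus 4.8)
\emph{Proof proposal.}
The plan is to push both assertions through the evaluation isomorphism $\ev_\alpha$ and reduce them to an elementary fact about polynomials that are real-valued on finitely many points of the unit circle.

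For the second bullet I would argue directly: the constant polynomial $p \equiv 1$ lies in $\bC[z]_{n-1}$ and satisfies $\ev_\alpha(p) = \ev_\beta(p) = {\bf 1}$, so $F_\alpha^\beta({\bf 1}) = \ev_\beta(\ev_\alpha^{-1}({\bf 1})) = \ev_\beta(p) = {\bf 1}$. More generally $\ev_\alpha$ carries the real constant polynomials onto the real line $\langle {\bf 1} \rangle$, and each such vector is fixed by $F_\alpha^\beta$ for the same reason; in particular this already gives the inclusion $\langle {\bf 1} \rangle \subseteq F_\alpha^\beta(\bR^n) \cap \bR^n$.

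For the first bullet, note that $F_\alpha^\beta = \ev_\beta \circ \ev_\alpha^{-1}$, so a vector $v \in \bR^n$ satisfies $F_\alpha^\beta(v) \in \bR^n$ exactly when the polynomial $p := \ev_\alpha^{-1}(v) \in \bC[z]_{n-1}$ takes real values at $\alpha_1,\dots,\alpha_n$ (which is the condition $v \in \bR^n$) and also at $\beta_1,\dots,\beta_n$ (which is the condition $F_\alpha^\beta(v) \in \bR^n$). Thus the identity $F_\alpha^\beta(\bR^n)\cap\bR^n = \langle {\bf 1} \rangle$ follows once I show that the only $p \in \bC[z]_{n-1}$ which is real-valued at all $2n$ distinct points $\alpha_1,\beta_1,\dots,\alpha_n,\beta_n$ of $S^1$ is a real constant. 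To see this, write $p(z) = \sum_{k=0}^{n-1} a_k z^k$; since $\overline z = z^{-1}$ on $S^1$, the condition $p(z) = \overline{p(z)}$ becomes $\sum_k a_k z^k = \sum_k \overline{a_k}\, z^{-k}$, and multiplying through by $z^{n-1}$ converts the difference of the two sides into a genuine polynomial $R(z) = \sum_{k=0}^{n-1} a_k z^{n-1+k} - \sum_{k=0}^{n-1} \overline{a_k}\, z^{n-1-k}$ of degree at most $2n-2$. This $R$ vanishes at the $2n$ distinct nonzero points $\alpha_i,\beta_j$, hence $R \equiv 0$; reading off the coefficients of $z^{n},\dots,z^{2n-2}$ forces $a_1 = \cdots = a_{n-1} = 0$, while the coefficient of $z^{n-1}$ gives $a_0 = \overline{a_0} \in \bR$. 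So $p$ is a real constant, $v = \ev_\alpha(p) \in \langle {\bf 1}\rangle$, and combining with the reverse inclusion noted above finishes the proof.

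I do not anticipate a genuine obstacle here; the only point demanding care is the passage from ``$p$ real on $S^1$'' to the reciprocal-type polynomial $R$, together with the degree count $2n-2 < 2n$ that makes the vanishing argument close. It is worth remarking that the cyclic ordering hypothesis plays no role in this proposition — only the distinctness of the $2n$ points and their lying on $S^1$ enter the argument.
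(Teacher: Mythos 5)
Your proof is correct and takes a genuinely different route from the paper's. The paper works with the real and imaginary parts $p_1,p_2 \in \bR[x,y]$ of $p(x+iy)$, invokes B\'ezout's theorem to compare the affine curves $p_2=0$ and $x^2+y^2-1=0$, uses irreducibility of the circle to conclude $p(S^1)\subset\bR$, and then appeals to the maximum principle (an analytic input) to force $p$ to be constant. You instead exploit $\overline{z}=z^{-1}$ on $S^1$ to convert the reality condition at the $2n$ points into the vanishing of a single-variable polynomial $R(z)=\sum_{k}a_k z^{n-1+k}-\sum_k\overline{a_k}z^{n-1-k}$ of degree at most $2n-2$; since it has $2n>2n-2$ distinct roots it is identically zero, and reading off the coefficients of $z^{n},\dots,z^{2n-2}$ kills $a_1,\dots,a_{n-1}$ while the coefficient of $z^{n-1}$ forces $a_0\in\bR$. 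This avoids both B\'ezout and the maximum principle, replacing them with the elementary count that a nonzero polynomial of degree $<2n$ cannot have $2n$ roots, and is arguably closer in spirit to the Vandermonde bookkeeping used elsewhere in the paper. Your closing observation is also accurate: the cyclic-ordering hypothesis is not used in either argument, only the distinctness of the $2n$ points and their membership in $S^1$.
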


The point of this proposition is the following corollary.

\begin{cor}
	\label{cor:clean}
		The diagonal loop $\Delta(\gamma) = \{ a \cdot {\bf 1} : a \in \gamma \}$ is a clean component of intersection between $\gamma^n$ and $F^\beta_\alpha(\gamma^n)$.
\end{cor}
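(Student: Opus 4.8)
The plan is to read \Cref{cor:clean} off from \Cref{prop:clean} by a direct tangent-space computation, supplemented by a short constant-rank argument. Two things must be checked: that along $\Delta(\gamma)$ the tangent spaces to $\gamma^n$ and $F^\beta_\alpha(\gamma^n)$ meet in exactly the tangent space to $\Delta(\gamma)$, and that $\Delta(\gamma)$ is an isolated component of $\gamma^n \cap F^\beta_\alpha(\gamma^n)$.

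First I would record that $\Delta(\gamma) \subseteq \gamma^n \cap F^\beta_\alpha(\gamma^n)$: for $a \in \gamma$ we have $a\cdot{\bf 1} \in \gamma^n$, and since $F^\beta_\alpha$ is $\bC$-linear with $F^\beta_\alpha({\bf 1}) = {\bf 1}$, also $F^\beta_\alpha(a\cdot{\bf 1}) = a\cdot{\bf 1}$, so $a\cdot{\bf 1} \in F^\beta_\alpha(\gamma^n)$. For the tangent condition, fix $p = a\cdot{\bf 1} \in \Delta(\gamma)$ and let $v \in \bC^\times$ span the real tangent line $T_a\gamma \subset \bC$. Then $T_p\gamma^n = (T_a\gamma)^n = v\cdot\bR^n$; since $(F^\beta_\alpha)^{-1}(p) = p$ and $F^\beta_\alpha$ is $\bC$-linear, $T_pF^\beta_\alpha(\gamma^n) = F^\beta_\alpha(T_p\gamma^n) = v\cdot F^\beta_\alpha(\bR^n)$. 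Intersecting these and applying the first bullet of \Cref{prop:clean},
\[ T_p\gamma^n \cap T_pF^\beta_\alpha(\gamma^n) = v\cdot(\bR^n \cap F^\beta_\alpha(\bR^n)) = v\cdot\langle{\bf 1}\rangle = T_p\Delta(\gamma), \]
which is exactly the clean-intersection condition along $\Delta(\gamma)$.

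It remains to see that no other intersection points accumulate on $\Delta(\gamma)$, equivalently that no non-constant inscriptions converge to constant ones. Parametrize $\gamma$ by an embedding $c\co \bR/\bZ \to \bC$, choose a function $\rho\co \bC \to \bR$ defined near $\gamma$ with $\gamma = \rho^{-1}(0)$ and $0$ a regular value, and on a neighborhood $U$ of the diagonal $D = \{\theta_1 = \dots = \theta_n\} \subset (\bR/\bZ)^n$ set $\Phi\co U \to \bR^n$, $\Phi(\theta) = (\rho(h_1(\theta)),\dots,\rho(h_n(\theta)))$, where $(h_1(\theta),\dots,h_n(\theta)) := F^\beta_\alpha(c(\theta_1),\dots,c(\theta_n))$. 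Then $\Phi^{-1}(0)$ is the part of $\gamma^n \cap F^\beta_\alpha(\gamma^n)$ near $\Delta(\gamma)$, and $D \subseteq \Phi^{-1}(0)$. A computation of the same flavour as above shows that for $\bar\theta \in D$ one has $\ker d\Phi_{\bar\theta} = \{u \in \bR^n : F^\beta_\alpha u \in \bR^n\}$, which equals $\langle{\bf 1}\rangle = T_{\bar\theta}D$ by both bullets of \Cref{prop:clean}. Hence $d\Phi_{\bar\theta}$ is injective on every complement of $T_{\bar\theta}D$, so $\Phi$ restricted to a slice through $\bar\theta$ transverse to $D$ is an immersion, hence locally injective; by compactness of $D$ these local neighborhoods can be chosen uniformly, and it follows that $\Phi^{-1}(0) = D$ on a neighborhood of $D$. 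Thus $\Delta(\gamma)$ is closed, open, and clean inside $\gamma^n \cap F^\beta_\alpha(\gamma^n)$, and Bott's lemma supplies the linear normal form needed for the Lagrangian surgery in \Cref{prop:conj_implies_conj} and the pearl complex in \Cref{sec:pinwheel}.

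I expect the tangent-space equality to be immediate once \Cref{prop:clean} is in hand; the only genuine work is this last step of isolating $\Delta(\gamma)$. It can be phrased equivalently as the remark that the only $p \in \bC[z]_{n-1}$ whose values on all of $Q$ lie on a common real line through the origin is constant — and this, via $\ev_\alpha$ and $\ev_\beta$, is just a restatement of the identity $\bR^n \cap F^\beta_\alpha(\bR^n) = \langle{\bf 1}\rangle$ from \Cref{prop:clean}, with the local immersion theorem providing the passage from the infinitesimal statement to the honest one.
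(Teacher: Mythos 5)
Your tangent-space computation is the same as the paper's and is correct: along $\Delta(\gamma)$ you identify $T_p\gamma^n = v\bR^n$, $T_pF^\beta_\alpha(\gamma^n) = vF^\beta_\alpha(\bR^n)$, and intersect using \Cref{prop:clean} (where, as you note, the second bullet $F^\beta_\alpha({\bf 1}) = {\bf 1}$ is quietly needed both to place $\Delta(\gamma)$ inside $F^\beta_\alpha(\gamma^n)$ and to identify the tangent space there). The paper stops at this point and declares $\Delta(\gamma)$ a clean component.

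The one place you diverge is the second half, where you treat the isolation of $\Delta(\gamma)$ in $\gamma^n \cap F^\beta_\alpha(\gamma^n)$ as a separate thing to be verified. It is not separate: if $C$ is a submanifold of $L_0 \cap L_1$ and $T_pC = T_pL_0 \cap T_pL_1$ holds at \emph{every} $p \in C$, then $C$ is automatically open in $L_0 \cap L_1$. This is a standard implicit-function-theorem fact (choose coordinates in which $C = \{b=c=d=0\}$, $L_0 = \{(a,b,f_1(a,b),g_1(a,b))\}$, $L_1 = \{(a,f_2(a,c),c,g_2(a,c))\}$; the clean condition at $(a,0,0,0)$ makes $I - \del_cf_2\,\del_bf_1$ invertible, and the equation $b = f_2(a,f_1(a,b))$ then has only the solution $b=0$). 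Your $\Phi$-argument is in effect a hands-on version of exactly this, so nothing you wrote is wrong — but it is work the tangent condition has already done for you, and it is why the paper's ``Hence'' is justified without any added constant-rank discussion. If you want to keep the second paragraph, frame it as an unpacking of that implication rather than as a genuinely additional hypothesis to be checked.
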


\begin{proof}
	Let $a \in \gamma$ and let $v \in T_a \gamma \subset T_a \bC \cong \bC$ denote a nonzero tangent vector.
	Then
	\[ T_{(a \cdot {\bf 1})}\gamma^n = \left\{ (\lambda_1 v, \ldots, \lambda_n v) : \lambda_i \in \bR, 1 \leq  i \leq n \right\} = v \bR^n \subset \bC^n \cong \oplus_{k=1}^n T_a \bC {\rm .} \]
	and
	\begin{align*}
		T_{(a \cdot {\bf 1})}F^\beta_\alpha(\gamma^n) &= (F^\beta_\alpha)_* T_{(a \cdot {\bf 1})}(\gamma^n) = (F^\beta_\alpha)_* (v \bR^n) = v (F^\beta_\alpha)_* (\bR^n)  \subset \bC^n \cong \oplus_{k=1}^n T_a \bC {\rm .}
	\end{align*}
	The differential of a linear map agrees with the linear map after identifying the tangent spaces with the underlying ambient space.
	Since $F^\beta_\alpha(\bR^n) \cap \bR^n = \langle {\bf 1} \rangle$, it follows that
	\[
	T_{(a \cdot {\bf 1})} \gamma^n \cap T_{(a \cdot {\bf 1})} F^\beta_\alpha(\gamma^n) = v(\bR^n \cap F^\beta_\alpha(\bR^n)) = v \langle {\bf 1} \rangle = T_{(a \cdot {\bf 1})} \Delta(\gamma).
	\]
	Hence $\Delta(\gamma)$ is a clean component of intersection of $\gamma^n \cap F^\beta_\alpha(\gamma^n)$.
\end{proof}

\begin{proof}[Proof of \Cref{prop:clean}]
	Suppose that
	\[
	v \in \bR^n \quad \textup{and} \quad F_\alpha^\beta(v) = w \in \bR^n.
	\]
	Then there exists a polynomial $p \in \bC[z]_{n-1}$ such that
	\[
	\mathrm{ev}_\alpha(p) = v \quad \textup{and} \quad \mathrm{ev}_\beta(p) = w.
	\]
	In particular, $p$ maps the $2n$ distinct values $\alpha_1,\beta_1,\dots,\alpha_n,\beta_n \in S^1$ to $\bR$.
	Write $p(x+iy) = p_1(x,y) + i p_2(x,y)$ with $x,y$ real variables, $p_1,p_2 \in \bR[x,y]$, and $\deg p_1, \deg p_2 \le n-1$.
	The points $x+iy$ which get sent by $p$ to $\bR$ satisfy the polynomial equation $p_2(x,y) = 0$.
	The points on $S^1$ satisfy $x^2 + y^2 - 1 = 0$.
	These are two polynomial equations in two indeterminates of respective degrees $\le n-1$ and 2.
	Hence the common solution set has cardinality $\le 2 (n-1)$ or else contains a component of positive dimension of the individual solution sets.
	Here we invoke B\'ezout's theorem, which for plane curves was already known to Newton \cite{bezout,newton}.
	Since $p$ maps $2n$ distinct points on $S^1$ to $\bR$, the second case must occur.
	Since $x^2+y^2-1$ is irreducible, this implies that $p(S^1) \subset \bR$.
	Since $p$ is holomorphic, the maximum principle implies that $p(D^2)$ is contained in the region bounded by $p(S^1) \subset \bR$.
	This implies that $p$ is constant: $p = \lambda \in \bR$.
	Hence $v=w= \lambda \cdot {\bf 1}$, which completes the proof.
\end{proof}

\subsection{Lagrangian smoothing and \Cref{prop:conj_implies_conj}.}
We now wish to consider the result of a Lagrangian smoothing of $\gamma^n \cup F^\beta_\alpha(\gamma^n)$ along the circle of clean intersection $\Delta(\gamma)$.

\begin{lem}
	\label{lem:lag_surg_is_immersion}
	The Lagrangian smoothing of $\gamma^n \cup F^\beta_\alpha(\gamma^n)$ along $\Delta(\gamma)$ is a Lagrangian immersion of $S^1 \times (\mathbb{T}^{n-1} \# \mathbb{T}^{n-1})$ into $\bC^n$.
\end{lem}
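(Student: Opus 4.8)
The plan is to analyze the Lagrangian smoothing locally along $\Delta(\gamma)$ and then globally. Recall that $\gamma^n$ and $F^\beta_\alpha(\gamma^n)$ meet cleanly along the circle $\Delta(\gamma)$ by \Cref{cor:clean}, and along $\Delta(\gamma)$ the tangent spaces span a rank-$(2n-1)$ subspace at each point, with intersection $v\langle\mathbf 1\rangle$. The standard local model for Lagrangian surgery along a clean circle of intersection (Polterovich's surgery, in the clean-intersection version worked out by Pozniak and others) says the following: near each point of $\Delta(\gamma)$ there is a Darboux chart in which $\gamma^n$ looks like $\bR^n$, $F^\beta_\alpha(\gamma^n)$ looks like $i\bR^{n-1}\times\bR$ (meeting $\bR^n$ in the last coordinate line), and the smoothing replaces the union by the product of the handle-attaching model in the first $n-1$ coordinates with $\bR$ in the last coordinate. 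Consequently the underlying smooth manifold of the smoothing is an $S^1$-bundle, with fiber circle coming from the clean direction $\Delta(\gamma)$, over the manifold obtained by smoothing $\mathbb T^{n-1}\cup\mathbb T^{n-1}$ at a single transverse point — which is the connected sum $\mathbb T^{n-1}\#\mathbb T^{n-1}$.

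First I would set up the local picture precisely. Using \Cref{prop:clean}, pick Darboux coordinates $(z_1,\dots,z_n)$ near a point $a\cdot\mathbf 1\in\Delta(\gamma)$ adapted so that the $z_n$-axis is the clean direction and $\gamma^n,F^\beta_\alpha(\gamma^n)$ are linearized to transverse Lagrangian planes in the $(z_1,\dots,z_{n-1})$-factor times a common $\bR$ in the $z_n$-factor; here one uses that $F^\beta_\alpha$ is linear and fixes $\mathbf 1$, so $(F^\beta_\alpha)_*$ preserves the splitting induced by the clean locus. Then the Lagrangian surgery is performed using a handle supported in a neighborhood of $\Delta(\gamma)$ and constant in the $z_n$-direction, so that locally the result is $(\text{Lagrangian handle in }\bC^{n-1})\times(\bR\subset\bC)$. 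Globally, running this surgery along the whole circle $\Delta(\gamma)$ produces a Lagrangian $L$ and a smooth submersion $L\to\Delta(\gamma)\cong S^1$ whose fibers are all diffeomorphic to the local smoothing of $\mathbb T^{n-1}\sqcup\mathbb T^{n-1}$ at one point.

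Next I would identify the total space. The two sheets $\gamma^n$ and $F^\beta_\alpha(\gamma^n)$ are each $\gamma\times\mathbb T^{n-1}$-like: more precisely each is the image of an embedding of $S^1\times\mathbb T^{n-1}$ with the $S^1$ being exactly $\Delta(\gamma)$ up to the linear identification (for $\gamma^n$ this is clear; for $F^\beta_\alpha(\gamma^n)$ use that $F^\beta_\alpha$ is a linear isomorphism carrying $\Delta(\gamma)$ to itself). So removing a tubular neighborhood of $\Delta(\gamma)$ from each and gluing in the handle $\times\,S^1$ realizes $L$ as the fiberwise connected sum over $S^1$ of two copies of the trivial bundle $S^1\times\mathbb T^{n-1}$. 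Because the connected-sum operation is performed along a section (the constant-$z_n$ choice makes the gluing region a product), the resulting $S^1$-bundle is trivial, hence $L\cong S^1\times(\mathbb T^{n-1}\#\mathbb T^{n-1})$. The Lagrangian condition is immediate since each local surgery handle is Lagrangian by construction and the rest of $L$ is contained in the Lagrangians $\gamma^n$, $F^\beta_\alpha(\gamma^n)$.

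The main obstacle is making the ``constant in the $z_n$-direction'' claim rigorous: one must check that the clean-intersection surgery can be carried out $S^1$-equivariantly along $\Delta(\gamma)$ — i.e. that the normal symplectic data of the clean locus is a trivial bundle over $S^1$ so that a single handle profile works uniformly — and that the smooth type of the fiber is genuinely $\mathbb T^{n-1}\#\mathbb T^{n-1}$ and not some twisted version. For the first point I would invoke the symplectic normal-form theorem for clean intersections (the normal bundle of $\Delta(\gamma)$ in each Lagrangian is orientable, being a subbundle of $T\gamma^n$, so the relevant frame bundle over $S^1$ is trivial), and for the second point the observation that the local model of smoothing two transverse Lagrangian planes $\bR^{n-1}$ and (a transverse plane) in $\bC^{n-1}$ along a point is exactly the standard Lagrangian surgery whose output is, topologically, the connected sum of the two sheets. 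Everything else — that the pieces glue to a smooth Lagrangian, and that the Maslov index of the $S^1$-fiber is $2n$ as asserted in the summary — is a routine computation once the local model and the triviality of the $S^1$-bundle are in hand.
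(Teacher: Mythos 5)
Your approach follows the same overall route as the paper: use Pozniak's clean-intersection normal form to model the smoothing as a fiberwise Lagrangian connected sum along $\Delta(\gamma)$, and then argue that the resulting $S^1$-bundle over $\Delta(\gamma)$ is trivial. You also correctly flag the crux that needs justification, namely that the fiberwise connected sum is untwisted and not some mapping torus with nontrivial monodromy.

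However, the argument you offer to resolve this twist does not work. You reason that since the normal bundle of $\Delta(\gamma)$ in each torus is orientable, it is trivial as a bundle over $S^1$, and hence ``a single handle profile works uniformly.'' Triviality of each individual normal bundle is automatic (any orientable bundle over $S^1$ is trivial) and is not the issue. What governs the diffeomorphism type of the smoothing is the gluing map between $\gamma^n \setminus \nu(\Delta(\gamma))$ and $F^\beta_\alpha(\gamma^n)\setminus\nu(\Delta(\gamma))$, which the geometry prescribes: it compares a framing $\cB_a$ of $T_{a\cdot\mathbf 1}\gamma^n / T_{a\cdot\mathbf 1}\Delta(\gamma)$ with the pullback under $(F^\beta_\alpha)^{-1}$ of its symplectic-dual framing $\cC_a$ of $T_{a\cdot\mathbf 1}F^\beta_\alpha(\gamma^n) / T_{a\cdot\mathbf 1}\Delta(\gamma)$. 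This comparison is a loop in $\mathrm{GL}^+_{n-1}(\bR)\simeq \mathrm{SO}_{n-1}(\bR)$, and its class in $\pi_1(\mathrm{SO}_{n-1}(\bR))$ is a genuine invariant of the pair of Lagrangians; it cannot be normalized away merely by rechoosing $\cB_a$. (This is exactly the ambiguity the paper flags: infinitely many a priori outcomes when $n=3$, since $\pi_1(\mathrm{SO}_2)\cong\bZ$, and two when $n\geq4$. For $n=3$ a nontrivial class would produce the mapping torus of a separating Dehn twist on $\Sigma_2$, which is not $S^1\times\Sigma_2$.) So ``orientable hence trivial'' cannot by itself deliver the conclusion, since that reasoning is insensitive to the class of the gluing loop. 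The missing step is the concrete computation the paper performs: choose the moving basis $\cB_s = \gamma'(s)\cB_0$ and use that $F^\beta_\alpha$ is $\bC$-linear while $\psi^\alpha_\beta$ is a positive real diagonal form, which forces $\cC_s = \gamma'(s)\cC_0$ and hence makes the comparison loop literally constant. Your proposal should be revised to include such a computation of the gluing loop, rather than appealing to orientability of the normal bundle.
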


\begin{proof}
	The result of the smoothing will be an immersion of a manifold obtained by splicing together two $n$-tori which have had tubular neighborhoods of circle fibers removed.  Precisely, an immersion of
	\[ {\T}^n \setminus ({\T}^1 \times D^{n-1}) \cup_\phi {\T}^n \setminus (\T^1 \times D^{n-1}) \]
	where $\T^1$ is a circle factor of $\T^n$ and $\phi$ is a gluing map given by a smooth choice of identifications ${\rm SO}_{n-1}(\bR) \ni \phi_p  \colon \{ p \} \times S^{n-2} \rightarrow \{ p \} \times S^{n-2}$ for $p \in \T^1$.  So there are \emph{a priori} infinitely many possibilities for the result of the smoothing for $n=3$, and two possibilities for each $n \geq 4$.  We wish to verify that $\phi_p$ is a null-homotopic loop in ${\rm SO}_{n-1}(\bR)$ to conclude that the smoothing is an immersion of $S^1 \times (\mathbb{T}^{n-1} \# \mathbb{T}^{n-1})$.
	
	Pozniak's thesis provides a local picture for the neighbourhood of a clean intersection \cite{pozniak}.  In our case, when the clean intersection is topologically a circle, the local picture is the manifold
	\[ M = S^1 \times (-1,1) \times (-1,1)^{n-1} \times (-1,1)^{n-1} \]
	with coordinates $s\in \bR/\bZ,t, x_1, \ldots x_{n-1}, y_1, \ldots, y_{n-1}$ and symplectic form
	\[ \omega = ds \wedge dt + dx_1 \wedge dy_1 + \cdots dx_{n-1} \wedge dy_{n-1} {\rm ,} \]
	which contains two Lagrangians
	\[ L_X = \{ p \in M : t(p) = y_1(p) = \cdots = y_{n-1}(p) = 0 \}, L_Y = \{ p \in M : t(p) = x_1(p) = \cdots = x_{n-1}(p) = 0 \} \]
	cleanly intersecting along the central circle of $M$.
	
	We note that for $p \in M$ the images of $\{ \frac{\partial}{\partial x_i} : 1 \leq i \leq n-1 \}$ and of $\{ \frac{\partial}{\partial y_i} : 1 \leq i \leq n-1 \}$ are dual bases of transverse Lagrangian planes inside $T_p M / \langle \frac{\partial}{\partial s} , \frac{\partial}{\partial t} \rangle$ with respect to the induced symplectic form on the quotient.
	
	For each $\lambda \in S^1$, we see two transversely intersecting Lagrangians with respect to this quotient form
	\[ L_{X,\lambda}, L_{Y,\lambda} \subset M_\lambda := \{ p \in M : s(p) = \lambda, t(p) = 0 \} \]
	where $L_{X,\lambda} = L_X \cap M_\lambda$ and $L_{Y,\lambda} = L_Y \cap M_\lambda$.  The smoothing of the clean intersection is performed by making the same smoothing of each of these transverse intersections as $\lambda$ varies over $S^1$ (how one smooths a transverse intersection is described by Polterovich \cite{polterovich_smoothing}).  This discussion indicates how to construct our reference loop in $\mathrm{SO}_{n-1}(\bR)$, which we now proceed to do.
	
	For each $a \in \gamma$, choose an ordered basis $\cB_a$ of $T_{a \cdot {\bf 1}} \gamma^n / T_{a \cdot {\bf 1}} \Delta(\gamma)$.  With respect to the symplectic form induced by $\psi^\alpha_\beta$ on the quotient $T_{a \cdot {\bf 1}}\bC^n/T_{a \cdot {\bf 1}} \Delta(\bC)$, we let $\cC_a$ be the ordered basis of $T_{a \cdot {\bf 1}}F^\beta_\alpha(\gamma^n) / T_{a \cdot {\bf 1}} \Delta(\gamma)$ that is dual to $\cB_a$.
	Then $(F^\beta_\alpha)^{-1}_* \cC_a = (F^\beta_\alpha)^{-1} \cC_a$ gives another basis for $T_{a \cdot {\bf 1}} \gamma^n / T_{a \cdot {\bf 1}} \Delta(\gamma)$ (here we have abused notation by writing $(F^\beta_\alpha)^{-1}$ for the linear map induced by $(F^\beta_\alpha)^{-1}$ on a subquotient).  By comparing these bases we get a loop of elements of ${\rm GL}_{n-1}(\bR)$ parametrized by $a \in \gamma$.  By noting that orientations must be preserved we thus get a loop in ${\rm SO}_{n-1}(\bR)$ following a deformation retraction.
	
	We next proceed to make good choices of $\cB_a$ to show that this resulting loop is contractible.
	
	We first choose a unit-speed parametrisation of $\gamma$, making the abuse of notation between $\gamma$ and its parametrization $\gamma \colon \bR \rightarrow \bC$, and arrange that $\gamma'(0) = 1 \in \bC = T_{\gamma(0)} \bC$.  Then we choose a basis $\cB_0$ for $T_{\gamma(0) \cdot {\bf 1}} \gamma^n / T_{\gamma(0) \cdot {\bf 1}} \Delta(\gamma)$.  Noting that multiplication by non-zero complex numbers gives real-linear automorphisms of $\bC^n$, for each $s \in \bR$ we take the basis $\cB_s = \gamma'(s)\cB_0$ of $T_{\gamma(s) \cdot {\bf 1}} \gamma^n / T_{\gamma(s) \cdot {\bf 1}} \Delta(\gamma)$.  As above, we write $\cC_s$ for the corresponding dual basis of $T_{\gamma(s) \cdot {\bf 1}}F^\beta_\alpha(\gamma^n) / T_{\gamma(s) \cdot {\bf 1}} \Delta(\gamma)$.  Note that $\gamma'(s) \cC_0$ is also a basis for $T_{\gamma(s) \cdot {\bf 1}}F^\beta_\alpha(\gamma^n) / T_{\gamma(s) \cdot {\bf 1}} \Delta(\gamma)$ by complex-linearity of $F^\beta_\alpha$.  Indeed, since $\psi^\alpha_\beta$ is real diagonal, we see that $\gamma'(s) \cC_0$ is dual to $\gamma'(s) \cB_0 = \cB_s$, so that we have $\cC_s = \gamma'(s) \cC_0$.

	Finally we obtain the loop in ${\rm GL}_{n-1}(\bR)$ by comparing the bases $\cB_s = \gamma'(s) \cB_0$ with the bases
	\[ (F^\beta_\alpha)^{-1}(\cC_s) = (F^\beta_\alpha)^{-1}(\gamma'(s) \cC_0) = \gamma'(s)(F^\beta_\alpha)^{-1}(\cC_0) {\rm .} \]
	But this just gives a constant loop, and in particular a contractible loop.
\end{proof}

We are now ready to conclude \Cref{prop:conj_implies_conj}.

\begin{proof}[Proof of \Cref{prop:conj_implies_conj}.]
	We have seen that points of $(\gamma^n \cap F^\beta_\alpha(\gamma^n)) \setminus \Delta(\gamma)$ correspond to inscriptions $p \in \bC[z]_{n-1}^*$ of $\{\alpha_1, \ldots, \alpha_n, \beta_1, \ldots, \beta_n \}$ in $\gamma$ (\Cref{cor:off-diagonal_intersections_are_kewl}).  \Cref{lem:lag_surg_is_immersion} establishes that these points correspond to self-intersections of a Lagrangian immersion of $S^1 \times (\mathbb{T}^{n-1} \# \mathbb{T}^{n-1})$ into $\bC^n$.
	
	In the homology of the torus $\gamma^n$, the loop $\Delta(\gamma)$, when oriented appropriately, represents the element
	\[ (1,1, \ldots, 1) \in  \bigoplus_{i=1}^n H_1(\gamma, \bZ) = H_1(\gamma^n, \bZ) \]
	via the K{\"u}nneth identification.  Since the Maslov index of a loop depends only on its homology class, we conclude that the Maslov index of $\Delta(\gamma)$ is $2n$ with respect to $\gamma^n$.
	
	Thus the circle factor of $S^1 \times (\mathbb{T}^{n-1} \# \mathbb{T}^{n-1})$ is of Maslov index $2n$.  Since \Cref{conj:maslov2} tells us that this loop is of Maslov index $2$ when the immersion is an embedding, it must be that the immersion has points of self-intersection.
\end{proof}

\newpage
\section{Pinwheels.}
\label{sec:pinwheel}
This section establishes \Cref{prop:pinwheel}, which concerns polynomial inscriptions of pinwheels.
\Cref{sec:pinwheeltori} establishes nice features for the pair of parametrizing tori for a pinwheel.
In particular, their Floer homology is unobstructed.
We then explain how \Cref{prop:pinwheel} follows from a computation in pearl Floer homology (\Cref{prop:pinwheel_homology_does_not_vanish}).
\Cref{sec:pearlreview} reviews the construction of the pearl Floer homology of a pair of cleanly intersecting Lagrangians.
\Cref{sec:pearlpinwheel} then specializes the construction to the case of the pinwheel tori.
It concludes with the proof of \Cref{prop:pinwheel_homology_does_not_vanish}.

\subsection{Pinwheel tori.}
\label{sec:pinwheeltori}
We specialize the construction of \Cref{sec:dud2} to the case of pinwheels.
Fix $n \ge 2$.

\begin{defin}
	\label{defin:pinwheel_vertices}
	Let $\omega = e^{2i\pi/n}$ be a primitive $n$th root of unity.  For $1 \leq j \leq n$ and $0 < \theta < 2\pi/n$ we define
	\[ \alpha_j = \omega^j \,\,\, {\rm and} \,\,\, \beta_j = e^{\theta\sqrt{-1}}\omega^j \]
	to be the vertices of the $\theta$\emph{-pinwheel} $Q_\theta$.
\end{defin}

\noindent
Thus, $Q_\theta$ is a rectangle whose diagonals meet at angle $\theta$ in the case $n=2$.
Since $F^\beta_\alpha$ only depends on $\theta$, we write $F^\beta_\alpha = F_\theta$.
The cyclic group $\bZ/n$ acts on $\bC^n$ by cyclic permutation of the coordinates, and the action preserves $\omegast$.

Pinwheel tori have the following nice features:

\begin{prop}
	\label{lem:pinwheels_use_standard_form}
	The tori $\gamma^n$ and $F_\theta(\gamma^n)$ are Lagrangian, monotone, $\bZ/n$-equivariant, and Hamiltonian isotopic in $(\bC^n,\omegast)$.
\end{prop}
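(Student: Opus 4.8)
The plan is to extract everything from one observation: for a pinwheel, $F_\theta$ is a \emph{unitary} matrix. This is the one place the equal spacing of the $\alpha_j$ and of the $\beta_j$ is used in an essential way, and it is the step I regard as the real content of the proposition; the rest is formal. Concretely, since $\alpha_j=\omega^j$ with $\omega$ a primitive $n$th root of unity, the rows of the Vandermonde matrix $V^\alpha$ are pairwise orthogonal, each of norm $\sqrt n$ (the orthogonality relations for characters of $\bZ/n$), and the same holds for $V^\beta$ because $|\beta_j|=1$ and the $\beta_j$ are again equally spaced. Hence $\tfrac1{\sqrt n}V^\alpha,\tfrac1{\sqrt n}V^\beta\in U(n)$, so $F_\theta=V^\beta(V^\alpha)^{-1}\in U(n)\subseteq\mathrm{Sp}(2n,\R)$.

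Granting this, I would deduce the Lagrangian and monotone claims at once. The torus $\gamma^n$ is Lagrangian in $(\bC^n,\omegast)$ because $\gamma\subset\bC$ is, and it is monotone: $\gamma$ encloses a region of area $A>0$, giving a disk of symplectic area $A$ and Maslov index $2$, and since $\bC^n$ is simply connected $\pi_2(\bC^n,\gamma^n)\cong H_1(\gamma^n;\bZ)\cong\bZ^n$ is generated by the $n$ such disk classes, whence $\omegast=\tfrac A2\mu$ on $\pi_2(\bC^n,\gamma^n)$. Because $F_\theta$ is a linear symplectomorphism of $(\bC^n,\omegast)$, it preserves $\omegast$ and the Maslov class, so it carries $\gamma^n$ to the Lagrangian monotone torus $F_\theta(\gamma^n)$ with the same monotonicity constant. (Alternatively, one can check that the cyclic symmetry forces the form $\psi^\alpha_\beta$ of \Cref{prop:simultaneously_lagrangian} to be a positive multiple of $\omegast$ and appeal to \Cref{cor:simultaneously_lagrangian}.)

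For $\bZ/n$-equivariance I would let $P$ be the generator of the cyclic action on $\bC^n$ and $R\colon\bC[z]_{n-1}\to\bC[z]_{n-1}$, $R(p)(z)=p(\omega z)$. Using $\alpha_j=\omega^j$, $\beta_j=e^{\theta\sqrt{-1}}\omega^j$ and $\omega^n=1$, one checks that $\ev_\alpha\circ R=P\circ\ev_\alpha$ and $\ev_\beta\circ R=P\circ\ev_\beta$, and hence $F_\theta\circ P=\ev_\beta\circ R\circ\ev_\alpha^{-1}=P\circ F_\theta$. Since $P(\gamma^n)=\gamma^n$ trivially, it follows that $P(F_\theta(\gamma^n))=F_\theta(P(\gamma^n))=F_\theta(\gamma^n)$, so both tori are $\bZ/n$-invariant.

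Finally, for the Hamiltonian isotopy I would use that $U(n)$ is path-connected: choose a smooth path $t\mapsto G_t\in U(n)$ with $G_0=\mathrm{id}$ and $G_1=F_\theta$. Each generating vector field $\dot G_t\circ G_t^{-1}$ is a linear symplectic vector field, hence the Hamiltonian vector field of a quadratic function, so $\{G_t\}$ is a Hamiltonian isotopy of $(\bC^n,\omegast)$, and its time-$1$ map sends $\gamma^n$ to $F_\theta(\gamma^n)$. Taking instead $G_t=F_{t\theta}$ (extended by $F_0:=\mathrm{id}$, the limit of $F_{\theta'}$ as $\theta'\to0^+$) keeps every $G_t$ commuting with $P$, so the isotopy can be taken $\bZ/n$-equivariant if desired. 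The only non-routine input throughout is the unitarity of $F_\theta$; once it is in hand the four assertions are bookkeeping.
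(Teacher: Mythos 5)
Your proof is correct, and it shares the paper's essential input — that $\tfrac{1}{\sqrt n}V^\alpha$ (and here also $\tfrac{1}{\sqrt n}V^\beta$) is unitary by character orthogonality — but it organizes the consequences differently. The paper factors $V^\beta = V^\alpha D_\theta$ with $D_\theta$ the diagonal matrix $\mathrm{diag}(1,e^{i\theta},\dots,e^{(n-1)i\theta})$, conjugates by the discrete Fourier transform to realize $\theta\mapsto F_\theta$ as an explicit Hamiltonian circle action on $(\bC^n,\omegast)$, and obtains $\bZ/n$-equivariance by combining this with the observation that $F_{2\pi/n}$ is the cyclic coordinate shift (so $F_\theta$ and $F_{2\pi/n}$ commute as time-flows of the same Hamiltonian). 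You instead conclude $F_\theta\in U(n)\subset\mathrm{Sp}(2n,\bR)$ directly, invoke path-connectedness of $U(n)$ plus the soft fact that a smooth path in $\mathrm{Sp}(2n,\bR)$ from the identity is a Hamiltonian isotopy with quadratic generating Hamiltonians, and establish equivariance by intertwining $\ev_\alpha,\ev_\beta$ with the rotation operator $R(p)(z)=p(\omega z)$ on $\bC[z]_{n-1}$. Both routes are valid; the paper's factorization $V^\beta = V^\alpha D_\theta$ is tighter and makes the circle action transparent (which is why it is stated as a standalone lemma), while your version trades that structure for more elementary general facts and a direct, Hamiltonian-free verification of the $\bZ/n$-symmetry. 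Your alternative choice $G_t=F_{t\theta}$ for the isotopy in fact recovers the paper's circle action implicitly, so the two arguments are closer than they first appear.
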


We build up to its proof after two lemmas.

\begin{lem}
	\label{lem:circleaction}
	The isotopy
	\[
	\bR / (2 \pi \bZ) \longrightarrow \mathrm{GL}_n(\bC) : \theta \longmapsto F_\theta
	\]
	defines a Hamiltonian circle action of $(\bC^n,\omegast)$.
\end{lem}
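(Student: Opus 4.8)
The plan is to exhibit $\theta \mapsto F_\theta$ explicitly as the time-$\theta$ flow of a Hamiltonian function on $(\bC^n, \omegast)$, and the key observation is that $F_\theta$ acts very simply in the polynomial model. Recall $F_\theta = \ev_\beta \circ \ev_\alpha^{-1}$ where $\alpha_j = \omega^j$ and $\beta_j = e^{i\theta}\omega^j$. A polynomial $p(z) = \sum_{k=0}^{n-1} a_k z^k$ has $p(\beta_j) = \sum_k a_k e^{ik\theta} \omega^{jk} = q(\alpha_j)$ where $q(z) = \sum_k a_k e^{ik\theta} z^k$. Therefore, in the coordinates $(a_0, \ldots, a_{n-1})$ on $\bC[z]_{n-1} \cong \bC^n$, the map $F_\theta$ is simply the diagonal map $(a_0, \ldots, a_{n-1}) \mapsto (a_0, e^{i\theta} a_1, e^{2i\theta} a_2, \ldots, e^{i(n-1)\theta} a_{n-1})$. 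This is manifestly a linear circle action (periodic of period $2\pi$), diagonalized with integer weights $0, 1, \ldots, n-1$.

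First I would transport the standard symplectic form from the coordinate $\bC^n$ (the target of $\ev_\alpha$) back to the polynomial model; equivalently, work with $\psi^\alpha_\beta$ from \Cref{prop:simultaneously_lagrangian} pulled back via $\ev_\alpha$ to $\bC[z]_{n-1}$, which is a positive real diagonal form $\sum_k \mu_k \, \frac{\sqrt{-1}}{2}\, da_k \wedge d\overline{a_k}$ with $\mu_k > 0$. Since the circle action $a_k \mapsto e^{ik\theta} a_k$ is by rotations in each coordinate plane, it preserves any diagonal form, in particular preserves this symplectic form. A diagonal rotation action $(a_k) \mapsto (e^{ik\theta}a_k)$ on $(\bC^n, \sum_k \mu_k \,\frac{\sqrt{-1}}{2}\, da_k \wedge d\overline{a_k})$ is Hamiltonian with moment map $H(a) = -\tfrac{1}{2}\sum_{k=0}^{n-1} k\,\mu_k |a_k|^2$ (up to sign and additive constant), since the standard rotation $a \mapsto e^{i\theta}a$ in a single weighted plane $\mu\,\frac{\sqrt{-1}}{2}\,da\wedge d\overline a = \mu\,dx\wedge dy$ is generated by $-\tfrac{\mu}{2}(x^2+y^2)$. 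Summing the weighted contributions gives the Hamiltonian generating $\theta \mapsto F_\theta$.

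The remaining bookkeeping is to transfer this back across $\ev_\alpha$ and across the diagonal symplectomorphism relating $\psi^\alpha_\beta$ to $\omegast$ (noted right after \Cref{cor:simultaneously_lagrangian}): conjugating a Hamiltonian flow by a symplectomorphism yields a Hamiltonian flow (with Hamiltonian precomposed by the symplectomorphism), so $\theta \mapsto F_\theta$ on $(\bC^n, \omegast)$ is the flow of $H \circ \Phi^{-1}$ for the appropriate linear $\Phi$. I would also note the periodicity $F_{\theta + 2\pi} = F_\theta$ is immediate from the $e^{ik\theta}$ formula, so this is a genuine circle action rather than merely an $\bR$-action. The main obstacle — really the only subtlety — is making sure the identification of $F_\theta$ with the weighted diagonal action is set up in the right coordinates: one must check that $\ev_\alpha$ intertwines the circle action $a \mapsto (e^{ik\theta}a_k)$ on $\bC[z]_{n-1}$ with $F_\theta$ on $\bC^n$, which is exactly the computation $p(\beta_j) = q(\alpha_j)$ above, and that the form $\ev_\alpha^*\psi^\alpha_\beta$ really is diagonal, which follows since $\psi^\alpha_\beta$ is diagonal and $\ev_\alpha$ acts via the Vandermonde matrix $V^\alpha$ — wait, this needs a moment of care, since $\ev_\alpha^*$ of a diagonal form need not be diagonal for a general $\alpha$. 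Here it works because $\psi^\alpha_\beta$ was constructed in \Cref{prop:simultaneously_lagrangian} precisely so that $\ev_\alpha^* \psi^\alpha_\beta = \ev_\beta^*\psi^\beta_\alpha$ has a controlled form; alternatively, and more cleanly, I would simply observe directly in the polynomial coordinates that $F_\theta$ is the weighted rotation, pick \emph{any} positive diagonal form on those coordinates (e.g. the standard one) for which this rotation is Hamiltonian, and then push forward to $(\bC^n, \omegast)$ via $\ev_\alpha$ composed with the diagonalizing map — the point being only that some positive diagonal form is preserved, which upgrades the $\bR$-action to a Hamiltonian circle action, and all positive diagonal forms on $\bC^n$ are linearly symplectomorphic to $\omegast$.
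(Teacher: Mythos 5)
Your opening move is exactly the paper's: in the polynomial coordinates $(a_0,\dots,a_{n-1})$ the map $F_\theta$ becomes the weighted rotation $(a_k)\mapsto(e^{ik\theta}a_k)$, i.e.\ $V^\beta = V^\alpha D_\theta$, and this weighted rotation is periodic and Hamiltonian for any positive diagonal form on the polynomial side. That part is correct and matches the paper.

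The gap is in the transfer back to $(\bC^n,\omegast)$. What you need is that \emph{the specific map} $F_\theta$ is Hamiltonian for \emph{the specific form} $\omegast$, not merely that it is Hamiltonian for \emph{some} symplectic form on $\bC^n$. Your fallback — ``pick any positive diagonal form, push forward via $\ev_\alpha$ composed with the diagonalizing map, and use that all positive diagonal forms are linearly symplectomorphic to $\omegast$'' — does not close this: conjugating by the diagonalizing symplectomorphism $\Phi$ replaces $F_\theta$ by $\Phi F_\theta \Phi^{-1}$, so you have shown a \emph{conjugate} of $F_\theta$ is Hamiltonian for $\omegast$, not $F_\theta$ itself. (Your first route via $\ev_\alpha^*\psi^\alpha_\beta$ has a similar issue, as you yourself flag: the pullback of a diagonal form under $\ev_\alpha$ is not diagonal in general, and more importantly $\psi^\alpha_\beta$ is a priori only \emph{proportional} to $\omegast$ for pinwheels, which you would have to establish separately.) The missing ingredient — and the one pinwheel-specific fact that powers the whole argument — is that for $\alpha_j = \omega^j$ a primitive root of unity, the Vandermonde matrix $V^\alpha$ is $\sqrt{n}$ times a unitary matrix (the discrete Fourier transform). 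Hence $\tfrac{1}{\sqrt n}V^\alpha$ is a symplectomorphism of $(\bC^n,\omegast)$, so $F_\theta = (\tfrac{1}{\sqrt n}V^\alpha)\, D_\theta\, (\tfrac{1}{\sqrt n}V^\alpha)^{-1}$ is the conjugate of the Hamiltonian circle action $D_\theta$ by an $\omegast$-symplectomorphism, and is therefore itself a Hamiltonian circle action of $(\bC^n,\omegast)$. With that observation inserted, your proof becomes the paper's.
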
	

\begin{proof}
By definition, the map $F_\theta$ factors as $V^\beta (V^\alpha)^{-1}$.
By inspection, $V^\beta = V^\alpha D_\theta$, where $D_\theta$ denotes the diagonal matrix whose $k$-th diagonal entry equals $e^{(k-1) \theta \sqrt{-1}}$.
The map $D_\theta$ is the time-$\theta$ flow of the Hamiltonian
\[
h \colon (\bC^n,\omegast) \longrightarrow \bR \colon (z_1,\dots,z_n) \longmapsto -\frac12 \sum_{k=1}^n (k-1) |z_k|^2.
\]
Hence $\theta \longmapsto \phi^\theta_h = D_\theta$ defines a Hamiltonian circle action of $\bC^n$.
The map $\frac1{\sqrt{n}} V^\alpha$ is a unitary transformation of $\bC^n$ and hence a symplectomorphism of $(\bC^n,\omegast)$.
(Incidentally, it is the discrete Fourier transform.)
Since it conjugates $D_\theta$ into $F_\theta$, it follows that
$F_\theta$ is the time-$\theta$ flow of the Hamiltonian $H := h \circ (\frac1{\sqrt{n}} V^\alpha)^{-1} : (\bC^n,\omegast) \to \bR$.
Hence $\theta \longmapsto \phi^\theta_H = F_\theta$ defines a Hamiltonian circle action.
\end{proof}

\begin{lem}
	\label{lem:shift}
	The map $F_{2\pi/n}$ acts by a single cyclic shift of the coordinates of $\bC^n$:
	\[
	F_{2\pi/n}(z_1,z_2,\dots,z_n) = (z_2,\dots,z_n,z_1), \quad \forall \, (z_1,\dots,z_n) \in \bC^n.
	\]
\end{lem}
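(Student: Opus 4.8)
The plan is to unwind the definitions of the pinwheel vertices and of $F_\theta$. At $\theta = 2\pi/n$ we have $e^{\theta\sqrt{-1}} = e^{2\pi\sqrt{-1}/n} = \omega$, so by \Cref{defin:pinwheel_vertices} the $\beta$-points of the $\theta$-pinwheel are $\beta_j = \omega \cdot \omega^j = \omega^{j+1}$. Reading indices modulo $n$ and using $\omega^n = 1$, this says $\beta_j = \alpha_{j+1}$ for $1 \le j \le n-1$ while $\beta_n = \omega^{n+1} = \omega = \alpha_1$. In other words, the tuple $\beta = (\beta_1,\dots,\beta_n)$ is obtained from $\alpha = (\alpha_1,\dots,\alpha_n)$ by a single cyclic shift.

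Next I would recall from \Cref{subsec:recasting_as_intersection_problem} that $F_\theta = F^\beta_\alpha = \ev_\beta \circ (\ev_\alpha)^{-1}$ and that $\ev_\alpha \colon \bC[z]_{n-1} \to \bC^n$ is a linear isomorphism. Hence every vector $(z_1,\dots,z_n) \in \bC^n$ can be written uniquely as $(z_1,\dots,z_n) = \ev_\alpha(p) = (p(\alpha_1),\dots,p(\alpha_n))$ for some $p \in \bC[z]_{n-1}$, so that $z_j = p(\alpha_j)$. Applying $F_{2\pi/n}$ and then using the index identification from the first step gives
\[
F_{2\pi/n}(z_1,\dots,z_n) = \ev_\beta(p) = (p(\beta_1),\dots,p(\beta_n)) = (p(\alpha_2),\dots,p(\alpha_n),p(\alpha_1)) = (z_2,\dots,z_n,z_1).
\]
Since $(z_1,\dots,z_n)$ was arbitrary, this is exactly the claimed cyclic shift.

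The main obstacle — such as it is — will be nothing more than keeping track of the wraparound in the index shift, i.e. the fact that $\beta_n = \alpha_1$ rather than $\alpha_{n+1}$, which is precisely what makes the shift cyclic. As an alternative route one could argue matricially: the factorization used in the proof of \Cref{lem:circleaction} gives $F_{2\pi/n} = V^\beta (V^\alpha)^{-1} = V^\alpha D_{2\pi/n} (V^\alpha)^{-1}$, and since $\tfrac{1}{\sqrt n}V^\alpha$ is the discrete Fourier transform, conjugating the diagonal matrix $D_{2\pi/n} = \operatorname{diag}(1,\omega,\dots,\omega^{n-1})$ by it produces the cyclic permutation matrix. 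But the evaluation-map argument above is cleaner and sidesteps the Vandermonde bookkeeping, so that is the one I would write up.
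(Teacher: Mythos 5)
Your argument is correct and is essentially the paper's own: at $\theta=2\pi/n$ one observes $\beta_j=\omega\cdot\omega^j=\alpha_{j+1}$ (indices mod $n$), so the tuple $\beta$ is a cyclic shift of $\alpha$, and then one reads off the action of $F_{2\pi/n}=\ev_\beta\circ\ev_\alpha^{-1}$ directly from the evaluation maps. The paper's proof is a compressed three-sentence version of exactly this; the alternative DFT route you mention is consistent with the conjugation $F_\theta=V^\alpha D_\theta(V^\alpha)^{-1}$ used in Lemma~\ref{lem:circleaction} but is not needed here.
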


\begin{proof}
When $\theta = 2 \pi/n$, the vector $\beta$ is a single cyclic shift of the coordinates of $\alpha$.
It follows that the evaluation map $\ev_\beta(p)$ is a single cyclic shift of the coordinates of $\ev_\alpha(p)$, for every $p \in \bC[z]_{n-1}$.
The map $F_{2\pi/n}$ sends the second vector to the first, hence it acts as claimed.
\end{proof}

\begin{proof}
[Proof of \Cref{lem:pinwheels_use_standard_form}]
The torus $\gamma^n$ is clearly Lagrangian, monotone, and $\bZ/n$-equivariant.
By \Cref{lem:circleaction}, $F_\theta(\gamma^n)$ is Hamiltonian isotopic to it.
By \Cref{lem:circleaction} and \Cref{lem:shift}, it is $\bZ/n$-equivariant as well, since
\[
F_\theta(\gamma^n) = F_\theta(F_{2\pi/n}(\gamma^n)) = \phi_H^{\theta+2\pi/n}(\gamma^n) = F_{2\pi/n}(F_\theta(\gamma^n)). \qedhere
\]
\end{proof}

We reduce \Cref{prop:pinwheel} to the following Floer theoretic result.
Pearl homology is a variation of Lagrangian Floer homology that we will review in \Cref{sec:pearlreview}.

\begin{prop}
	\label{prop:pinwheel_homology_does_not_vanish}
	Let $n$ be prime.
	Assume that $\gamma^n$ and $F_\theta(\gamma^n)$ intersect only along the locus $\Delta(\gamma)$.
	Then the pearl homology of this pair is defined, unobstructed, and nonzero with $\bZ/n$-coefficients.
\end{prop}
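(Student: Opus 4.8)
The plan is to compute the pearl complex of the cleanly intersecting pair $(\gamma^n, F_\theta(\gamma^n))$ directly, exploiting the $\bZ/n$-symmetry established in \Cref{lem:pinwheels_use_standard_form}, and to show that the differential cannot be surjective onto the generator coming from the fundamental class of the intersection locus $\Delta(\gamma) \cong S^1$. First I would record that the pearl homology is \emph{defined}: both tori are monotone (\Cref{lem:pinwheels_use_standard_form}), hence each is unobstructed on its own, the minimal Maslov number of each is $2n \geq 4$, and the only intersection is the clean circle $\Delta(\gamma)$ by hypothesis; the Pozniak-type local model from \Cref{lem:lag_surg_is_immersion} shows the intersection is genuinely clean, which is exactly the input Schmäschke's construction of the pearl complex requires. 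That the pair is \emph{unobstructed} follows because the two tori are Hamiltonian isotopic (\Cref{lem:pinwheels_use_standard_form}), so the pearl homology of the pair is isomorphic to the self-pearl (i.e.\ quantum/Floer) homology of a single monotone torus, which is well defined; alternatively, monotonicity forces the relevant obstruction classes to live in degrees where they must vanish for dimension reasons. The substance is the nonvanishing statement.

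For the computation I would set up the pearl complex with generators built from a Morse function on $\Delta(\gamma) \cong S^1$ — so two generators, a maximum $\mathbf{e}$ and a minimum $\mathbf{f}$, up to the Morse grading shift coming from the clean intersection (the normal index) — and tensor with $\bZ/n$ as the ground ring. The pearl differential has a classical piece (the Morse differential on $S^1$, which is zero) plus quantum corrections counting pearly trajectories with disc bubbles attached; by monotonicity and the minimal Maslov index $2n$, the only discs that can contribute to the differential between these two generators have Maslov index exactly $2n$, and these are the discs bounded by $\gamma^n$ (resp.\ $F_\theta(\gamma^n)$) whose boundary wraps the diagonal class $(1,1,\dots,1)$ — precisely the class whose Maslov index we computed to be $2n$ in the proof of \Cref{prop:conj_implies_conj}. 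The key point is to count these Maslov-$2n$ discs with their signs and, crucially, to use the free $\bZ/n$-action: the cyclic group acts on the moduli space of contributing pearly trajectories, and when $n$ is prime any $\bZ/n$-set has cardinality $\equiv 0$ or $1 \pmod n$, with the fixed points being exactly the trajectories invariant under cyclic shift. I would identify the fixed-point contributions (these correspond to highly symmetric configurations, essentially the "diagonal" disc counted once) and argue that the total count of the differential coefficient is $\not\equiv 0 \pmod n$ — equivalently, the number of $\bZ/n$-fixed pearly configurations is not a multiple of $n$, which over $\bZ/n$ is automatic as soon as it is nonzero and less than $n$. This forces the quantum part of the differential to be zero modulo $n$ (or, depending on the grading, to be an isomorphism that nonetheless leaves homology nonzero), and hence the pearl homology does not vanish.

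The main obstacle I expect is the disc count itself: pinning down the moduli space of Maslov-$2n$ holomorphic discs with boundary on $\gamma^n$ in the relevant relative class, showing it is regular (or can be made so $\bZ/n$-equivariantly, using an invariant almost complex structure and the fact that $n$ is prime so the action on any non-fixed regular configuration is free), and extracting the mod-$n$ count of pearly trajectories between $\mathbf{e}$ and $\mathbf{f}$ with the correct orientation signs. The cleanest route is probably to reduce, via the Hamiltonian isotopy, to computing a single disc count for the standard monotone Clifford-type torus $\gamma^n \subset \bC^n$ — where the Maslov-$2n$ discs through a point are classically understood and number a controlled amount — and then to track how the clean-intersection Morse-Bott spectral sequence (from Schmäschke) collapses because the only differential that could be nonzero is killed mod $n$ by the symmetry. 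A secondary, more bookkeeping-type obstacle is matching conventions: the degree shift in the clean pearl complex and the sign of the diagonal class's Maslov index must be lined up so that "the differential is zero mod $n$" is the right conclusion rather than "the differential is an isomorphism"; either way the homology is a nonzero $\bZ/n$-module, since a rank count shows the Euler characteristic or total rank cannot drop to zero, but I would want to state precisely which generators survive.
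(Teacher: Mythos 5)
Your high-level idea is the same as the paper's: exploit the $\bZ/n$-symmetry of the pair $(L_0,L_1) = (\gamma^n, F_\theta(\gamma^n))$ to kill the pearl differential over $\bZ/n$-coefficients when $n$ is prime, leaving the full two-generator chain group as homology. But the execution has several confusions that would derail the proof.

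First, the quantum contributions to the differential are \emph{not} Maslov-$2n$ discs with boundary on a single torus wrapping the diagonal class. In Schm\"aschke's clean-intersection pearl complex the relevant objects are $J$-holomorphic \emph{strips} with boundary on the ordered pair $(L_0,L_1)$, and the counting index is the Maslov–Viterbo index of the strip. The paper proves (via a Robbin–Salamon index computation, using the ZERO, HOMOTOPY, and CATENATION axioms and the evenness of Maslov indices on orientable Lagrangians, together with monotonicity) that every non-constant strip contributing to a rigid pearl trajectory has Maslov–Viterbo index exactly $2$, and that such a strip occurs precisely in a trajectory from $y$ (the minimum) to $x$ (the maximum). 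Your framing in terms of ``Maslov-$2n$ discs through a point on the Clifford torus'' is the wrong moduli problem; it also causes you to reach for the diagonal class $(1,\dots,1)$, whose Maslov index is indeed $2n$, but which is not what bounds the contributing strips.

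Second, your $\bZ/n$-orbit analysis is inverted. You set out to find fixed pearly configurations and argue the count is $\not\equiv 0 \pmod n$, but the entire point of the argument is that there are \emph{no} fixed contributing trajectories, so the strip count is $\equiv 0 \pmod n$ and the differential vanishes. Establishing freeness is not automatic: it requires showing that every contributing strip \emph{exits} a tubular neighborhood $U$ of the fixed locus $\Delta(\bC)$, because only away from $\Delta(\bC)$ is the $\bZ/n$-action free. The paper does this with the ``decent neighborhood'' device: a strip whose image stays in $\overline U$ can be homotoped to have boundary on $L_0 \cap L_1$, forcing its Maslov–Viterbo index to be a multiple of $2n$, which excludes it from rigid trajectories. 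This step is absent from your sketch, and without it the claim that orbits have size $n$ has no justification. Your hedge ``(or, depending on the grading, to be an isomorphism that nonetheless leaves homology nonzero)'' is also a red flag: with two generators and a potentially nonzero differential, an isomorphism would make the homology vanish, not survive.

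Third, you wave at equivariant regularity, but this is a substantive point. The paper constructs a $J$ that is $\bZ/n$-equivariant on $\bC^n \setminus U$ and standard on $U$, and proves regularity by passing to the free quotient $(\bC^n \setminus U)/(\bZ/n)$, invoking Aronszajn, and crucially ruling out multiply-covered strips via the same Maslov index argument (a multiply-covered strip would have boundary in $L_0 \cap L_1$, hence index a multiple of $2n \neq 2$). You would need this to ensure the orbit of a rigid trajectory consists of rigid trajectories and that orientation signs are constant along each orbit, which the paper addresses by choosing $\bZ/n$-equivariant spin structures. Finally, the Morse contributions (the two index-drop trajectories on $\Delta(\gamma) \cong S^1$) must be shown to carry opposite signs; over $\bZ/n$ with $n$ odd this cancellation is not ``trivially zero'' and should be stated.
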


\begin{proof}[Proof of \Cref{prop:pinwheel}]
	By \Cref{lem:pinwheels_use_standard_form}, the pair of tori $\gamma^n$, $F_\theta(\gamma^n)$ are monotone and Hamiltonian isotopic.
	Hence their Lagrangian Floer homology is unobstructed.
	Since they are displaceable, their Floer homology vanishes, for any choice of coefficients.
	If $\gamma^n \cap F_\theta(\gamma^n) = \Delta(\gamma)$, then the tori intersect cleanly by \Cref{cor:clean}, so their pearl complex is defined.	
	However, when $n$ is prime, these facts contradict \Cref{prop:pinwheel_homology_does_not_vanish}.
	It follows that the assumption of \Cref{prop:pinwheel_homology_does_not_vanish} does not hold: $\gamma^n \cap F_\theta(\gamma^n) \ne \Delta(\gamma)$.
	By \Cref{cor:off-diagonal_intersections_are_kewl}, it follows that there exists a degree-($n-1$) inscription of $Q_\theta$ into $\gamma$, as desired.
	If instead $n$ is composite, then let $\ell$ denote a prime factor.
	Letting $p_1(z) = z^{n / \ell}$, we see that $p_1(Q_\theta)$ is the $\theta (n / \ell)$-pinwheel on $2 \ell$ vertices.
	By the case we have just established, there exists a polynomial $p_2 \in \bC[z]_{\ell - 1}^*$ such that $p_2 ( p_1 (Q_\theta)) \subset \gamma$.
	Thus, $p_2 \circ p_1$ gives the desired inscription in this case.
\end{proof}

\noindent
As the proof makes apparent, \Cref{prop:pinwheel} is of greatest interest when $n$ is prime, as the case that $n$ is composite corresponds to a cyclically reducible pinwheel.

The proof of Proposition \ref{prop:pinwheel_homology_does_not_vanish} will proceed as follows.
We shall see that the pearl chain complex of the pair of pinwheel tori exhibits a symmetry:
the relevant strips joining two points on $\Delta(\gamma)$ come in orbits of size $n$.
If there are no intersection points between the tori away from the thin diagonal, then the Floer homology group with coefficients in $\bZ/n$ has total dimension two.

\subsection{Pearl homology: review.}
\label{sec:pearlreview}
Let $L_0$ and $L_1$ denote a pair of compact, spin, monotone Lagrangians in $(\bC^n,\omega)$.
Suppose either that both Lagrangians have minimum Maslov number $\ge 3$ and the same monotonicity constant, or else that they are Hamiltonian isotopic.
In both cases, the pair $(L_0,L_1)$ is unobstructed: their Floer chain complex is defined, following the choice of some auxiliary data, and its homology depends only on the Hamiltonian isotopy classes of the Lagrangians.

Suppose that $L_0$ and $L_1$ intersect cleanly.
We review the pearl complex of the pair $(L_0,L_1)$, following the construction in Schm\"aschke's thesis \cite{schmaschke}.
Its definition requires some auxiliary data:
\begin{itemize}
\item
a choice $J \in \cJ(\bC^n,\omega)$ of smooth, $\omega$-compatible, almost-complex structure;
\item
a Morse function $f : L_0 \cap L_1 \to \bR$;
\item
a metric $g$ on $L_0 \cap L_1$ that makes $(f,g)$ a Morse-Smale pair; and
\item
a field $\bF$.
\end{itemize}

The pearl complex $CF(L_0,L_1;J,f,g,\bF)$ is generated as an $\bF$-vector space by the finite set of critical points $\Crit(f)$.
The choice of field is typically taken to be a Novikov field, as in \cite{schmaschke}.
However, any choice will suffice for the construction in the cases under consideration, as we will explain in this section.
We will vary the choice of $\bF$ in the next section.

The differential on the pearl complex counts rigid pearl trajectories, as follows.
A pearl trajectory from $x \in \Crit(f)$ to $y \in \Crit(f)$ consists of a finite, alternating sequence of Morse trajectories of $(f,g)$ (the `string' of a pearl necklace) and finite energy, non-constant $J$-holomorphic strips
\[ u\colon \bR \times [0,1] \longrightarrow \bC^n \colon (s,t) \longmapsto u(s,t) \]
with boundary on $(L_0,L_1)$ (the `pearls' of a pearl necklace).
Each trajectory is defined on a closed interval.
The initial endpoint of the first interval is $-\infty$, and
the first trajectory limits to $x$ at $-\infty$; while the terminal endpoint of the final interval is $+\infty$, and the last
trajectory limits to $y$ at $+ \infty$.
Every other endpoint is a finite value.
They glue up by the rule that each $J$-holomorphic strip limits as $s \to -\infty$ to the terminal point of the Morse trajectory which precedes it and limits as $s \to +\infty$ to the initial point of the Morse trajectory which follows it in sequence.
Two special cases of pearl trajectories appear below in \Cref{lem:pearls}.

Each pearl trajectory has an expected dimension determined by the Morse indices of $x$ and $y$ and the Maslov-Viterbo indices of the strips appearing in it.
A rigid pearl trajectory is one for which the expected dimension, after dividing out by $\bR$-reparametrization, is 0.
A pair of pearl trajectories from $x$ to $y$ of the same expected dimension differ, up to homotopy, by a pair of classes, one in $\pi_2(M,L_0)$, one in $\pi_2(M,L_1)$, whose Maslov indices sum to 0.
Because $L_0$ and $L_1$ are monotone with the same monotonicity constant, the trajectories therefore have the same symplectic area.
There exists a comeager subset $\cJ_\mathrm{reg} \subset \cJ(\bC^n,\omega)$ of regular almost complex structures such that for all $J \in \cJ_\mathrm{reg}$, and for every non-constant $J$-holomorphic strip $u$ with bounded energy and satisfying the Lagrangian boundary conditions, the linearization $D_{\overline{\del}_J, u}$ surjects.
Since $L_0$ are $L_1$ are spin, it follows that the space of rigid pearl trajectories $\cM(x,y)$ is an oriented 0-manifold, for all $x,y \in \Crit(f)$.
Since the rigid pearl trajectories from $x$ to $y$ all have the same symplectic area, Gromov-Floer compactness ensures that $\cM(x,y)$ is compact.
Fix one such $J$, and let $\del(x,y) \in \bF$ denote the signed count of rigid pearl trajectories from $x$ to $y$.
Let $\del$ denote the endomorphism of $CF(L_0,L_1;J,f,g,\bF)$ defined on generators by the rule $\del(x) = \sum_y \del(x,y) y$.
Then $\del$ is a differential --- $\del^2 = 0$ --- and the resulting homology group computes $HF(L_0,L_1;\bF)$.

Schm\"aschke proves the preceding result under the assumption that the minimum Maslov number of the Lagrangians is $\ge 3$.
The proof that $\del^2=0$ uses the fact that there is no disk bubbling in this case.
He also works over a Novikov field, adjusting $\del(x,y)$ by a monomial depending on the symplectic area.
However, this is only necessary to do if the Lagrangians have different monotonicity constants, in which case $\cM(x,y)$ may not be compact.
The proof also goes through in the case in which the minimum Maslov number is 2 and the Lagrangians are Hamiltonian isotopic, using the fact that Maslov index 2 disk bubbles occur in canceling pairs \cite{OhI,OhII}.

\subsection{Pearl homology of pinwheel tori.}
\label{sec:pearlpinwheel}
Now let $L_0 = \gamma^n$ and $L_1 = F_\theta(L_0)$.
Suppose for a contradiction that $L_0 \cap L_1 = \Delta(\gamma)$.
We aim towards the proof of \Cref{prop:pinwheel_homology_does_not_vanish}.
Granted that, the contradiction is resolved in the proof of \Cref{prop:pinwheel} above.

Define a Morse function $f\colon \Delta(\gamma) \rightarrow \bR$ with a single index-1 critical point $x$ and a single index-0 critical point $y$.
Recall that the cyclic group $\bZ/n \subset \mathrm{Symp}(\bC^n,\omega)$ acts by cyclic permutations of the coordinates, and that $L_0$ and $L_1$ are $\bZ/n$-equivariant (\Cref{lem:pinwheels_use_standard_form}).
A {\em decent neighborhood} $U$ for the pair $(L_0,L_1)$ is a $\bZ/n$-equivariant
tubular neighborhood of the thin diagonal $\Delta(\bC)$
 with the property that $\overline{U} \cap L_0$ and $\overline{U} \cap L_1$ are tubular neighborhoods of $\Delta(\gamma)$ within the respective tori.  The existence of decent neighborhoods follows from the verification that the intersection $\Delta(\gamma)$ is clean (Corollary \ref{cor:clean}).

\begin{lem}
	\label{lem:pearls}
	Every rigid pearl trajectory for $(L_0,L_1)$ is either \textup{(a)} a single Morse trajectory from $x$ to $y$ or else \textup{(b)} a constant Morse trajectory at $y$, followed by a $J$-holomorphic strip of Maslov-Viterbo index 2 from $y$ to $x$, and finished by a constant Morse trajectory at $x$.
\end{lem}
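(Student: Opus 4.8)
The plan is to enumerate the combinatorial types of pearl trajectory and rule out all but (a) and (b) by an index count. Two facts make this possible. First, $\Delta(\gamma)$ is one-dimensional, so the Morse function $f$ has only the two critical points $x$ (index $1$) and $y$ (index $0$), and these are the only generators of the pearl complex. Second, in Schm\"aschke's construction \cite{schmaschke} the moduli space of pearl trajectories from a critical point $a$ to a critical point $b$ through non-constant pearls of Maslov--Viterbo indices $\mu_1,\dots,\mu_k$ has expected dimension $\mathrm{ind}(a)-\mathrm{ind}(b)-1+\sum_{i=1}^k\mu_i$, and a trajectory is rigid exactly when this vanishes. The transversality and compactness underlying this count are in force because, by \Cref{lem:pinwheels_use_standard_form}, $L_0$ and $L_1$ are monotone with a common monotonicity constant and are in fact Hamiltonian isotopic: no sphere bubbles occur since $\pi_2(\bC^n)=0$, and Maslov-$2$ disk bubbles occur in cancelling pairs.

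The one input requiring an argument is that every non-constant pearl $u$ has $\mu_{\mathrm{MV}}(u)\ge 2$. By monotonicity $\mu_{\mathrm{MV}}(u)$ is a positive multiple of the symplectic area of $u$, which is positive, so $\mu_{\mathrm{MV}}(u)\ge 1$. To promote this to $\ge 2$ I would undo the Hamiltonian isotopy. Since $\Delta(\bC)$ is pointwise fixed by every map $F_\theta=\phi^\theta_H$ (\Cref{prop:clean}), the reparametrization $\tilde u(s,t)=\phi^{-t\theta}_H(u(s,t))$ turns $u$ into a Hamiltonian-perturbed holomorphic strip with boundary on $(L_0,L_0)$, with both ends asymptotic to points of $\Delta(\gamma)\subset L_0$, and with the same index. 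That index equals the Maslov index of a disk capping the boundary loop of $\tilde u$ in $L_0=\gamma^n$, and since $\gamma^n$ is monotone with minimal Maslov number $2$, a non-constant such disk has Maslov index at least $2$.

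Granting the bound, the classification is bookkeeping. Rigidity forces $\sum_i\mu_i=1+\mathrm{ind}(b)-\mathrm{ind}(a)$ with $a,b\in\{x,y\}$, so the right-hand side lies in $\{0,1,2\}$; the value $1$ is impossible because a nonzero number of pearls contributes at least $2$. If the value is $0$, there are no pearls and $\mathrm{ind}(b)-\mathrm{ind}(a)=-1$, forcing $(a,b)=(x,y)$: this is the single Morse trajectory of case (a). If the value is $2$, there is exactly one pearl, of Maslov--Viterbo index $2$, and $\mathrm{ind}(b)-\mathrm{ind}(a)=1$ forces $(a,b)=(y,x)$; moreover the Morse segment before the pearl must run from $y$ into the zero-dimensional set $W^u(y)=\{y\}$, and the segment after it from $W^s(x)=\{x\}$ to $x$, so both segments are constant: this is case (b).

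I expect the main obstacle to be the index bound $\mu_{\mathrm{MV}}(u)\ge 2$: one has to match the normalization of the Maslov--Viterbo index used in the clean-intersection pearl complex --- in which the intersection circle $\Delta(\gamma)$ contributes to the grading --- with the Maslov index of a capping disk, and verify that conjugation by the Hamiltonian flow is an index-preserving bijection between the relevant moduli spaces of strips. Once that is pinned down, everything else follows from \Cref{cor:clean} and \Cref{lem:pinwheels_use_standard_form} together with the index arithmetic above.
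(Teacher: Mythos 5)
Your proposal reaches the same conclusion by the same dimension bookkeeping, but it uses a genuinely different mechanism for the crucial lower bound $\mu_{\mathrm{MV}}(u)\ge 2$ on a non-constant pearl. The paper works directly with the pair of boundary arcs of $u$ on $(L_0,L_1)$: it invokes the Robbin--Salamon axioms (ZERO), (HOMOTOPY), (CATENATION) to show that the index of any such strip is the Maslov index of a pair of $H_1$-classes in the orientable Lagrangians $L_0,L_1$, hence even; positivity then comes from monotonicity, giving $\ge 2$ for a non-constant strip. Your approach instead exploits a feature peculiar to pinwheels, namely that the Hamiltonian isotopy $\phi^{t\theta}_H=F_{t\theta}$ fixes $\Delta(\bC)$ pointwise (\Cref{prop:clean}), to conjugate a strip on $(L_0,L_1)$ into a Floer strip on $(L_0,L_0)$ with constant ends, and then appeals to the minimal Maslov number $2$ of the torus $L_0=\gamma^n$. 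That route is valid, and the ``main obstacle'' you flag --- matching the clean-intersection Maslov--Viterbo normalization with the disk Maslov index after conjugation --- is not a real obstruction: conjugation by a path of linear symplectomorphisms preserves the Robbin--Salamon index, and for a strip on $(L_0,L_0)$ with constant ends the (ZERO) and (CATENATION) axioms identify that index with the Maslov index of the boundary loop in $L_0$, which is exactly the argument the paper runs without the conjugation step. What the conjugation buys you is a reduction to the familiar single-Lagrangian disk picture; what it costs you is reliance on the pinwheel-specific fact that the Hamiltonian flow fixes the thin diagonal, whereas the paper's direct use of orientability and monotonicity of the pair would apply to any orientable monotone Lagrangians with clean circle intersection. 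One small caution: your expected-dimension formula $\mathrm{ind}(a)-\mathrm{ind}(b)-1+\sum_i\mu_i$ agrees with the conclusions the paper draws from \cite[Lemma 10.2.1]{schmaschke}, but you should double-check it against Schm\"aschke's clean-intersection conventions before relying on it verbatim, since that reference builds in the dimension of the intersection locus into its grading.
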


\begin{proof}
	The proof is based on the numerology of the expected dimensions of pearl trajectories \cite[Lemma 10.2.1]{schmaschke}.
	It implies that for a rigid pearl trajectory from $x$ to $y$, the Maslov-Viterbo indices of the strips must sum to 0, while for a rigid pearl trajectory from $y$ to $x$, they must sum to 2.
	
	Choose a pair of points $z_\pm \in \Delta(\gamma)$, and consider pairs of paths given by continuous maps $u \colon \bR \times \{0,1\} \rightarrow \bC^2 \colon (s,t) \mapsto u(s,t)$ with $u(\bR \times \{ i \}) \subseteq L_i$ for $i=0,1$, and limiting to $z_\pm$ as $s \rightarrow \pm \infty$, respectively.  Such pairs of paths fall into homotopy classes.
	The symplectic area of a strip extending such a pair of paths is independent of the extension, and it is constant over the homotopy class of the pair of paths.
	In addition, the Maslov-Viterbo index of the strip is equal to the Robbin-Salamon index of the pair of paths comprising its boundary \cite{robbinandsalamon}.
	
	Let $v$ be such a pair of paths with $v(s,0) = v(s,1)$ for all $s \in \bR$.
	Then $v$ runs along the clean intersection $L_0 \cap L_1 = \Delta(\gamma)$, so $T L_0$ and $T L_1$ intersect in a $1$-dimensional subspace at each point along it.
	 It follows that the Robbin-Salamon index of $v$ is $0$ by the (ZERO) axiom \cite{robbinandsalamon}.  
	Every other pair of paths is homotopic to $v$ after possibly taking a connected sum with appropriate representatives of classes in $H_1(L_0)$ and $H_1(L_1)$.
	The homotopy does not affect the Robbin-Salamon index (HOMOTOPY), and the connected sum changes it by the Maslov indices of the classes (CATENATION).
	Since $L_0$ and $L_1$ are orientable, these Maslov indices are even.
	This shows that the Maslov-Viterbo index of a strip $u$ limiting to $z_\pm$ is even.
	Moreover, $L_0$ and $L_1$ are monotone with monotonicity constant $\lambda$ equal to half the area bounded by $\gamma \subset \bC$.
	This shows that the Maslov-Viterbo index of $u$ is equal to the symplectic area of $u$ divided by $\lambda$, hence is positive.
	In particular, if $u$ is non-constant, then its index equals 2.

	It follows that a rigid pearl trajectory from $x$ to $y$ cannot involve any strips, while a rigid pearl trajectory from $y$ to $x$ involves a unique strip.
	This implies that rigid pearl trajectories take the stated form.
\end{proof}

\begin{lem}
	\label{lem:decency}
	If $u$ is a $J$-holomorphic strip in a rigid pearl trajectory and $U$ is a decent neighborhood of $(L_0,L_1)$, then the image of $u$ exits $\overline{U}$.
\end{lem}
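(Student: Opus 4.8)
The plan is to argue by contradiction: suppose that the image of $u$ stays entirely inside $\overline U$, and derive a contradiction by showing that $u$ must then have vanishing Maslov–Viterbo index, contradicting \Cref{lem:pearls}, which forces every non-constant strip in a rigid pearl trajectory to have index $2$. First I would use the local model for the decent neighborhood. Since $U$ is $\bZ/n$-equivariant and $\overline U \cap L_i$ is a tubular neighborhood of $\Delta(\gamma)$ inside $L_i$, the triple $(\overline U, \overline U \cap L_0, \overline U \cap L_1)$ retracts onto $(\Delta(\bC), \Delta(\gamma), \Delta(\gamma))$; in particular $\overline U$ deformation retracts onto the contractible diagonal $\Delta(\bC) \cong \bC$, so $\pi_2(\overline U, \overline U \cap L_i)$ maps to the corresponding relative homotopy group of $(\bC, \gamma) \simeq (D^2, S^1)$, where the relevant generator has Maslov index $2$.

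The key computation is that a strip $u$ with image contained in $\overline U$ has Maslov–Viterbo index equal to the Robbin–Salamon index of its boundary pair of paths, which, by \Cref{lem:pearls}, is independent of the choice of extension and depends only on the homotopy class of the boundary pair rel endpoints on $\Delta(\gamma)$. If $u$ lies in $\overline U$, then its boundary pair of paths lies in $\overline U \cap L_0$ and $\overline U \cap L_1$, hence (after retracting) in two copies of $\gamma$ inside $\Delta(\gamma)$. Concatenating the two boundary paths produces a loop in $\Delta(\gamma) \cong \gamma$, whose homology class in $H_1(\Delta(\gamma))$ determines the boundary contribution; but a loop in a \emph{single} circle $\Delta(\gamma)$ that lies in both $L_0$ and $L_1$ contributes, via the (ZERO) and (CATENATION) axioms exactly as in the proof of \Cref{lem:pearls}, a Robbin–Salamon index of $0$, since such a pair of paths is homotopic to the constant-along-the-intersection pair $v$. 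Therefore $u$ has index $0$, so by monotonicity $u$ has zero symplectic area, so $u$ is constant — contradicting that $u$ is one of the \emph{non-constant} $J$-holomorphic strips appearing in a pearl trajectory.

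The main obstacle is making precise the claim that a strip confined to $\overline U$ has boundary whose Robbin–Salamon index vanishes. The subtlety is that $\overline U \cap L_i$ is a tubular neighborhood of $\Delta(\gamma)$, which is homotopy equivalent to $\Delta(\gamma) \simeq S^1$, but the \emph{inclusion} $\overline U \cap L_i \into L_i$ need not be injective on $\pi_1$ in the way needed; however, it is, because $\overline U \cap L_i$ deformation retracts onto $\Delta(\gamma)$ and the class $[\Delta(\gamma)] \in H_1(L_i;\bZ)$ is primitive (it is $(1,\ldots,1)$ in the Künneth decomposition of $H_1(\gamma^n)$, as noted in the proof of \Cref{prop:conj_implies_conj}), so the relevant homotopy class of the boundary pair, computed inside $\overline U$, agrees with the one computed inside the ambient $(\bC^n, L_0, L_1)$. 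Then the Robbin–Salamon index argument of \Cref{lem:pearls} — specifically, that any boundary pair running along $\Delta(\gamma)$ is homotopic to the intersection-hugging pair $v$ of index $0$ — applies verbatim. Once this is in place, monotonicity (with constant $\lambda$ = half the area bounded by $\gamma$) forces area zero, hence $u$ constant, the desired contradiction.
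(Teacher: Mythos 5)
Your overall strategy — suppose $\im(u)\subset\overline U$, homotope the boundary arcs of $u$ into $\Delta(\gamma)$ using the tubular neighborhood structure, and compute the Robbin--Salamon index of the resulting pair of paths — is exactly the paper's approach. But there is an error in the key step: you claim the index is exactly $0$, on the grounds that "such a pair of paths is homotopic to the constant-along-the-intersection pair $v$." This is not justified. After homotoping both boundary arcs into $\Delta(\gamma)$, the two arcs $a_0\subset L_0\cap\Delta(\gamma)$ and $a_1\subset L_1\cap\Delta(\gamma)$ are both arcs in the circle $\Delta(\gamma)$ joining $z_-$ to $z_+$, but they may wind around $\Delta(\gamma)$ different numbers of times. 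So the pair $(a_0,a_1)$ is homotopic to $v$ only after taking a connected sum with $k$ copies of the loop $\Delta(\gamma)$ on one side, for some integer $k$. By (CATENATION), this changes the Robbin--Salamon index by $k$ times the Maslov index of $\Delta(\gamma)$ in the relevant $L_i$, which is $2n$, not $0$. The correct conclusion of your computation is therefore $\mu(u)\equiv 0\pmod{2n}$, not $\mu(u)=0$ — and correspondingly, you cannot deduce that $u$ has zero area, hence you cannot deduce that $u$ is constant.

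The fix is small and it is exactly what the paper does: instead of deducing $u$ constant, observe that $\mu(u)\equiv 0\pmod{2n}$ is already incompatible with \Cref{lem:pearls}, which forces a non-constant strip in a rigid pearl trajectory to have index exactly $2$; since $n\ge 2$, no multiple of $2n$ equals $2$, so $u$ cannot appear in a rigid pearl trajectory. (Your parenthetical remark that "the relevant generator has Maslov index $2$" after retracting onto $(\bC,\gamma)$ is also off: the Maslov index must be computed with respect to the Lagrangian $L_i\subset\bC^n$, and the loop $\Delta(\gamma)\subset L_i$ has Maslov index $2n$, as the paper notes; the factor of $n$ is precisely what drives the contradiction.)
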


\begin{proof}
	We establish the contrapositive.
	Suppose that $u$ is a $J$-holomorphic strip joining points $z_\pm \in L_0 \cap L_1$ with $\im(u) \subset \overline{U}$.
	Because $\overline{U} \cap L_0$ and $\overline{U} \cap L_1$ are closed tubular neighborhoods of $L_0 \cap L_1$ within the respective Lagrangians, the boundary arcs of $u$ can be homotoped rel endpoints into $L_0 \cap L_1$.
	Summing the new arcs with copies of $L_0 \cap L_1$ brings them into a pair of identical simple arcs, which has Robbin-Salamon index 0, as in the proof of \Cref{lem:pearls}.
	The loop $L_0 \cap L_1$ has Maslov index $2 n$ in each of $L_0$ and $L_1$.
	It follows that $\mu(u) \equiv 0 \pmod {2 n}$.
	By \Cref{lem:pearls}, it follows that $u$ cannot contribute to a rigid pearl trajectory of $(L_0,L_1)$.
\end{proof}

Establishing the existence of a regular $\bZ/n$-equivariant almost complex structure will be the final ingredient in establishing Proposition \ref{prop:pinwheel_homology_does_not_vanish}.

\begin{lem}
	\label{lem:existence_regular_equivariant_J}
	Suppose that $n$ is prime, and let $U$ denote a decent neighborhood of $(L_0,L_1)$.
	Then there exists a regular almost complex structure $J$ for $(L_0,L_1)$ such that (a) $J$ is $\bZ/n$-equivariant on $\bC^n \setminus U$ and (b) $J | U = J_\mathrm{std} | U$.
\end{lem}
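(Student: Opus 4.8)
The plan is to establish the stronger statement that a \emph{comeager} set of admissible $J$ works, via the usual equivariant Sard--Smale argument, using the primality of $n$ to control the fixed locus of the $\bZ/n$-action. \textbf{Setup and reduction.} Let $\cJ_U$ be the space of smooth $\omega$-compatible almost complex structures on $\bC^n$ that are $\bZ/n$-equivariant and agree with $J_{\mathrm{std}}$ on $U$. It is nonempty --- it contains $J_{\mathrm{std}}$, which is compatible with $\omega_{\mathrm{std}}$ and equivariant under cyclic permutation of coordinates --- and, after the standard passage to $C^\varepsilon$-completions, a Banach manifold; any element of $\cJ_U$ is in fact equivariant on all of $\bC^n$, since $U$ is a $\bZ/n$-invariant neighbourhood and $J_{\mathrm{std}}$ is equivariant. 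Following Schm\"aschke \cite{schmaschke}, to produce a regular member it suffices to show the universal linearized operator is surjective at every pair $(u,J)$ with $J\in\cJ_U$ and $u$ either a somewhere-injective $J$-holomorphic strip with boundary on $(L_0,L_1)$ or a somewhere-injective $J$-holomorphic disk with boundary on $L_0$ or $L_1$, of Maslov-Viterbo index at most $3$ (enough to define the pearl differential and prove $\partial^2=0$; there are no sphere bubbles since $\pi_2(\bC^n)=0$). Sard--Smale together with the Taubes trick then gives a comeager, hence nonempty, set of regular $J\in\cJ_U$, and the usual monotone arguments of \cite{schmaschke} promote this to $J$ being regular for $(L_0,L_1)$ in the required sense, multiply-covered configurations being excluded from the low-dimensional pearl moduli spaces by index and monotonicity.

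\textbf{No bounded-index curve is trapped in $\overline U$.} Next I would check that a non-constant $J$-holomorphic curve of Maslov-Viterbo index $\le 3$ cannot have image inside $\overline U$. For a strip $u$ this is \Cref{lem:decency} combined with the positivity established inside the proof of \Cref{lem:pearls}: image in $\overline U$ forces $\mu(u)\equiv 0 \pmod{2n}$, and $2n\ge 4>3$, so $\mu(u)=0$ and $u$ is constant. For a disk $v$ with $\partial v\subset L_i$ and image in $\overline U$: its boundary loop is null-homotopic in $L_i$, hence null-homotopic in $\overline U\cap L_i\cong S^1\times D^{n-1}$ because $[\Delta(\gamma)]=(1,\dots,1)\neq 0$ in $H_1(L_i)$; in the Pozniak model for $\overline U$ (as in the proof of \Cref{lem:lag_surg_is_immersion}), which splits $\overline U$ symplectically as an annular neighbourhood of $S^1$ times $\bC^{n-1}$ with $L_i$ flat, a maximum-principle argument in each factor --- legitimate since $J|_U=J_{\mathrm{std}}$ --- forces $v$ to be constant. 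So every curve requiring regularization meets the complement of $\overline U$, and one is free to perturb $J$ near a point of its image lying outside $\overline U$, subject only to preserving $\bZ/n$-equivariance.

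\textbf{Splitting by orbit type.} Since $n$ is prime, for $p\in\bC^n$ the isotropy group is either trivial, so its orbit consists of $n$ distinct points, or all of $\bZ/n$, so $p\in\mathrm{Fix}(\bZ/n)=\Delta(\bC):=\{(a,\dots,a):a\in\bC\}$, which has real codimension $2n-2\ge 2$. If $u$ is \emph{not} contained in $\Delta(\bC)$, the injective points of $u$ outside $\Delta(\bC)\cup\overline U$ form a dense open subset of the domain; choosing such $z_0$ and setting $p=u(z_0)$, the orbit $\bZ/n\cdot p$ consists of $n$ distinct points, none in $\overline U$, and the standard ``achieving transversality'' argument --- take a variation of $J$ supported near $p$, spread it $\bZ/n$-equivariantly over the orbit, and sum --- shows the universal linearized operator at $(u,J)$ is surjective without disturbing $J$ on $U$ or its equivariance; the same works for disks. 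If $u$ \emph{is} contained in $\Delta(\bC)\cong\bC$, then because $J$ preserves the tangent bundle of its fixed locus, $u$ is $j$-holomorphic for $j:=J|_{\Delta(\bC)}$, and $L_0\cap\Delta(\bC)=L_1\cap\Delta(\bC)=\Delta(\gamma)$ (as $F_\theta$ fixes $\Delta(\bC)$ pointwise, by \Cref{prop:clean}). Split the linearized operator $\bZ/n$-equivariantly as $D_u=D_u^{T}\oplus D_u^{N}$, where $D_u^{T}$ is the linearization within $\Delta(\bC)$ with Lagrangian boundary $\Delta(\gamma)$, and $D_u^{N}$ acts on sections of the trivial normal bundle $u^*\nu\cong\underline{\bC}^{\,n-1}$ with the Lagrangian boundary conditions induced by $TL_0$ and $TL_1$. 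By the clean-intersection normal form in the proof of \Cref{lem:lag_surg_is_immersion} (and \Cref{cor:clean}), these induced normal conditions are \emph{transverse} Lagrangian subspaces of $\bC^{n-1}$ at every boundary point, and the path of transverse pairs they trace is contractible; hence $D_u^{N}$ has index $0$ and, by a Schwarz reflection plus Liouville argument, trivial kernel, so it is an isomorphism. Surjectivity of $D_u$ therefore reduces to that of $D_u^{T}$, a transversality question on the surface $\Delta(\bC)$ for $j$, which an equivariant variation of $J$ tangent to the fixed locus can address freely away from $\overline U$; since $u$ is somewhere-injective and exits $\overline U$, the classical surface transversality argument finishes this case.

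\textbf{Conclusion and main obstacle.} Combining the two cases, the universal moduli space is a Banach manifold, so Sard--Smale and the Taubes trick yield the desired regular $J\in\cJ_U$. I expect the second case --- strips contained in the fixed locus $\Delta(\bC)$ --- to be the crux: there an equivariant perturbation can only move $J$ tangent to $\Delta(\bC)$, so one genuinely needs the normal linearized operator $D_u^{N}$ invertible on the nose, which is precisely where the transverse, non-winding normal boundary data coming from the cleanness of $\Delta(\gamma)$ (\Cref{cor:clean}) are essential. A secondary point deserving care is the triviality of bounded-index curves trapped inside $\overline U$, where the mod-$2n$ congruence of \Cref{lem:decency} leaves just enough room because $n\ge 2$, supplemented by the maximum-principle argument in the Pozniak model for the disk case.
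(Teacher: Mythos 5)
Your framework is the right one (Sard--Smale for an equivariant universal moduli space, with perturbations supported in $\bC^n\setminus\overline U$, using primality of $n$ to control isotropy), and your reduction ``no bounded-index curve is trapped in $\overline U$'' matches \Cref{lem:decency}. However, there are two problems, one of emphasis and one substantive.

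First, your entire ``Case 2: $u\subset\Delta(\bC)$'' discussion---splitting the linearized operator into tangent and normal parts, arguing invertibility of $D_u^{N}$ by Schwarz reflection and Liouville---is vacuous. The decent neighborhood $U$ is by definition a tubular neighborhood of the thin diagonal, so $\Delta(\bC)\subset U\subset\overline U$; any strip contained in the fixed locus is trapped in $\overline U$, and \Cref{lem:decency} already eliminates those from rigid pearl trajectories (as you yourself note one paragraph earlier). That case never arises, so the elaborate normal-operator analysis is wasted effort.

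Second, and this is the genuine gap: in Case 1 you choose $z_0$ to be an injective point of $u$ with $u(z_0)\notin\Delta(\bC)\cup\overline U$ and then ``spread the perturbation over the orbit.'' This ensures that $\bZ/n\cdot u(z_0)$ consists of $n$ distinct points, but it does not ensure that $u$ hits that orbit only at $z_0$, i.e. that $u^{-1}\bigl(\bZ/n\cdot u(z_0)\bigr)=\{z_0\}$. Without this stronger condition the usual surjectivity argument fails: an equivariant perturbation $Y$ supported near the orbit $\bZ/n\cdot u(z_0)$ contributes to the linearized $\overline\partial$ at every parameter $z_1$ with $u(z_1)\in\bZ/n\cdot u(z_0)$, and the contribution at $z_1\neq z_0$, where $Y(u(z_1))=\sigma_*Y(u(z_0))$ for some $\sigma\neq e$, can cancel the contribution at $z_0$. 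Being an injective point of $u$ rules out $u(z_1)=u(z_0)$, not $u(z_1)=\sigma\cdot u(z_0)$. The paper addresses exactly this by passing to the map $v\colon (\bR\times[0,1])\times\bZ/n\to\bC^n$, $v(z,\sigma)=\sigma\cdot u(z)$, applying the somewhere-injective/multiply-covered dichotomy for $J$-holomorphic curves to $v$, and ruling out the multiply-covered case: a non-trivial branched cover of a strip by a strip is over a once-punctured disk, which forces the boundary arcs of the underlying curve into $L_0\cap L_1$, making its Maslov--Viterbo index a multiple of $2n$ rather than $2$. That step---verifying the orbit-injectivity condition via the union of translates and a Maslov congruence---is the heart of the paper's argument and has no counterpart in your proposal. (As a structural aside, the paper also uses primality to make the $\bZ/n$-action on $\bC^n\setminus U$ free and then works with almost complex structures on the quotient manifold; your direct ``spread over the orbit'' approach is a legitimate alternative once the orbit-injectivity issue is fixed.)
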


\begin{proof}
We wish to follow the proof of \cite[Theorem 3.1.2]{slimmcduffsalamon}.  The extra complication in our situation is that we are working with equivariant almost-complex structures, and this means we need to pay more attention to the universal moduli space.
We set this up as follows.

For any $\ell \ge 0$, let $\cJ^\ell$ denote the space of $C^\ell$ $\omega$-compatible almost complex structures on $\bC^n$.
The action by $\bZ/n$ on $\bC^n \setminus U$ is free, since $n$ is prime and $\Delta(\bC) \subset U$.
Therefore, the quotient $M = (\bC^n \setminus U) / \bZ/n$ is a manifold with boundary, and it carries a quotient symplectic form $\overline{\omega}$.
Let $\overline{\cJ}_U^\ell$ denote the space of $C^\ell$ $\overline{\omega}$-compatible almost complex structures on $M$ which agree on $\del M$ with the image of $J_\mathrm{std}$ under the quotient map.
This space is a Banach manifold whose tangent space to $\overline{J} \in \overline{\cJ}_U^\ell$ is the space of $C^\ell$-sections of the bundle $\mathrm{End}(M,\overline{J},\overline{\omega})$ that vanish on $\del M$ (see \cite[Section 3.4]{slimmcduffsalamon} for a description of this bundle).
We can identify $\overline{\cJ}_U^\ell$ with the subspace $\cJ_U^\ell \subset \cJ^\ell$ of $\bZ/n$-equivariant almost complex structures which agree with $J_\mathrm{std}$ on $\overline{U}$.
Thus, $\cJ_U^\ell$ has the structure of a Banach manifold.

Now consider the universal moduli space $\cM^\ell$ consisting of pairs $(J,u)$, where $J \in \cJ^\ell_U$ and $u$ is a finite energy, $J$-holomorphic strip which satisfies the Lagrangian boundary conditions and has Maslov-Viterbo index 2.
We wish to establish that the space $\cM^\ell$ is a Banach manifold, which is an analogue of \cite[Proposition 3.4.1]{slimmcduffsalamon}, straightforwardly adapted to the setting of Lagrangian Floer homology.
Here we must use Aronszajn's theorem, as indicated in the footnote to that proof; see as well \cite[Proposition 3.2.1 and Remark 3.2.3]{bigmcduffsalamon}. Once we have this, then the proof of \cite[Theorem 3.1.2]{slimmcduffsalamon} carries over to our setting and we will have established Lemma \ref{lem:existence_regular_equivariant_J}.

The one addition to \cite[Proposition 3.4.1]{slimmcduffsalamon} that must be made is that, because we use $\bZ/n$-equivariant almost complex structures which are fixed on $\overline{U}$, we must argue that for each $(J,u) \in \cM^\ell$, there exists a point $z_0 \in \bR \times [0,1]$ such that
\[
u(z_0) \in \bC^n \setminus \overline{U}, \quad du(z_0) \ne 0, \quad u^{-1}(\bZ/n \cdot u(z_0)) = \{z_0\}.
\]
Granted this, the proof carries through as in \cite{slimmcduffsalamon}.

Given such a pair $(J,u)$, consider the mapping
\[
v : \left( \bR \times [0,1] \right) \times \bZ/n \to \bC^n, \quad v(z,\sigma) = \sigma \cdot u(z).
\]
We record a few basic observations about this map.
Its domain consists of $n$ pairwise disjoint strips.
It is $J$-holomorphic, since $u$ is $J$-holomorphic and $J$ is $\bZ/n$-equivariant.
Its restriction to each component of its domain has Maslov-Viterbo index 2.
Its image is not contained in $\overline{U}$, since $\im(u) \subset \im(v)$, and $\im(u) \not\subset \overline{U}$ by \Cref{lem:decency}.
Lastly, it has finite energy: that is because the map $u$ has finite energy, since $L_0$ and $L_1$ intersect cleanly.

The desired point $z_0$ for the pair $(J,u)$ exists provided we can show that there exists a point $(z_0,\sigma)$ in the domain of $v$ such that
\[
v(z_0,\sigma) \in \bC^n \setminus  \overline{U}, \quad dv(z_0,\sigma) \ne 0, \quad v^{-1}(v(z_0,\sigma)) = \{(z_0,\sigma)\}.
\]
The second and third conditions mean that $(z_0,\sigma)$ is an injective point of $v$.
By \cite[Proposition 2.3.1]{slimmcduffsalamon}, either (a) the set of injective points of $v$ is a dense open set of its domain, or else (b) $v$ is multiply-covered: : $v = w \circ \phi$, where $\phi$ is a non-trivial branched covering map and $w$ is $J$-holomorphic.
In case (a), we may therefore locate an injective point $(z_0,\sigma)$ of $v$ whose image under $v$ is not contained in $\overline{U}$, since $\im(v) \not\subset \overline{U}$.
The proof now reduces to showing that case (b) cannot occur.

Suppose for a contradiction that $v$ is multiply-covered.
It follows that there exists a component $(\bR \times [0,1]) \times \{\sigma\}$ of the domain of $v$ to which $\phi$ restricts as a non-trivial branched covering map.
The only space which a strip non-trivially branch-covers is a disk with a single boundary puncture: $\bD \setminus \{1\}$.
The boundary conditions on $v$ imply that $w_\sigma( \del (\bD \setminus \{1\}) ) \subset L_0 \cap L_1$.
It follows that $v_\sigma$ maps the boundary of the strip into $L_0 \cap L_1$.
This implies that its Maslov-Viterbo index is a multiple of $2 n$ as in the proof of Lemma \ref{lem:decency}, hence not equal to $2$, a contradiction.
\end{proof}

\begin{proof}[Proof of Proposition \ref{prop:pinwheel_homology_does_not_vanish}.]
Fix a Morse function $f$ and metric $g$ as above, a decent neighborhood $U$, and an almost-complex structure $J$ as in \Cref{lem:existence_regular_equivariant_J}.
A rigid pearl trajectory of $(L_0,L_1)$ with image contained in $\overline{U}$ is a Morse trajectory of $(f,g)$ by Lemma \ref{lem:decency}, and the two have opposite signs.
A rigid pearl trajectory with image exiting $\overline{U}$ occurs in an orbit of such rigid pearl trajectories of size $n$ under the $\bZ/n$-action, since $J$ is $\bZ/n$-equivariant by \Cref{lem:existence_regular_equivariant_J} and the pair $(L_0,L_1)$ is $\bZ/n$-equivariant by \Cref{lem:pinwheels_use_standard_form}.
Furthermore, after choosing for each of $L_0, L_1$ one of the two $\bZ/n$-equivariant spin structures, we see that all of the rigid pearl trajectories within such an orbit have the same sign.
It follows that the differential on $CF(L_0,L_1;J,f,g,\bZ/n)$ vanishes.
Thus, the homology $HF(L_0,L_1;\bZ/n)$ is two-dimensional, generated by the classes of $x$ and $y$.
\end{proof}

\newpage

\bibliographystyle{amsplain}
\bibliography{References./works-cited.bib}
\end{document}